
\documentclass[10pt]{amsart}
\usepackage{amssymb, enumitem}
\usepackage[all]{xy}
\usepackage{hyperref, aliascnt}
\usepackage{mathtools}
\usepackage[english]{babel}

\setcounter{MaxMatrixCols}{10}

\setcounter{tocdepth}{1}
\def\today{\number\day\space\ifcase\month\or   January\or February\or
   March\or April\or May\or June\or   July\or August\or September\or
   October\or November\or December\fi\   \number\year}

\theoremstyle{definition}
\newtheorem{lma}{Lemma}[section]

\newaliascnt{thmCt}{lma}
\newtheorem{thm}[thmCt]{Theorem}
\aliascntresetthe{thmCt}

\newaliascnt{corCt}{lma}
\newtheorem{cor}[corCt]{Corollary}
\aliascntresetthe{corCt}

\newaliascnt{propCt}{lma}
\newtheorem{prop}[propCt]{Proposition}
\aliascntresetthe{propCt}

\newtheorem*{thm*}{Theorem}
\newtheorem*{qst*}{Question}
\newtheorem*{cor*}{Corollary}
\newtheorem*{prop*}{Proposition}
\newcounter{theoremintro}
\newtheorem{thmintro}[theoremintro]{Theorem}

\newtheorem{cnjintro}[theoremintro]{Conjecture}
\newaliascnt{pgrCt}{lma}

\aliascntresetthe{pgrCt}

\newaliascnt{dfCt}{lma}
\newtheorem{df}[dfCt]{Definition}
\aliascntresetthe{dfCt}

\newaliascnt{remCt}{lma}
\newtheorem{rem}[remCt]{Remark}
\aliascntresetthe{remCt}

\newaliascnt{remsCt}{lma}

\aliascntresetthe{remsCt}

\newaliascnt{egCt}{lma}

\aliascntresetthe{egCt}

\newaliascnt{egsCt}{lma}

\aliascntresetthe{egsCt}

\newaliascnt{qstCt}{lma}

\aliascntresetthe{qstCt}

\newaliascnt{pbmCt}{lma}
\newtheorem{pbm}[pbmCt]{Problem}
\aliascntresetthe{pbmCt}

\newaliascnt{notaCt}{lma}
\newtheorem{nota}[notaCt]{Notation}
\aliascntresetthe{notaCt}

\newcommand{\beq}{\begin{equation}}
\newcommand{\eeq}{\end{equation}}
\newcommand{\beqa}{\begin{eqnarray*}}
\newcommand{\eeqa}{\end{eqnarray*}}
\newcommand{\bal}{\begin{align*}}
\newcommand{\eal}{\end{align*}}
\newcommand{\bi}{\begin{itemize}}
\newcommand{\ei}{\end{itemize}}
\newcommand{\be}{\begin{enumerate}}
\newcommand{\ee}{\end{enumerate}}

\pagenumbering{arabic}

\newcommand{\Ad}{{\mathrm{Ad}}}

\date{\today}

\title{Group amenability and actions on $\mathcal{Z}$-stable $C^*$-algebras}
\author[Eusebio Gardella]{Eusebio Gardella}
\address{Eusebio Gardella\\
Westf\"{a}lische Wilhelms-Universit\"{a}t M\"{u}nster, Fachbereich
Mathematik, Einsteinstrasse 62, 48149 M\"{u}nster, Germany}
\email{gardella@uni-muenster.de}
\urladdr{http://wwwmath.uni-muenster.de/u/gardella/}
\author[Martino Lupini]{Martino Lupini}
\address{Martino Lupini, Mathematics Department, California Institute of
Technology, 1200 East California Boulevard, Mail Code 253-37, Pasadena, CA
91125}
\email{lupini@caltech.edu}
\urladdr{http://www.lupini.org/}
\thanks{This work was initiated during a visit of the first named author
to the second at the California Institute of
Technology in January 2017, and was continued during a visit of both authors
to the Centre de Recerca Matem{\`{a}}tica in March 2017 in occasion of the
Intensive Research Programme on Operator Algebras. The authors gratefully
acknowledge the hospitality and the financial support of both institutions.
The first named author was partially funded by SFB 878
\emph{Groups, Geometry and Actions},
and by a postdoctoral fellowship from the Humboldt Foundation, and the second
named author was partially supported by the NSF Grant DMS-1600186.}
\dedicatory{}
\subjclass[2000]{Primary 46L55, 54H05; Secondary 03E15, 37A55}
\keywords{Amenable group, free group, Bernoulli shift, GNS construction, 
weak containment, cocycle equivalence, Jiang-Su algebra, hyperfinite II$_1$-factor}

\begin{document}

\begin{abstract}
We study strongly outer actions of discrete groups on C*-algebras in
relation to (non)amenability. 
In contrast to related results for amenable groups, where uniqueness of
strongly outer actions on the Jiang-Su algebra is expected, we show that
uniqueness fails for all nonamenable groups, and that the failure is
drastic. Our main result implies that if $G$ contains a copy of the free
group, then there exist uncountable many, non-cocycle conjugate strongly
outer actions of $G$ on any Jiang-Su stable tracial C*-algebra. Similar
conclusions apply for outer actions on McDuff tracial von Neumann algebras.
We moreover show that $G$ is amenable if and only if the Bernoulli shift on
a finite strongly self-absorbing C*-algebra absorbs the trivial action on
the Jiang-Su algebra. Our methods consist in a careful study of weak
containments of the Koopman representations of different Bernoulli-type
actions.
\end{abstract}

\maketitle


\renewcommand*{\thetheoremintro}{\Alph{theoremintro}}

\section*{Introduction}

Amenability for discrete groups was first introduced by von Neumann in the
context of the Banach-Tarski paradox. One of the main early results in the
theory, proved by Tarski, asserts that a group is amenable if and only if it
admits no paradoxical decompositions. The fact that the Banach-Tarski
paradox only makes use of free groups led Day to conjecture that a discrete
group is nonamenable if and only if it contains the free group $\mathbb{F}_2$
as a subgroup. This conjecture, known as \emph{the von Neumann problem}, was
open for about 40 years, until it was disproved by Ol'shanskii.

Amenability admits a surprisingly large number of equivalent formulations.
Here, we are concerned with those characterizations that are phrased in
terms of actions of the group. 
%
These usually come in the form of a dichotomy: roughly speaking, they assert
that there is an object in the relevant category, on which every amenable
group acts in an essentially unique way, while every nonamenable group
admits a continuum of non-equivalent actions. The following is an
illustrative example:

\begin{thm*}
Let $G$ be a discrete group, and let $(X,\mu)$ be a standard atomless
probability space.

\begin{enumerate}
\item If $G$ is amenable, then all free, measure preserving, ergodic actions
of $G$ on $(X,\mu)$ are orbit equivalent.

\item If $G$ is not amenable, then there exist uncountably many non-orbit
equivalent free, measure preserving, ergodic actions of $G$ on $(X,\mu)$.
\end{enumerate}
\end{thm*}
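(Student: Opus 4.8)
The plan is to treat the two implications by completely different machinery, since (1) is a uniqueness (rigidity-free) statement and (2) is an abundance (anti-classification) statement. Throughout I would pass from actions to their orbit equivalence relations, using that two free p.m.p. actions $G\curvearrowright(X,\mu)$ and $G\curvearrowright(Y,\nu)$ are orbit equivalent exactly when the orbit relations $\mathcal{R}_X$ and $\mathcal{R}_Y$ are isomorphic as measured equivalence relations. Under freeness, ergodicity and invariance of a probability measure, each $\mathcal{R}_X$ is an ergodic type II$_1$ relation.

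For part (1), the point is that amenability of $G$ forces $\mathcal{R}_X$ to be amenable in Zimmer's sense (a measurewise invariant mean exists), and the Connes--Feldman--Weiss theorem then gives that $\mathcal{R}_X$ is hyperfinite, i.e. an increasing union of finite sub-relations. I would finish with the uniqueness of the ergodic hyperfinite II$_1$ relation (Dye's theorem, in the Connes--Feldman--Weiss formulation): any two ergodic hyperfinite type II$_1$ relations are isomorphic. Since every $\mathcal{R}_X$ coming from a free ergodic p.m.p. action of an amenable $G$ is of this kind, they are mutually isomorphic, so all such actions are orbit equivalent.

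For part (2), the strategy is to exhibit a family $(\sigma_t)_{t\in\R}$ of free ergodic p.m.p.\ $G$-actions together with an orbit-equivalence invariant that takes uncountably many values on it. The first observation is which invariants \emph{cannot} succeed: by Gaboriau's results both cost and the $\ell^2$-Betti numbers are constant across all free p.m.p.\ actions of a fixed group, so they separate actions of different groups but never two actions of $G$ itself. The resolution is to use rigidity instead of a numerical invariant: one produces a family for which orbit equivalence can be \emph{upgraded to conjugacy}, and then arranges uncountably many pairwise non-conjugate members. For the rigid prototype of property (T) groups this is clean: taking malleable (Bernoulli or Gaussian) actions, Popa's cocycle superrigidity theorem shows any orbit equivalence between two of them is implemented by a group automorphism and a conjugacy of the base, so orbit equivalence implies virtual conjugacy, and a continuum of non-conjugate base data yields a continuum of non-orbit-equivalent actions (recovering the Hjorth phenomenon for such groups). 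For the free groups $\mathbb{F}_n$, where no such superrigidity is available, I would instead invoke the Gaboriau--Popa construction, which distinguishes a continuum of actions through a deformation/rigidity argument exploiting an embedded relatively rigid (relative property (T)) substructure. To reach an \emph{arbitrary} nonamenable $G$ --- in particular the Ol'shanskii-type groups with no free subgroup at all --- I would route through the measurable solution of the von Neumann problem of Gaboriau--Lyons: every nonamenable $G$ has a free p.m.p.\ action whose orbit relation contains the orbit relation of a free ergodic $\mathbb{F}_2$-action as a subrelation. Combining this ``measurable $\mathbb{F}_2$ inside $G$'' with Epstein's co-induction, one co-induces the Gaboriau--Popa continuum of $\mathbb{F}_2$-actions up to $G$ and checks that distinct members stay non-orbit-equivalent.

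The main obstacle lies entirely in part (2) and is twofold. First, one must locate an orbit-equivalence invariant that genuinely varies \emph{within} the fixed group $G$; because the natural candidates are constant, this forces the use of deformation/rigidity converting orbit equivalence into conjugacy, which is the technically heaviest ingredient. Second, the superrigidity machinery is only directly available for groups with extra structure (property (T) or a rigid subgroup), so the step that makes the statement hold for \emph{every} nonamenable group is the Gaboriau--Lyons embedding together with the co-induction argument, and the delicate part is verifying that the distinguishing property is not destroyed by co-induction.
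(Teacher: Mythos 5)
Your proposal is correct and takes essentially the same route as the paper, which does not prove this theorem but states it as known background with exactly the attributions you reconstruct: part (1) via Dye and Ornstein--Weiss (your Connes--Feldman--Weiss formulation is the modern packaging of the same argument), and part (2) via Gaboriau--Popa for $\mathbb{F}_2$, Ioana's co-induction for groups containing $\mathbb{F}_2$, and Epstein's extension to all nonamenable groups through the Gaboriau--Lyons measurable solution to the von Neumann problem. One small inaccuracy worth noting (harmless, since it occurs only in your aside about invariants you then discard): the assertion that cost is constant across all free p.m.p.\ actions of a fixed group is not a theorem of Gaboriau but the open fixed-price problem, whereas the corresponding statement for $\ell^2$-Betti numbers is indeed his theorem.
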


Part~(1) is a combination of classical results of Dye and Ornstein-Weiss. In
reference to (2), the first result in this direction is a theorem of
Connes-Weiss, asserting that every nonamenable group without property (T)
admits two such actions. Much more recently, Ioana proved part~(2) for
groups containing a copy of $\mathbb{F}_2$ (\cite{Ioa_orbit_2011}), using
the corresponding result for $\mathbb{F}_2$ due to Gaboriau-Popa (\cite%
{GabPop_uncountable_2005}), and finally Epstein extended the result to all
nonamenable groups (\cite{Eps_orbit_2007}). In recent work (\cite%
{GarLup_borel_2017}), the authors improved the conclusion in part~(2) above:
the relation of orbit equivalence of actions of nonamenable groups is not
Borel.

In the context of von Neumann algebras, and specifically for the hyperfinite
II$_1$-factor $\mathcal{R}$, amenability can also be characterized in terms
of actions:

\begin{thm*}
Let $G$ be a discrete group, and let $\mathcal{R}$ be the hyperfinite II$_1$%
-factor.

\begin{enumerate}
\item If $G$ is amenable, then all outer actions of $G$ on $\mathcal{R}$ are
cocycle conjugate.

\item If $G$ is not amenable, then there exist uncountably many non-cocycle
conjugate outer actions of $G$ on $\mathcal{R}$.
\end{enumerate}
\end{thm*}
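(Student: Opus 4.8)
The plan is to treat the two parts separately, since they draw on entirely different circles of ideas: part~(1) is the classification of amenable actions, while part~(2) is a flexibility statement detected by representation-theoretic invariants.

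For part~(1) I would invoke the classification of outer actions of countable discrete amenable groups on the hyperfinite $\mathrm{II}_1$-factor, which asserts that any two such actions are cocycle conjugate. The architecture of a proof is as follows. First one fixes a \emph{model action}, e.g.\ the shift of $G$ on the infinite tensor product $\mathcal{R}\cong\bigotimes_{g\in G}(M,\tau)$, and verifies it is outer. The heart of the matter is a Rokhlin-type lemma for outer actions of amenable $G$ on $\mathcal{R}$: using a F\o lner exhaustion of $G$ one produces, inside the tracial ultrapower, approximately $G$-equivariant partitions of unity, i.e.\ Rokhlin towers indexed by F\o lner sets. From this one deduces approximate vanishing of cohomology, namely that any approximate $1$-cocycle for the action is an approximate coboundary; here amenability is essential, since the relevant averages converge precisely because F\o lner sets are almost invariant. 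An Evans--Kishimoto style back-and-forth intertwining argument then shows that every outer action absorbs the model action up to cocycle conjugacy, and since the model is unique this forces all outer actions to be mutually cocycle conjugate.

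For part~(2) I would exhibit a continuum of pairwise non-cocycle-conjugate outer actions by means of Bernoulli-type (or Gaussian/free Bogoljubov) constructions, distinguished through the Koopman representation. Given a unitary representation $\pi$ of $G$, one builds an associated trace-preserving action on $\mathcal{R}$ whose Koopman representation on $L^2(\mathcal{R},\tau)\ominus\mathbb{C}1$ is governed by the Fock-space/tensor data of $\pi$. A cocycle conjugacy between two such actions forces a strong relation (quasi-equivalence, or two-sided weak containment) between the corresponding Koopman representations. The point is that nonamenability gives room: because the regular representation $\lambda_G$ of a nonamenable $G$ does \emph{not} weakly contain the trivial representation, these Koopman representations do not collapse to one another, and by letting $\pi=\pi_t$ vary in a one-parameter family whose spectral data change with $t$, one obtains uncountably many actions that are pairwise not cocycle conjugate. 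I would first carry this out for a free subgroup $\mathbb{F}_2\le G$, where the combinatorics of the Koopman representation is most transparent, and then promote the examples to all of $G$ by an induction/co-induction procedure, paralleling the passage from Gaboriau--Popa to Epstein in the orbit-equivalence dichotomy stated above.

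The main obstacles are twofold. In part~(1) it is the Rokhlin lemma together with the cohomology-vanishing step: this is exactly where amenability is indispensable, and it is the technical core of the classification. In part~(2) the delicate point is to make the chosen invariant genuinely a \emph{cocycle}-conjugacy invariant rather than merely a conjugacy invariant, and to ensure that it takes uncountably many values along the family $\pi_t$ while remaining sensitive to the nonamenability of $G$; extending the $\mathbb{F}_2$-examples to an arbitrary nonamenable $G$ (the analogue of Epstein's step) is the second genuinely nontrivial point, since co-induction must be shown to preserve both outerness and the distinguishing invariant.
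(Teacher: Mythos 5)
Your part~(1) matches the paper exactly: this is Ocneanu's theorem, which the paper only cites, and your outline (Rokhlin towers over F\o lner sets, vanishing of approximate cohomology, Evans--Kishimoto intertwining) is the standard architecture of its proof. For part~(2) restricted to groups \emph{containing} $\mathbb{F}_2$, your plan is also in the spirit of the paper's own contribution (Theorem~\ref{thm:D}, proved via \autoref{thm:UnctblyActs} and \autoref{cor:UnctblyActsMcDuff}): Bernoulli-type actions distinguished through Koopman representations. But note how the paper handles the two points you leave open. The ``delicate point'' you flag---that the Koopman representation is \emph{not} a cocycle conjugacy invariant in general (the paper gives an explicit $\mathbb{Z}_2$-on-$M_2$ counterexample)---is resolved by the $\sigma$-Rokhlin property: the invariant is the family of representations $u_\sigma$ weakly contained, in Zimmer's sense, in the reduced Koopman representation, and this is precisely where traces are needed (\autoref{prop:sRpWkCtmnt} and \autoref{prop:sRpCocConj}). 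And uncountability comes not from a one-parameter family $\pi_t$ but from the $2^{\aleph_0}$ subsets $P$ of an infinite set of primes, using generalized Bernoulli shifts over the coset spaces $G/H_p$ for a separated family $\{H_p\}$ of finite-index subgroups of $\mathbb{F}_\infty\leq \mathbb{F}_2$ (\autoref{df:separated}, \autoref{lma:propF}, \autoref{thm:MainTool}); condition (S.2) is what forces distinct $P$ to give non-weakly-equivalent invariants.

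The genuine gap is your final step: promoting the $\mathbb{F}_2$-examples to an \emph{arbitrary} nonamenable $G$ by an induction/co-induction procedure ``paralleling Epstein.'' Co-induction requires a copy of $\mathbb{F}_2$ \emph{inside} $G$, and by Ol'shanskii's negative solution of the von Neumann problem (recalled in the paper's introduction) a nonamenable group need not contain one, so for such $G$ there is nothing to co-induce from. Epstein's measurable argument circumvents this only by invoking Gaboriau--Lyons' theorem, which produces a free $\mathbb{F}_2$-action inside the \emph{orbit equivalence relation} of a suitable action of $G$; the von Neumann-algebraic analog of that theorem is exactly the open problem posed at the end of the paper, not something one can appeal to. This is why the paper's methods establish part~(2) only for groups containing $\mathbb{F}_2$ (more generally, an infinitely separated nonamenable subgroup), while the full statement for all nonamenable groups rests on Brothier--Vaes' theorem, whose proof uses Popa's deformation/rigidity and spectral gap machinery that both the paper and your proposal deliberately avoid.
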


Part~(1) is due to Ocneanu (\cite{Ocn_actions_1985}), although particular
cases were proved by Connes for cyclic groups (\cite{Con_periodic_1976}),
and by Jones for finite groups (\cite{Jon_finite_1980}). Part~(2) is a
recent result due to Brothier-Vaes (Theorem~B in~\cite{BroVae_families_2015}%
), generalizing previous results of Popa (\cite{Pop_some_2006}) and Jones (%
\cite{Jon_converse_1983}).

In both theorems recalled above, the amenable case was resolved relatively
early. On the other hand, the nonamenable case took much longer, and it
required the invention of new and powerful tools such as Popa's celebrated
deformation/rigidity theory. Indeed, it was realized that certain
nonamenable groups (or certain nonamenable II$_1$-factors) exhibit striking
rigidity phenomena, which is best seen in the presence of property (T). The
richness of the nonamenable world drove researchers in both Ergodic Theory
and in von Neumann algebras to study actions of nonamenable groups on the
standard atomless probability space as well as on $\mathcal{R}$,
particularly in what relates to the complexity of their classification.

This work revolves around analogs of the above results in the context of
C*-algebras, the central theme being the case of nonamenable groups.
Strongly self-absorbing C*-algebras can be seen as the C*-analog of the
hyperfinite II$_1$-factor. (Recall that a unital C*-algebra ${\mathcal{D}}$
is said to be \emph{strongly self-absorbing} if it is infinite dimensional
and there is an isomorphism $\varphi\colon {\mathcal{D}}\to {\mathcal{D}}%
\otimes_{\mathrm{min}}{\mathcal{D}}$ which is approximately unitarily
equivalent to the first tensor factor embedding; see \cite%
{TonWin_strongly_2007}.) Examples of such algebras are the UHF-algebras of
infinite type, the Jiang-Su algebra $\mathcal{Z}$, the Cuntz algebras $%
\mathcal{O}_2$ and $\mathcal{O}_\infty$, and their tensor products. (It is
believed that this list is complete.) By a result of Winter (\cite%
{Win_strongly_2011}), any strongly self-absorbing C*-algebra absorbs $%
\mathcal{Z}$ tensorially.

C*-analogs of part~(1) in the theorem above were explored in the early 1990s
by Bratteli, Evans and Kishimoto (\cite{BraEvaKis_rohlin_1995}), who studied
a concrete family of outer actions of ${\mathbb{Z}}$ on a specific
UHF-algebra. Their results show that outerness in (finite) C*-algebras is
too weak a condition for an analog of Ocneanu's result to hold. They also
provided evidence for the fact that a uniqueness result may hold if one
assumes outerness not only for the action, but also for its extension to the
weak closure in the GNS representation. This notion is now called \emph{%
strong outerness}.

Motivated by a recent breakthrough of Szab\'{o} \cite{Sza_stronglyIII_2016},
the following conjecture has been proposed in \cite{GarLup_cocycle_2016},
the first part of which had already been suggested in \cite%
{Sza_stronglyIII_2016}. We refer the reader to the introductions of \cite%
{Sza_stronglyIII_2016} and \cite{GarLup_cocycle_2016} for motivation and
relevant references (in particular, for the reason why torsion groups must
be excluded).

\begin{cnjintro}
\label{cnjintro} Let $G$ be a torsion-free countable group, and let $%
\mathcal{D}$ be a strongly self-absorbing C*-algebra.

\begin{enumerate}
\item If $G$ is amenable, then any two strongly outer actions of $G$ on $%
\mathcal{D}$ are cocycle conjugate.

\item If $G$ is not amenable, then there exist uncountably many non-cocycle
conjugate strongly outer actions of $G$ on $\mathcal{D}$.
\end{enumerate}
\end{cnjintro}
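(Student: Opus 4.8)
The plan is to treat the two parts of Conjecture~\ref{cnjintro} by completely different mechanisms, reflecting the fact that amenability is the precise dividing line between them. For part~(1), the strategy is a classification-up-to-cocycle-conjugacy argument in the spirit of the Elliott intertwining, carried out equivariantly. Concretely, I would fix a model action $\gamma\colon G\to\Aut(\D)$ that is itself equivariantly strongly self-absorbing (for instance a suitable infinite-tensor-power action of $G$ on $\D$), and first show that every strongly outer action $\alpha\colon G\to\Aut(\D)$ of a torsion-free amenable $G$ \emph{absorbs} $\gamma$, i.e.\ that $\alpha$ is cocycle conjugate to $\alpha\otimes\gamma$. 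Once every strongly outer action is $\gamma$-absorbing, uniqueness reduces to a two-sided intertwining between any two such actions $\alpha,\beta$: one produces approximately $\gamma$-equivariant unitaries implementing an approximate conjugacy and promotes this to a genuine cocycle conjugacy by an Evans--Kishimoto style back-and-forth. The absorption step is exactly where amenability must enter, through a Rokhlin-type property: one needs an abundance of almost-central, almost-$G$-invariant order-zero maps into the central sequence algebra $\D_\omega\cap\D'$, and the existence of such maps for arbitrary amenable $G$ is the substance of an equivariant McDuff/$\D$-stability theorem.

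The main obstacle in part~(1) is precisely this equivariant absorption theorem in full generality. For $G=\Z$ or $\Z^n$, or more generally for groups of finite Rokhlin dimension or admitting good Følner tilings, the required almost-invariant Rokhlin towers can be constructed directly; for an arbitrary torsion-free amenable $G$ one must instead extract them abstractly, using strong outerness to pass through the GNS construction to the weak closure, where the $G$-action on the hyperfinite factor $\mathcal{R}$ is outer and Ocneanu's theorem supplies the measured analog of the towers. Transferring this von Neumann algebraic input back to the $C^*$-level, uniformly over all amenable $G$, is the crux, and is the step on which the conjecture genuinely rests.

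Part~(2) is of an entirely different and more tractable character, and here I would construct the continuum of inequivalent actions explicitly as Bernoulli-type shifts. For a finite strongly self-absorbing $\D$, I form shifts of the form $\bigotimes_G B$ with $G$ permuting the tensor factors, and by twisting the building block $B$ through a continuum of mutually inequivalent data obtain a family $\{\alpha_t\}_t$ of strongly outer actions on $\D$. The invariant distinguishing them up to cocycle conjugacy is the weak-containment class of the Koopman unitary representation $\kappa_{\alpha_t}$ of $G$ on the GNS space of the canonical trace: cocycle conjugate actions have weakly equivalent Koopman representations, so it suffices to arrange that the $\kappa_{\alpha_t}$ realize uncountably many distinct weak-equivalence classes. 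For $G\supseteq\F_2$ this is where nonamenability does the work, since by a Kesten/Bekka--Valette type analysis the Koopman representation of a Bernoulli shift weakly contains the trivial representation if and only if $G$ is amenable, and one modulates the weakly contained spectrum continuously in $t$, using the free subgroup to import Gaboriau--Popa/Ioana-type rigidity.

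The main obstacle in part~(2) is uniformity over \emph{all} nonamenable groups rather than only those containing a free subgroup. For a general nonamenable $G$ with no free subgroup one cannot appeal to $\F_2$ directly; instead one must co-induce the examples from a nonamenable subquotient, mimicking Epstein's passage in the orbit-equivalence setting, and check that co-induction preserves both strong outerness and the non-cocycle-conjugacy detected by the Koopman invariant. Making co-induction interact correctly with $\D$-stability and with the trace-GNS Koopman representation is the technical heart of extending the free-group case to the stated full generality.
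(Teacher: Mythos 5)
The statement you are asked about is stated in the paper as a \emph{conjecture}, and the paper does not prove it: it establishes only partial results, namely uniqueness for certain amenable cases quoted from \cite{Sza_stronglyIII_2016} and \cite{GarHir_strongly_2017}, a two-action version of part~(2) for every nonamenable group (\autoref{thm:TwoActs}, \autoref{thm:CharactAmenBerShift}), and the full part~(2) for groups containing $\mathbb{F}_2$ (\autoref{thm:UnctblyActs} via \autoref{lma:propF}). Your text is accordingly a research plan rather than a proof, and to your credit it identifies the same two genuinely open steps the paper does: the equivariant absorption/McDuff theorem for arbitrary torsion-free amenable groups in part~(1), and the absence of a noncommutative Gaboriau--Lyons theorem for general nonamenable groups in part~(2) --- the latter is precisely the Problem with which the paper closes. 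For groups containing $\mathbb{F}_2$, your outline (Bernoulli-type shifts distinguished by weak containment data of Koopman representations on the trace-GNS space) is broadly parallel to the paper's actual route, though the paper varies the \emph{index set} of the shift --- disjoint unions of copies of $G$ and of coset spaces $G/H_p$ for a separated family $\{H_p\}$ of finite-index subgroups of $\mathbb{F}_\infty$ (\autoref{df:separated}) --- rather than ``twisting the building block,'' which by itself would not obviously alter the Koopman class.

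There is, however, one concrete false step in your plan: the assertion that ``cocycle conjugate actions have weakly equivalent Koopman representations.'' This is not true, and the paper itself records a counterexample just before \autoref{prop:sRpCocConj}: the trivial action and the inner order-two action of $\mathbb{Z}_2$ on $M_2$ are cocycle conjugate, yet with respect to the unique trace their Koopman representations are $\bigoplus_{j=1}^4 1_{\mathbb{Z}_2}$ and $\bigoplus_{j=1}^2 \lambda_{\mathbb{Z}_2}$, which are not weakly equivalent. So the Koopman weak-equivalence class is \emph{not} a cocycle-conjugacy invariant, and your distinguishing mechanism collapses as stated. The paper repairs exactly this point via the $\sigma$-Rokhlin property (\autoref{df:sRp}): the witnessing projections transfer under cocycle conjugacy (\autoref{prop:sRpCocConj}, where tracality of the state is essential to see that $\tau\circ\theta^{-1}$ is invariant), yielding the lower bound $u_{\sigma}\prec_{\mathrm{Z}}\kappa^{(0)}(\beta)$ for \emph{any} action $\beta$ cocycle conjugate to the generalized shift; this is then played against the upper bound $\kappa^{(0)}_{\sigma^G_P}|_F\subseteq u_{\sigma^G_P}|_F$ (\autoref{thm:sPWkCtmnt}), and the resulting weak equivalences of the permutation representations are separated using condition (S.2) on kernels of the quasiregular representations in $C^*(\mathbb{F}_\infty)$, via \autoref{prop:IrredRepWkCont} and nonamenability of $F$ (\autoref{thm:MainTool}). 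Any completed write-up along your lines would need this extra layer --- or some substitute invariant --- and would still leave both parts of the conjecture open in the generality stated.
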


The main result of \cite{Sza_stronglyIII_2016} asserts that part~(1) holds
when ${\mathcal{D}}$ is either a UHF-algebra of the Jiang-Su algebra and $G$
is abelian, while \cite{GarLup_cocycle_2016} asserts that part~(2) holds
when ${\mathcal{D}}$ is a UHF-algebra and for groups containing a subgroup
with relative property (T).

In this work, we continue the study of strongly outer actions on
C*-algebras, and specifically on strongly self-absorbing C*-algebras. In
particular, we are interested in constructing many non-cocycle conjugate
actions for nonamenable groups. We focus on a specific and very rich class
of actions, which we call \emph{generalized (noncommutative) Bernoulli shifts%
}. These are constructed as follows: given a strongly self-absorbing
C*-algebra $\mathcal{D}$ and an action $G\curvearrowright^\sigma X$ of a
discrete group $G$ on a countable set $X$, we consider the action of $G$ on $%
\otimes_{x\in X}\mathcal{D}\cong\mathcal{D}$ given by permuting the tensor
factors according to $\sigma$.

For an arbitrary group $G$, it seems difficult to produce actions of this
form other than the usual Bernoulli shift $G\curvearrowright \otimes_{g\in G}%
\mathcal{D}$ and the trivial action of $G$ on $\mathcal{D}$. However,
considering these actions leads to a new characterization of amenability:

\begin{thmintro}
\label{thm:B} Let $G$ be a countable discrete group, and let $\mathcal{D}$
be a finite strongly self-absorbing C*-algebra. Then $G$ is amenable if and
only if the Bernoulli shift $G\curvearrowright \bigotimes\limits_{g\in G}%
\mathcal{D}$ absorbs tensorially (up to cocycle conjugacy) the trivial
action $\mathrm{id}_{\mathcal{Z}}$ on $\mathcal{Z}$.
\end{thmintro}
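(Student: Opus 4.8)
The plan is to reduce tensorial absorption of $\mathrm{id}_{\mathcal{Z}}$ to a statement about the \emph{equivariant central sequence algebra}, and then to detect that statement spectrally through the Koopman representation, where amenability enters via Hulanicki's theorem. Write $\beta$ for the Bernoulli shift $G\curvearrowright M:=\bigotimes_{g\in G}\mathcal{D}$, let $\tau$ be the product of the unique traces, and let $\kappa_\beta$ be the Koopman representation on $L^2(M,\tau)$, with $\kappa_\beta^0$ its restriction to the orthogonal complement of the $G$-fixed vectors. Decomposing $L^2(\mathcal{D},\tau)=\mathbb{C}\hat 1\oplus H_0$ with $H_0\neq 0$, and expanding $L^2(M,\tau)=\bigoplus_{F}\bigotimes_{g\in F}H_0$ over finite $F\subseteq G$, the shift permutes the summands according to its action on finite subsets of $G$. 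Since $G$ acts freely on itself, the singleton summands already carry $\lambda_G\otimes 1_{H_0}$, hence contain $\lambda_G$, while every other summand is induced from a finite stabiliser and so is weakly contained in $\lambda_G$. Thus $\kappa_\beta^0$ is weakly equivalent to $\lambda_G$, and Hulanicki's theorem yields $1_G\prec\kappa_\beta^0$ if and only if $G$ is amenable.

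For the direction assuming amenability, I would use a F\o lner sequence $(F_n)$ to build a unital equivariant embedding of $(\mathcal{Z},\mathrm{id}_{\mathcal{Z}})$ into the equivariant central sequence algebra $F_\omega(\beta):=(M^\omega\cap M')^{\beta_\omega}$. Fixing a unital embedding realising $\mathcal{Z}$ as a \emph{uniformly spread}, and hence approximately $F_n$-permutation invariant, copy inside $\bigotimes_{g\in F_n}\mathcal{D}\subseteq M$, the F\o lner condition $|gF_n\triangle F_n|/|F_n|\to 0$ forces the shifted copies to agree asymptotically in $\|\cdot\|_2$, while strong self-absorption of $\mathcal{D}$ makes the sequence asymptotically central. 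Passing to the limit along $\omega$ produces the required embedding, whence $\beta\cong_{\mathrm{cc}}\beta\otimes\mathrm{id}_{\mathcal{Z}}$ by the equivariant $\mathcal{Z}$-absorption theorem of Szab\'o.

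For the converse, suppose $\beta\cong_{\mathrm{cc}}\beta\otimes\mathrm{id}_{\mathcal{Z}}$. The crucial observation is that a cocycle perturbation $\mathrm{Ad}(v_g)\circ\beta_g$ induces the \emph{same} automorphism as $\beta_g$ on the central sequence algebra, since each $v_g$ commutes asymptotically with every central sequence; hence cocycle conjugacy gives an isomorphism $F_\omega(\beta)\cong F_\omega(\beta\otimes\mathrm{id}_{\mathcal{Z}})$. The right-hand side contains the central sequences of the fixed tensor factor $\mathcal{Z}$, on which $G$ acts trivially, and is therefore non-trivial; consequently $F_\omega(\beta)\neq\mathbb{C}$. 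Thus $\beta$ admits a non-trivial asymptotically $G$-invariant central sequence, so it is not strongly ergodic, and by the equivalence between strong ergodicity and spectral gap for Bernoulli actions this means $1_G\prec\kappa_\beta^0$. By the first paragraph, $G$ is amenable. (The same scheme, run at the level of $M''$, gives the stated von Neumann analogue.)

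I expect the main obstacle to be the amenable direction, specifically the construction of an \emph{equivariant} embedding into $F_\omega(\beta)$: the symmetrisation producing approximately permutation-invariant copies of $\mathcal{Z}$ and the F\o lner estimate controlling $\|\beta_g\phi_n(z)-\phi_n(z)\|_2$ must be made simultaneously compatible with asymptotic centrality, and this is exactly where strong self-absorption of $\mathcal{D}$ and the approximate-unitary-equivalence properties of $\mathcal{Z}$ do the real work. On the spectral side, the only delicate point is the passage between the algebraic statement $F_\omega(\beta)=\mathbb{C}$ and the representation-theoretic spectral gap $1_G\not\prec\kappa_\beta^0$; this is standard for Bernoulli-type actions but I would isolate it as a separate lemma, noting that it is precisely the cocycle-insensitivity of the central sequence algebra (and \emph{not} of the ambient Koopman representation, which can change drastically under perturbation) that makes the invariant usable here.
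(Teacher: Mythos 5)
Your nonamenability direction is correct, and it is a genuinely leaner route than the paper's. The paper obtains this implication as the special case $P=\emptyset$, $Q=\{1\}$ of \autoref{thm:MainTool} (via \autoref{thm:TwoActs}): it transports the $\sigma$-Rokhlin property of $\beta\otimes\mathrm{id}_{\mathcal{Z}}$ --- independent, asymptotically central, asymptotically equivariant projections of rational trace (\autoref{df:sRp}, \autoref{prop:BetaHasSRp}) --- across the cocycle conjugacy (\autoref{prop:sRpCocConj}), deduces $u_{\sigma}\prec_{\mathrm{Z}}\kappa(\beta)^{(0)}$ (\autoref{prop:sRpWkCtmnt}), and then contradicts nonamenability using the decomposition of the Bernoulli Koopman representation into quasiregular representations with finite or finite-index stabilizers (\autoref{thm:sPWkCtmnt}, \autoref{prop:IrredRepWkCont}). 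You observe that for Theorem B one only needs to detect the trivial representation, and for that the full Rokhlin structure is overkill: any nontrivial $G$-fixed central sequence (supplied by the $\mathcal{Z}$ tensor factor) survives cocycle conjugacy --- your cocycle-insensitivity remark is exactly the mechanism behind \autoref{prop:sRpCocConj} --- and produces almost invariant unit vectors in $\kappa(\beta)^{(0)}$; your orbit decomposition in the first paragraph is the $P=\emptyset$ case of \autoref{thm:sPWkCtmnt}. Two points should be made explicit: (i) the isomorphism $F_\omega(\beta)\cong F_\omega(\beta\otimes\mathrm{id}_{\mathcal{Z}})$ requires the conjugating automorphism $\theta$ to preserve the trace so that it acts on the tracial weak closure/ultrapower; this is automatic here because $\bigotimes_{g\in G}\mathcal{D}\otimes\mathcal{Z}$ is monotracial, but it is precisely the point the paper isolates in \autoref{prop:sRpCocConj}; (ii) your spectral computation gives containment of $\kappa(\beta)^{(0)}$ in an \emph{infinite multiple} of $\lambda_G$, which still suffices, since $1_G$ is weakly contained in a multiple of $\lambda_G$ if and only if it is weakly contained in $\lambda_G$. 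Note also why the paper carries the heavier machinery: for \autoref{thm:UnctblyActs} one must recover the full representations $u_{\sigma^G_P}$, not just $1_G$, inside the Koopman representation, and plain central sequences cannot do that.

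The amenable direction, however, is not proved by your sketch, and this is a genuine gap. The paper does not reprove it either: it quotes Corollary~4.8 of \cite{GarHir_strongly_2017} (alternatively the proof of Theorem~1.1 of \cite{Sat_actions_2016}). Your F\o lner strategy is indeed the outline of those proofs, but the pivotal claim --- that a ``uniformly spread'' copy of $\mathcal{Z}$ inside $\bigotimes_{g\in F_n}\mathcal{D}$ becomes asymptotically $\|\cdot\|_{2,\tau}$-invariant under the shift because $|gF_n\triangle F_n|/|F_n|\to 0$ --- fails as stated. Concretely, for an elementary tensor $a_n=\bigotimes_{h\in F_n}d$ with $d$ a non-scalar unitary one computes $\tau\bigl(a_n^*\beta_g(a_n)\bigr)=\overline{\tau(d)}^{\,|F_n\setminus gF_n|}\tau(d)^{|gF_n\setminus F_n|}$, so that $\|a_n-\beta_g(a_n)\|_{2,\tau}^2\geq 2\bigl(1-|\tau(d)|^{|gF_n\triangle F_n|}\bigr)\geq 2\bigl(1-|\tau(d)|^2\bigr)>0$ whenever $gF_n\neq F_n$, uniformly in $n$; and non-unitary product elements satisfy $\|a_n\|_{2,\tau}=\tau(d^*d)^{|F_n|/2}\to 0$ outright. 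Thus no F\o lner-spread product-type copy of $\mathcal{Z}$ is ever asymptotically shift-invariant in $2$-norm; what strong self-absorption gives is only approximate \emph{unitary} equivalence of the shifted embeddings, with unitaries depending on $g$ and $n$, and upgrading that to an honest equivariant unital embedding of $(\mathcal{Z},\mathrm{id}_{\mathcal{Z}})$ into the equivariant central sequence algebra is exactly where the property (SI)/Matui--Sato-type technology of \cite{Sat_actions_2016} and \cite{GarHir_strongly_2017} is needed (in the von Neumann setting one can instead invoke Ocneanu's theorem \cite{Ocn_actions_1985}). The honest fix is to do what the paper does and cite those results; as written, this direction of your argument does not go through.
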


This result implies, in particular, a weak form of part~(2) of Conjecture~%
\ref{cnjintro}: every nonamenable group admits two non-cocycle conjugate
strongly outer actions on $\mathcal{D}$, namely, the Bernoulli shift and its
stabilization with $\mathrm{id}_{\mathcal{Z}}$.

We obtain stronger results for groups having sufficiently many finite
subquotients. A particular instance of our main result (\autoref%
{thm:UnctblyActs}) confirms part~(2) of Conjecture~\ref{cnjintro} for groups
containing $\mathbb{F}_2$:

\begin{thmintro}
\label{thm:C} Let $G$ be a discrete group containing a copy of $\mathbb{F}_2$%
, and let $A$ be a $\mathcal{Z}$-absorbing tracial C*-algebra. (For example,
a strongly self-absorbing C*-algebra.) Then there exist uncountably many
non-cocycle conjugate, strongly outer actions of $G$ on $A$, acting via
asymptotically inner automorphisms of $A$.
\end{thmintro}

The fact that the actions we construct are pointwise asymptotically inner
implies that these actions are not distinguishable by any kind of $K$- or $%
KK $-theoretical invariant, or even the Cuntz semigroup. The way in which we
distinguish them is via the weak equivalence class of the associated Koopman
representation.

Theorem~\ref{thm:C} is the C*-version of Ioana's result on non-orbit
equivalent actions from \cite{Ioa_orbit_2011}. Epstein later combined
Ioana's result with Gaboriau-Lyon's measurable solution to the von Neumann
problem \cite{GabLyo_measurable_2009} to generalize Ioana's work to \emph{all%
} nonamenable groups. A very interesting and promising problem is to find an
analog of the main result in \cite{GabLyo_measurable_2009} in the context of
strongly outer actions on C*-algebras, or at least for outer actions on $%
\mathcal{R}$. A satisfactory solution would allow us to prove Theorem~\ref%
{thm:C} (and Theorem~\ref{thm:D} below) for all nonamenable groups.

Finally, our methods allow us to replace $\mathcal{Z}$ with the hyperfinite
II$_1$-factor $\mathcal{R}$, thus obtaining the following generalization of
a recent result of Brothier-Vaes:

\begin{thmintro}
\label{thm:D} Let $G$ be a discrete group containing a copy of $\mathbb{F}_2$%
, and let $M$ be a McDuff tracial von Neumann algebra. Then there exist
uncountably many non-cocycle conjugate, strongly outer actions of $G$ on $M$%
, acting via asymptotically inner automorphisms of $M$.
\end{thmintro}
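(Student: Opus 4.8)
The plan is to mirror the proof of \autoref{thm:C}, replacing the Jiang--Su algebra $\mathcal{Z}$ by the hyperfinite II$_1$-factor $\mathcal{R}$ and the $\mathcal{Z}$-absorption used there by the McDuff isomorphism $M\cong M\otimes\mathcal{R}$ (von Neumann tensor product). Fixing an embedding $\mathbb{F}_2\leq G$, I would realize $\mathcal{R}\cong\bigotimes_{x\in X}(\mathcal{R},\tau)$ for a suitable countable $G$-set $X$, and to each action $G\curvearrowright^{\sigma}X$ associate the generalized Bernoulli shift $\beta_\sigma$ of $G$ on $\mathcal{R}$ permuting the tensor factors; the action on $M$ is then $\alpha_\sigma:=\id_M\otimes\beta_\sigma$ on $M\otimes\mathcal{R}\cong M$. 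The family of $G$-sets $X$ would be produced from a continuum of $\mathbb{F}_2$-sets through Ioana's co-induction construction, so that the whole problem reduces to the free group, exactly as in \cite{Ioa_orbit_2011} building on \cite{GabPop_uncountable_2005}.

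The distinguishing invariant is the weak-equivalence class of the Koopman representation of $\alpha_\sigma$ on $L^2(M,\tau)$. Since the Koopman unitaries of $\alpha_\sigma$ are $1_{L^2(M)}\otimes U_g^{\beta_\sigma}$, this class coincides with that of the Koopman representation of the pure Bernoulli shift $\beta_\sigma$ on $L^2(\mathcal{R},\tau)$, because amplification by the fixed space $L^2(M)$ does not change the weak-equivalence class. Decomposing $L^2(\mathcal{R},\tau)=\bigoplus_{F\subseteq X\text{ finite}}\bigotimes_{x\in F}H_0$ with $H_0=L^2(\mathcal{R},\tau)\ominus\mathbb{C}1$, the degree-one summand is $\ell^2(X)\otimes H_0$, on which $G$ acts as $\lambda_X\otimes 1$; hence the Koopman weak-equivalence class recovers that of the permutation representation $\lambda_X$ (the higher summands being tensor powers of the degree-one piece). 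I would then invoke, via Ioana's co-induction and Frobenius reciprocity, that the $\mathbb{F}_2$-sets can be chosen so that the resulting $\lambda_X=\mathrm{Ind}_{\mathbb{F}_2}^{G}\lambda_{X_t}$ are pairwise weakly inequivalent for $t$ ranging over an uncountable set, yielding uncountably many $\alpha_\sigma$ that are pairwise inequivalent at the Koopman level.

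Asymptotic innerness is then immediate: any finite permutation of tensor coordinates is implemented by a permutation unitary $u$ in a finite tensor subfactor, so for a fixed $g\in G$ and an element $a\in M$ that is, up to arbitrarily small $\|\cdot\|_2$-error, supported on finitely many coordinates, one has $\Ad(u)(a)\approx\alpha_{\sigma,g}(a)$ for a suitable such $u$; exhausting $X$ shows that each $\alpha_{\sigma,g}$ is approximated in $\|\cdot\|_2$ by inner automorphisms. Choosing the $G$-sets $X$ to be free with infinite orbits makes the shifts $\beta_\sigma$ strongly outer on $\mathcal{R}$, and tensoring with $\id_M$ preserves strong outerness on the McDuff factor $M$.

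The step I expect to be hardest, and which is the genuinely noncommutative input, is proving that the weak-equivalence class of the Koopman representation is a cocycle conjugacy invariant. Invariance under conjugation by automorphisms of $M$ is straightforward, but a cocycle perturbation $\alpha_g\mapsto\Ad(u_g)\circ\alpha_g$ replaces the Koopman representation $U_g$ by $L_{u_g}U_g$, and one must show this perturbed representation is weakly equivalent to $U_g$. I would handle this using that the perturbing unitaries $u_g$ lie in $M$ together with the McDuff absorption, so that after amplifying by the McDuff tensor factor the inner perturbation becomes negligible for weak containment. The remaining rigidity---the existence of a continuum of weakly inequivalent permutation representations surviving induction from $\mathbb{F}_2$ to $G$---is imported from \cite{Ioa_orbit_2011} and \cite{GabPop_uncountable_2005}, while the translation of the $\mathcal{Z}$-absorption facts of \autoref{thm:C} to $\mathcal{R}$ and the $2$-norm topology is expected to be routine, and if anything easier, in the von Neumann setting.
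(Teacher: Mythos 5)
Your skeleton (Bernoulli shifts over $G$-sets built from $\mathbb{F}_2$, a Koopman-type invariant, McDuff absorption $M\cong M\otimes\mathcal{R}$ to land on $M$) matches the paper, but the execution has genuine gaps at exactly the points that carry the proof. The central one is the step you yourself flag as hardest: cocycle-conjugacy invariance of the weak-equivalence class of the Koopman representation. This is not merely hard; it is false in general, even for trace-preserving actions. The paper's example before \autoref{prop:sRpCocConj} (the trivial versus the inner $\mathbb{Z}_2$-action on $M_2$) gives cocycle conjugate, trace-preserving actions whose Koopman representations are $\bigoplus_{j=1}^4 1_{\mathbb{Z}_2}$ and $\bigoplus_{j=1}^2\lambda_{\mathbb{Z}_2}$, which are not weakly equivalent in Zimmer's sense. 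Your proposed remedy (after McDuff amplification the perturbation $L_{u_g}$ becomes negligible for weak containment) is precisely what would need proof, and no mechanism is offered: showing $\kappa(\mathrm{Ad}(u_\cdot)\circ\alpha_\cdot)\prec_{\mathrm{Z}}\kappa(\alpha)$ requires controlling matrix coefficients on \emph{arbitrary} vectors, where centrality arguments give nothing. The paper never proves that the Koopman class is an invariant. Instead it isolates the single containment $u_\sigma\prec_{\mathrm{Z}}\kappa(\alpha)^{(0)}$ witnessed by \emph{asymptotically central} projections (the $\sigma$-Rokhlin property, \autoref{df:sRp}); such witnesses are insensitive to cocycle perturbations (\autoref{prop:sRpCocConj}), so the lower bound transfers to anything cocycle conjugate to the Bernoulli shift, while the upper bound $\kappa^{(0)}_{\sigma^G_Q}|_F\subseteq u_{\sigma^G_Q}|_F$ (\autoref{thm:sPWkCtmnt}) is only ever invoked for the unperturbed Bernoulli shift. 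Cocycle conjugacy then yields the sandwich $u_{\sigma^G_P}|_F\prec_{\mathrm{Z}}\kappa^{(0)}(\beta_{\sigma^G_Q,D})|_F\subseteq u_{\sigma^G_Q}|_F$ and its symmetric counterpart; at no point is invariance of the full Koopman class needed or true.

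Two further problems. First, the continuum of $\mathbb{F}_2$-sets whose induced permutation representations are pairwise weakly inequivalent cannot be ``imported'' from \cite{Ioa_orbit_2011} or \cite{GabPop_uncountable_2005}: those results concern orbit equivalence of p.m.p.\ actions and contain no statement about weak equivalence of quasi-regular representations. Producing such a family is precisely the new work of Section~3 of the paper: separated families of finite-index subgroups (\autoref{df:separated}), their existence in $\mathbb{F}_\infty$ via the quotients $\mathbb{Z}_p$ (\autoref{lma:propF}), and the kernel-separation condition (S.2) used in the proof of \autoref{thm:MainTool} to tell the classes apart. Second, your requirements are internally inconsistent: you ask that the $G$-sets $X$ be \emph{free} with infinite orbits (for strong outerness), but over a free $G$-set every nonzero finitely supported basis vector of $\bigotimes_{x\in X}L^2(\mathcal{R},\tau)$ has \emph{finite} stabilizer (the stabilizer embeds into the permutations of the support), so by \autoref{lma:QregRepContained} the reduced Koopman representation of the Bernoulli shift is sandwiched between $\bigoplus_{n\in\mathbb{N}}\lambda_G$ and itself; hence all Bernoulli shifts over free $G$-sets with infinitely many orbits have weakly equivalent Koopman representations, and your invariant collapses to a single class. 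The distinguishing power in the paper comes exactly from the non-free coset spaces $G/H_p$, whose infinite, finite-index stabilizers produce finite-dimensional subrepresentations, while strong outerness is secured separately by including copies of the left translation $G$-set (\autoref{prop:GenBerShiftOuter}).
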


Even in the case when $M=\mathcal{R}$, our proof is more elementary than the
proof of Theorem~B in~\cite{BroVae_families_2015}, and avoids the use of
Popa's advanced techniques from \cite{Pop_strong_2003} and \cite%
{Pop_some_2006}. It should be mentioned that Theorem~B in~\cite%
{BroVae_families_2015} applies not just to groups containing $\mathbb{F}_2$,
but to arbitrary nonamenable groups.

The rest of the paper is organized as follows. In Section~1, we establish a
number of basic facts about subgroups with finite index that will be
important in the later sections. In Section~2, we study the generalized
Bernoulli shift associated with an action $G\curvearrowright^\sigma X$ on a
discrete set $X$, and relate its Koopman representation to the canonical
unitary representation of $G$ on $\ell^2(X)$. In Section~3, we specialize to
a particular family of generalized Bernoulli shifts, obtained from finite
subquotients of $G$. Finally, Section~4 contains the proofs of our main
results (\autoref{thm:TwoActs} and \autoref{thm:UnctblyActs}), from which
Theorems~\ref{thm:B}, \ref{thm:C} and \ref{thm:D} follow.

\textbf{Acknowledgements:} The authors thank Todor Tsankov for a
conversation on quasiregular representations, as well as Hannes Thiel for
very valuable feedback.

\section{Quasiregular representations}

Given a C*-algebra $D$, we will denote by $\mathcal{S}(D)$ the space of
states over $D$. We begin by recalling the notion of quasiregular
representation.

\begin{df}
\label{df:qrrep} Let $G$ be a discrete group, and let $H$ be subgroup. We
denote by $\lambda_{G/H}\colon G\to{\mathcal{U}}(\ell^2(G/H))$ the unitary
representation induced by the canonical left translation action of $G$ on $%
G/H$. We call $\lambda_{G/H}$ the \emph{quasiregular representation}
associated with $H$.
\end{df}

Let $G$ be a discrete group and let $\mu\colon G\to {\mathcal{U}}({\mathcal{H%
}}_\mu)$ and $\nu\colon G\to {\mathcal{U}}({\mathcal{H}}_\nu)$ be unitary
representations. Recall that $\mu$ is said to be \emph{(unitarily) contained}
in $\nu$, written $\mu\subseteq \nu$, if there exists an isometry $%
\varphi\colon {\mathcal{H}}_\mu\to {\mathcal{H}}_\nu$ satisfying $%
\varphi\circ\mu_g=\nu_g\circ \varphi$ for all $g\in G$. When $\varphi $ can
be chosen to be a unitary, we say that $\mu$ and $\nu$ are \emph{unitarily
equivalent}, and write $\mu\cong \nu$.


\begin{lma}
\label{lma:IndexMultipl} Let $G$ be a discrete group, and let $H_1,\ldots,
H_n$ be subgroups of $G$ whose indices in $G$ are finite. Set $H=H_1\cap
\cdots \cap H_n$. Then:

\begin{enumerate}
\item $H$ has finite index in $G$, and $[G:H]\leq [G:H_1]\cdots [G:H_n]$. If
the indices of $H_1,\ldots, H_n$ in $G$ are relatively prime, then equality
holds.

\item We have $\lambda_{G/H}\subseteq \lambda_{G/H_1}\oplus \cdots\oplus
\lambda_{G/H_n}$, and this containment is an equivalence whenever the
indices of $H_1,\ldots, H_n$ in $G$ are relatively prime.
\end{enumerate}
\end{lma}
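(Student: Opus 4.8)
The plan is to route everything through the diagonal action of $G$ on the finite product $G$-set $X:=(G/H_1)\times\cdots\times(G/H_n)$, equipped with the coordinatewise left-translation action.

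For part~(1), I would first consider the map $\Phi\colon G/H\to X$ defined by $gH\mapsto(gH_1,\ldots,gH_n)$. Because $H\subseteq H_i$ for each $i$, each coordinate $gH\mapsto gH_i$ is well defined, so $\Phi$ is well defined; and $\Phi$ is injective, since $g^{-1}g'\in H_i$ for all $i$ is equivalent to $g^{-1}g'\in H$. As $|X|=[G:H_1]\cdots[G:H_n]<\infty$, this simultaneously shows that $[G:H]$ is finite and that $[G:H]\leq[G:H_1]\cdots[G:H_n]$. For the equality statement I would invoke the tower law $[G:H]=[G:H_i]\,[H_i:H]$, which shows that each $[G:H_i]$ divides $[G:H]$; when the indices are pairwise relatively prime their product therefore divides $[G:H]$ as well, and combined with the inequality above this forces $[G:H]=[G:H_1]\cdots[G:H_n]$.

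For part~(2), the key point is that $\Phi$ is $G$-equivariant for the left-translation action on $G/H$ and the diagonal action on $X$, and that under the canonical unitary identification $\ell^2(G/H_1)\otimes\cdots\otimes\ell^2(G/H_n)\cong\ell^2(X)$ the representation $\lambda_{G/H_1}\otimes\cdots\otimes\lambda_{G/H_n}$ is exactly the permutation representation of $G$ on $\ell^2(X)$ coming from the diagonal action. I would then decompose $X$ into $G$-orbits; this yields a $G$-invariant orthogonal decomposition of $\ell^2(X)$ in which the summand spanned by the orbit of $x_0:=(eH_1,\ldots,eH_n)$ is unitarily equivalent to $\lambda_{G/\mathrm{Stab}(x_0)}$. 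Since $\mathrm{Stab}(x_0)=H_1\cap\cdots\cap H_n=H$, this exhibits a copy of $\lambda_{G/H}$ as a subrepresentation, giving the containment. The containment is an equivalence precisely when the orbit of $x_0$ is all of $X$, that is, when the diagonal action is transitive; by part~(1), in the relatively prime case one has $[G:H]=|X|$, so the single orbit of $x_0$ must exhaust $X$ and the embedding is onto.

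The argument presents no serious obstacle; the one step that requires care is the identification of the tensor product of the quasiregular representations with the permutation representation on the product $G$-set $X$, together with the standard decomposition of a permutation representation as the orthogonal direct sum of the quasiregular representations attached to its orbits. The remainder is the orbit--stabilizer correspondence and elementary divisibility of indices. (I note that the target of the containment should be read as the tensor product $\lambda_{G/H_1}\otimes\cdots\otimes\lambda_{G/H_n}$ rather than a direct sum: already for $G=\mathbb{Z}/6$ with $H_1,H_2$ of indices $2$ and $3$ one has $\lambda_{G/H}\cong\lambda_{G/H_1}\otimes\lambda_{G/H_2}$, whereas $\lambda_{G/H}\not\subseteq\lambda_{G/H_1}\oplus\lambda_{G/H_2}$.)
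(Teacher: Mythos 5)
Your proof is correct and is essentially the paper's own argument: the paper likewise works with the diagonal action on $G/H_1\times\cdots\times G/H_n$ (treating $n=2$ and iterating), realizes $G/H$ as the orbit of the base point $(H_1,\ldots,H_n)$, whose stabilizer is exactly $H$, and turns the orbit map into the intertwining isometry $\ell^2(G/(H_1\cap\cdots\cap H_n))\to\ell^2(G/H_1\times\cdots\times G/H_n)$, onto in the relatively prime case; your tower-law divisibility argument merely makes explicit the surjectivity step that the paper leaves to the reader. Your closing caveat is also exactly right: although the statement is typeset with $\oplus$, the paper's proof intertwines $\lambda_{G/H}$ with the representation acting on $\ell^2(G/H_1\times G/H_2)\cong\ell^2(G/H_1)\otimes\ell^2(G/H_2)$, so the intended (and correct) target is the tensor product $\lambda_{G/H_1}\otimes\cdots\otimes\lambda_{G/H_n}$, i.e.\ the permutation representation of the product $G$-set, the literal direct-sum reading being false already on dimensional grounds as your $\mathbb{Z}/6$ example shows.
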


\begin{proof}
It is enough to prove both parts for $n=2$. We begin with some notation.
Consider the diagonal action on $\ell^2(G/H_1)\otimes \ell^2(G/H_2)\cong
\ell^2(G/H_1\times G/H_2)$ via $\lambda_{G/H_1}\otimes \lambda_{G/H_2}$. Set 
$x=(H_1,H_2)\in G/H_1\times G/H_2$. Then the stabilizer of $x$ is precisely $%
H_1\cap H_2$. Define a map $\psi\colon G\to G/H_1\times G/H_2$ by $%
\psi(g)=g\cdot x$ for all $g\in G$.

(1). Since $\psi$ is the orbit map associated with $x$, we have 
\begin{equation*}
[G:H_1\cap H_2]=|\psi(x)|\leq [G:H_1][G:H_2].
\end{equation*}
When $[G:H_1]$ and $[G:H_2]$ are coprime, one checks that $\psi$ is
surjective, so we get equality.

(2). Consider the induced map $\widehat{\psi}\colon G/(H_1\cap H_2)\to
G/H_1\times G/H_2$, which is given by $\widehat{\psi}(g(H_1\cap
H_2))=(gH_1,gH_2)$ for all $g\in G$. Define $\varphi\colon \ell^2(G/(H_1\cap
H_2))\to \ell^2(G/H_1\times G/H_2)$ on the canonical orthonormal basis by
setting $\varphi(\delta_{g(H_1\cap H_2)})=\delta_{\widehat{\psi}(g)}$ for
all $g\in G$. It is clear that $\varphi$ is an isometry, and that it is a
unitary if $[G:H_1]$ and $[G:H_2]$ are coprime. It remains to show that $%
\varphi$ intertwines $\lambda_{G/(H_1\cap H_2)}$ and $\lambda_{G/H_1}\oplus
\lambda_{G/H_2}$. Given $g,k\in G$, we have 
\begin{align*}
\left(\lambda_{G/H_1}(k)\oplus
\lambda_{G/H_2}(k)\right)\left(\varphi(\delta_{g(H_1\cap H_2)})\right)&=
\left(\lambda_{G/H_1}(k)\oplus
\lambda_{G/H_2}(k)\right)\left(\delta_{(gH_1,gH_2)}\right) \\
&=\delta_{kgH_1,kgH_2} \\
&= \varphi(\delta_{kg(H_1\cap H_2)}) \\
&=\varphi\left( \lambda_{G/(H_1\cap H_2)}(k)(\delta_{g(H_1\cap H_2)})\right).
\end{align*}
This finishes the proof.
\end{proof}

Our final lemma is well-known, so we only sketch the proof; see also \cite%
{KecTsa_amenable_2008}.

\begin{lma}
\label{lma:QregRepContained} Let $G$ be a discrete group, let $S$ be a
subgroup of $G$, and let $H$ be a subgroup of $S$ with $[S : H]<\infty$.
Then $\lambda_{G/S}\subseteq \lambda_{G/H}$.
\end{lma}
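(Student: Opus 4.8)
The plan is to realise $\lambda_{G/S}$ inside $\lambda_{G/H}$ by pulling back along the canonical $G$-equivariant quotient map $q\colon G/H\to G/S$, $gH\mapsto gS$, all of whose fibres have the same finite cardinality $m:=[S:H]$. Concretely, I would define $\varphi\colon \ell^2(G/S)\to \ell^2(G/H)$ on the canonical orthonormal basis by
\[
\varphi(\delta_{gS})=\frac{1}{\sqrt{m}}\sum_{sH\in S/H}\delta_{gsH},
\]
so that $\varphi(\delta_{gS})=\tfrac{1}{\sqrt m}\,\mathbf{1}_{q^{-1}(gS)}$ is $1/\sqrt m$ times the indicator of the fibre of $q$ over $gS$. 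Note first that each summand $\delta_{gsH}$ depends only on the coset $sH\in S/H$ (if $sH=s'H$ then $gsH=gs'H$), so the sum is a legitimate sum indexed by $S/H$.

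The only point that requires any care, and hence the main (if mild) obstacle, is that this formula is independent of the chosen representative $g$ of the coset $gS$. If $gS=g'S$, write $g'=gs_0$ with $s_0\in S$; left multiplication by $s_0$ permutes the set $S/H$ of cosets of $H$ in $S$, so $\{g's'H: s'H\in S/H\}=\{gs_0s'H: s'H\in S/H\}=\{gs'H: s'H\in S/H\}$ as subsets of $G/H$. Thus the fibre $q^{-1}(gS)$ and the associated sum are unchanged, and $\varphi$ is well defined. This is exactly where the subgroup structure of $S$ and the finiteness of $[S:H]$ enter.

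Granting this, the remaining verifications are immediate. Distinct cosets $gS\neq g'S$ have disjoint fibres, so $\varphi(\delta_{gS})$ and $\varphi(\delta_{g'S})$ are supported on disjoint subsets of $G/H$ and are therefore orthogonal, while $\|\varphi(\delta_{gS})\|=\tfrac{1}{\sqrt m}\sqrt m=1$; hence $\varphi$ carries an orthonormal basis to an orthonormal set and extends to an isometry. Finally, for $k\in G$,
\[
\lambda_{G/H}(k)\varphi(\delta_{gS})=\frac{1}{\sqrt m}\sum_{sH\in S/H}\delta_{kgsH}=\varphi(\delta_{kgS})=\varphi\big(\lambda_{G/S}(k)\delta_{gS}\big),
\]
using that $q$ is $G$-equivariant, so by linearity and continuity $\varphi$ intertwines $\lambda_{G/S}$ and $\lambda_{G/H}$. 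This exhibits the desired isometric intertwiner and yields $\lambda_{G/S}\subseteq\lambda_{G/H}$. No step presents a genuine difficulty; all of the content sits in the well-definedness of the fibrewise sum, which simply reflects that $q$ is an equivariant map of $G$-sets with constant finite fibre size.
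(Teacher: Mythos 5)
Your proof is correct and is essentially the paper's own argument: your map $\varphi(\delta_{gS})=\tfrac{1}{\sqrt{m}}\sum_{sH\in S/H}\delta_{gsH}$ is exactly the paper's isometry $\xi\mapsto \tfrac{1}{\sqrt{[S:H]}}\,\xi\circ\pi$ (pullback along the canonical quotient map $\pi\colon G/H\to G/S$) evaluated on basis vectors. The only difference is presentational: defining the map on all of $\ell^2(G/S)$ by composition with $\pi$, as the paper does, makes the well-definedness you verify by hand automatic.
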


\begin{proof}
Let $\pi\colon G/H\to G/S$ be the canonical quotient map. Then $%
\varphi\colon \ell^2(G/S)\to \ell^2(G/H)$ given by $\varphi(\xi)=\frac{1}{%
\sqrt{[S:H]}}\xi\circ\pi$ for all $\xi\in\ell^2(G/S)$, is an equivariant
isometry.
\end{proof}

In particular, if $S$ is a \emph{finite} subgroup of $G$, then $%
\lambda_{G/S}\subseteq \lambda_G$.

\section{Generalized Bernoulli shifts}

In this section, we study a class of group actions on C*-algebras which are
obtained from permutation actions of $G$ on (discrete) sets. First, we need
to discuss how the GNS construction behaves with respect to infinite tensor
products.

\subsection{Infinite tensor products and the GNS construction.}

We briefly review the GNS construction.

\begin{df}
\label{df:GNS} Let $D$ be a C*-algebra, and let $\phi\colon D\to {\mathbb{C}}
$ be a state on $D$. Define $\langle \cdot,\cdot\rangle_\phi \colon D\times
D\to{\mathbb{C}}$ by $\langle a,b\rangle_\phi=\phi(a^*b)$ for all $a,b\in D$%
. Let ${\mathcal{H}}^D_\phi$ denote the Hilbert space obtained as the
Hausdorff completion of $D$ in the seminorm induced by $\langle\cdot,\cdot%
\rangle_\phi$. We denote by $\iota^D_\phi\colon D\to {\mathcal{H}}^D_\phi$
the canonical map with dense image. When $D$ is clear from the context, we
will simply write ${\mathcal{H}}_\phi$ and $\iota_\phi$.

There is a canonical representation of $D$ on ${\mathcal{H}}_\phi$ by left
multiplication, and we denote by $\overline{D}^\phi$ the weak closure of the
image of $D$ in this representation. Then $\phi$ extends to a faithful
normal state on $\overline{D}^\phi$, which we usually denote again by $\phi$.
\end{df}

When $D$ is a von Neumann algebra and $\phi$ is a normal, faithful state on
it, then $\overline{D}^{\phi}=D$ and the extension of $\phi$ is just $\phi$
again.


We turn to infinite tensor products of Hilbert spaces.

\begin{df}
\label{df:InfTensProdHs} Let ${\mathcal{H}}$ be a Hilbert space, let $\eta\in%
{\mathcal{H}}$ be a unit vector, and let $X$ be a discrete set. We define
the tensor product of ${\mathcal{H}}$ over $X$ (along $\eta$) to be the
completion of 
\begin{equation*}
\mathrm{span}\left\{\bigotimes\limits_{x\in X}\xi_x\colon \xi_x\in {\mathcal{%
H}}, \mbox{ and } \xi_x=\eta \mbox{ for all but finitely many }x\in
X\right\},
\end{equation*}
in the norm induced by the pre-inner product given by 
\begin{equation*}
\left\langle \bigotimes\limits_{x\in X}\xi_x,\bigotimes\limits_{x\in
X}\zeta_x\right\rangle=\prod_{x\in X}\langle\xi_x,\zeta_x\rangle.
\end{equation*}
(Observe that all but finitely many of the multiplicative factors above are
equal to 1, so that the product is indeed finite.)
\end{df}

It will be convenient to have a description of an orthonormal basis of an
infinite tensor product of Hilbert spaces.

\begin{lma}
\label{lma:BasisHs} Let ${\mathcal{H}}$ be a Hilbert space, let $\eta\in{%
\mathcal{H}}$ be a unit vector, and let $X$ be a discrete set. Denote by $%
\kappa $ the dimension of ${\mathcal{H}}$. Let $\{\eta_n\colon n\in \kappa
\} $ be an orthonormal basis for ${\mathcal{H}}$ with $\eta_0=\eta$. Then
the set 
\begin{equation*}
\mathcal{F}=\{\xi\colon X\to \kappa \colon \xi(x)=0 
\mbox{ for all but
finitely many } x\in X\}
\end{equation*}
can be canonically identified with an orthonormal basis for $%
\bigotimes\limits_{x\in X}{\mathcal{H}}$.
\end{lma}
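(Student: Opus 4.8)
The plan is to make the identification explicit and then verify the two defining properties of an orthonormal basis, namely orthonormality and totality. To each $\xi\in\mathcal{F}$ I associate the elementary tensor $e_\xi:=\bigotimes_{x\in X}\eta_{\xi(x)}$. Since $\xi$ has finite support and $\eta_0=\eta$, all but finitely many factors of $e_\xi$ equal the distinguished unit vector $\eta$, so $e_\xi$ is a legitimate element of the span appearing in \autoref{df:InfTensProdHs}. The map $\xi\mapsto e_\xi$ is the asserted canonical identification; its injectivity will be a byproduct of orthonormality.

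For orthonormality I would simply invoke the inner product formula of \autoref{df:InfTensProdHs}. Given $\xi,\zeta\in\mathcal{F}$,
\[
\langle e_\xi, e_\zeta\rangle=\prod_{x\in X}\langle\eta_{\xi(x)},\eta_{\zeta(x)}\rangle=\prod_{x\in X}\delta_{\xi(x),\zeta(x)},
\]
using that $\{\eta_n\colon n\in\kappa\}$ is orthonormal. Because $\xi$ and $\zeta$ are finitely supported, all but finitely many factors equal $\langle\eta_0,\eta_0\rangle=1$, so the product is genuinely finite; it equals $1$ if $\xi=\zeta$ and $0$ otherwise. Hence $\{e_\xi\colon\xi\in\mathcal{F}\}$ is an orthonormal system.

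Totality is the substantive step. By \autoref{df:InfTensProdHs} it suffices to show that each defining elementary tensor $\bigotimes_{x\in X}\xi_x$, with $\xi_x=\eta$ for $x$ outside some finite set $F\subset X$, lies in the closed linear span of $\{e_\xi\}$. For each $x\in F$ I expand $\xi_x=\sum_{n\in\kappa}c_{x,n}\eta_n$ in the given orthonormal basis. Working first inside the finite tensor product $\bigotimes_{x\in F}\mathcal{H}$, multilinearity and continuity yield the norm-convergent expansion
\[
\bigotimes_{x\in F}\xi_x=\sum_{s\colon F\to\kappa}\Big(\prod_{x\in F}c_{x,s(x)}\Big)\bigotimes_{x\in F}\eta_{s(x)}.
\]
Fixing the remaining coordinates equal to $\eta$ defines an isometric (hence continuous) embedding $\Phi\colon\bigotimes_{x\in F}\mathcal{H}\to\bigotimes_{x\in X}\mathcal{H}$, which carries $\bigotimes_{x\in F}\eta_{s(x)}$ to $e_{\widehat{s}}$, where $\widehat{s}\colon X\to\kappa$ extends $s$ by $0$ off $F$. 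Applying $\Phi$ term by term exhibits $\bigotimes_{x\in X}\xi_x$ as a norm limit of finite linear combinations of the $e_\xi$, as required.

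I expect the main obstacle to be the bookkeeping in the infinite-dimensional case $\kappa=\infty$: there the expansion of a single elementary tensor is an infinite sum, so the argument genuinely rests on the convergence of the multilinear expansion inside the finite tensor product $\bigotimes_{x\in F}\mathcal{H}$ (a standard fact) together with the continuity of the coordinate-fixing map $\Phi$. Some care is also needed to confirm that $\Phi$ is isometric; this follows from the inner product formula of \autoref{df:InfTensProdHs} applied to tensors supported on $F$, since each extra coordinate contributes a factor $\langle\eta,\eta\rangle=1$. Once orthonormality and totality are both established, $\mathcal{F}$ indexes an orthonormal basis for $\bigotimes_{x\in X}\mathcal{H}$, which is the claim.
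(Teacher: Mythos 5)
Your proof is correct. Note that the paper itself states this lemma without proof, treating it as a standard fact about infinite tensor products of Hilbert spaces; your argument---orthonormality of the vectors $e_\xi$ read off directly from the inner product formula of \autoref{df:InfTensProdHs}, and totality obtained by expanding each defining elementary tensor inside the finite tensor product $\bigotimes_{x\in F}\mathcal{H}$ and pushing the norm-convergent expansion forward along the isometric coordinate-fixing embedding $\Phi$---is precisely the canonical argument the authors leave to the reader, with the only delicate points (convergence when $\kappa$ is infinite, and isometry of $\Phi$) correctly identified and handled.
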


We will need infinite (minimal) tensor products of \emph{unital} C*-algebra
s and infinite (spatial) tensor products of von Neumann algebras (along
states).

Let $D$ be a unital $C^*$-algebra, and let $X$ be a countable set. Write $%
\mathbb{P}_f(X)$ for the set of all finite subsets of $X$, ordered by
inclusion. We define the tensor product $\bigotimes\limits_{x\in X}D$ to be
the direct limit of the minimal tensor products $\bigotimes\limits_{x\in S}D$%
, for $S\in \mathbb{P}_f(X)$, with the canonical connecting maps $%
\iota_{S,T}\colon \bigotimes\limits_{x\in S}D \to \bigotimes\limits_{x\in
T}D $ given by $\iota_{S,T}(d)=d\otimes 1_{T\setminus S}$ for $d\in
\bigotimes\limits_{x\in S}D$, whenever $S,T\in \mathbb{P}_f(X)$ satisfy $%
S\subseteq T$. If $\phi$ is a state on $D$, then the direct limit of the
states $\bigotimes\limits_{x\in S}\phi$, for $S\in\mathbb{P}_f(X)$, defines
a state on $\bigotimes\limits_{x\in X}D$, which we denote by $%
\bigotimes\limits_{x\in X}\phi$. If $\phi$ is a trace, then so is $%
\bigotimes\limits_{x\in X}\phi$.

\begin{rem}
The tensor product $\bigotimes\limits_{x\in X}D$ is canonically isomorphic
to the C*-algebra of operators on $\bigotimes\limits_{x\in X}{\mathcal{H}}$
generated by the operators of the form $\bigotimes_{x\in X}a_x$, where $%
a_x\in D$ and $a_x=1_D$ for all but finitely many $x\in X$.
\end{rem}

If $M$ is a von Neumann algebra and $\phi$ is a normal state on it, the
(spatial) tensor product of $M$ over $X$ with respect to $\phi$ is defined
similarly: by considering $M$ as a unital C*-algebra, we construct its
infinite C*-algebraic tensor product $\bigotimes\limits_{x\in X}M$ as
before. Then $\bigotimes\limits_{x\in X}\phi$ is a state on it, and we
define the infinite spatial tensor product $\overline{\bigotimes\limits_{x%
\in X}} M$ to be the weak-$\ast$ closure of $\bigotimes\limits_{x\in X} M$
in the GNS representation of $\bigotimes\limits_{x\in X}\phi$.

For notational convenience, we will not make a distinction between the
minimal tensor product of C*-algebras and the spatial tensor product of von
Neumann algebras. Here, we will denote both of them by $\otimes$.

Next, we show that GNS constructions commute with infinite tensor products.

\begin{thm}
\label{thm:GNSinfTensProd} Let $D$ be either a unital $C^*$-algebra\ or a
von Neumann algebra, let $X$ be a discrete set, and let $\phi\colon D\to {%
\mathbb{C}}$ be a (normal) state. Set $\eta=\iota_\phi(1_D)\in {\mathcal{H}}%
_\phi$. We will abbreviate $\phi^X=\bigotimes\limits_{x\in X}\phi$. Then
there is a canonical unitary 
\begin{equation*}
u\colon \bigotimes_{x\in X}{\mathcal{H}}_\phi\to {\mathcal{H}}_{\phi^X}
\end{equation*}
determined on a dense subset by 
\begin{equation*}
u\left(\bigotimes_{x\in X}\iota_\phi(a_x)\right)=
\iota_{\phi^X}\left(\bigotimes_{x\in X}a_x\right),
\end{equation*}
where $a_x\in D$ for all $x\in X$, and $a_x=1_D$ for all but finitely many $%
x\in X$. (The tensor product $\bigotimes\limits_{x\in X}{\mathcal{H}}_\phi$
is taken along $\eta$.)
\end{thm}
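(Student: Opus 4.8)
The plan is to construct the unitary $u$ by first defining it on the natural dense spanning sets and then checking that it preserves inner products, so that it extends to a well-defined isometry, and finally to verify surjectivity. On the domain side, $\bigotimes_{x\in X}\Hi_\phi$ is by \autoref{df:InfTensProdHs} the completion of the span of elementary tensors $\bigotimes_{x\in X}\xi_x$ with $\xi_x=\eta$ for all but finitely many $x$. Since $\io_\phi(D)$ is dense in $\Hi_\phi$ and $\io_\phi(1_D)=\eta$, the vectors $\bigotimes_{x\in X}\io_\phi(a_x)$ with $a_x=1_D$ for cofinitely many $x$ span a dense subspace of the domain. On the codomain side, $\Hi_{\phi^X}$ is the GNS space of $\bigotimes_{x\in X}D$, and the vectors $\io_{\phi^X}(\bigotimes_{x\in X}a_x)$ are dense there because $\bigotimes_{x\in X}D$ is the direct limit of the finite tensor products and $\io_{\phi^X}$ has dense image. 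So the formula for $u$ is prescribed on a dense set on both sides, and the first real task is well-definedness.

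For the inner-product computation, I would fix two elementary tensors $\bigotimes_x\io_\phi(a_x)$ and $\bigotimes_x\io_\phi(b_x)$, where all but finitely many of the $a_x$ and $b_x$ equal $1_D$; choose a finite set $S\in\mathbb{P}_f(X)$ containing all the indices where either differs from $1_D$. On the domain side the pre-inner product from \autoref{df:InfTensProdHs} factors as $\prod_{x\in X}\langle\io_\phi(a_x),\io_\phi(b_x)\rangle_\phi=\prod_{x\in X}\phi(a_x^*b_x)$, where all but finitely many factors equal $\phi(1_D)=1$, so it reduces to $\prod_{x\in S}\phi(a_x^*b_x)$. On the codomain side, the defining property of the product state $\phi^X=\bigotimes_{x\in X}\phi$ (as the direct limit of the finite product states) gives $\langle\io_{\phi^X}(\bigotimes_x a_x),\io_{\phi^X}(\bigotimes_x b_x)\rangle_{\phi^X}=\phi^X\bigl((\bigotimes_x a_x)^*(\bigotimes_x b_x)\bigr)=\phi^X(\bigotimes_x a_x^*b_x)=\prod_{x\in S}\phi(a_x^*b_x)$. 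The two quantities agree, which simultaneously shows $u$ is well defined (vectors mapping to $0$ on one side have norm $0$ on the other) and that it is isometric. The map is clearly linear on the span, so it extends uniquely to an isometry between the completions.

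Surjectivity is immediate once isometry is established, since the range of $u$ contains the dense set of vectors $\io_{\phi^X}(\bigotimes_x a_x)$ and is closed (being the image of a complete space under an isometry). Thus $u$ is the desired unitary. The step I expect to require the most care is the inner-product identity, and specifically the clean reduction of both the infinite Hilbert-space inner product and the product-state evaluation to the same finite product $\prod_{x\in S}\phi(a_x^*b_x)$; this hinges on two facts that must be invoked precisely, namely that the product state $\phi^X$ restricted to a finite subtensor $\bigotimes_{x\in S}D$ is exactly $\bigotimes_{x\in S}\phi$ (by its construction as a direct limit), and that the GNS inner product on the codomain is $\langle\io_{\phi^X}(a),\io_{\phi^X}(b)\rangle=\phi^X(a^*b)$ from \autoref{df:GNS}. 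In the von Neumann algebra case no additional argument is needed beyond observing, as noted after \autoref{df:GNS}, that the GNS construction of a normal faithful state returns the algebra itself, so the same computation applies verbatim with $\phi$ normal.
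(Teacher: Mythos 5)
Your proof is correct, and it takes a somewhat different --- and in fact more complete --- route than the paper's. The paper first observes that each tensor factor embedding $\psi_x\colon D\to\bigotimes_{x\in X}D$ satisfies $\phi=\phi^X\circ\psi_x$ and therefore induces an isometry $u_x\colon \mathcal{H}_\phi\to\mathcal{H}_{\phi^X}$ with $u_x\circ\iota_\phi=\iota_{\phi^X}\circ\psi_x$; it then appeals to the universal property of the tensor product to obtain a single bounded map $u$ with $u\circ\theta_x=u_x$ for all $x$, and concludes by asserting that it is ``easy to check'' that $u$ is a unitary satisfying the stated formula, omitting the details. You instead define $u$ directly on the dense span of algebraic elementary tensors and carry out the explicit inner-product factorization, reducing both sides to the same finite product $\prod_{x\in S}\phi(a_x^*b_x)$; this is precisely the computation the paper leaves unverified, and it is what simultaneously yields well-definedness (a genuine issue, since $\phi$ need not be faithful and $\iota_\phi$ need not be injective, so distinct algebra elements can represent the same domain vector), isometry, and --- combined with density of the image and completeness of the domain --- surjectivity. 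The paper's formulation is shorter and isolates the factor-wise isometries $u_x$, which is convenient for equivariance arguments later, while yours is more elementary and self-contained; your closing remark on the von Neumann algebra case is at the same level of detail as the paper's own treatment, which simply declares that case ``essentially identical.''
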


\begin{proof}
We only outline the proof it in the case that $D$ is a C*-algebra, since the
case of a von Neumann algebra is essentially identical. Let $x\in X$ and
write $\psi_x\colon D\to \bigotimes\limits_{x\in X}D$ for the $x$-th tensor
factor embedding. Since $\phi=\phi^X\circ\psi_x$, it follows that $\psi_x$
induces a Hilbert space isometry $u_x\colon {\mathcal{H}}_\phi\to {\mathcal{H%
}}_{\phi^X}$. Then $u_x$ satisfies $u_x\circ \iota_\phi=\iota_{\phi^X}\circ
\psi_x$.

Denote by $\theta_x\colon {\mathcal{H}}_\phi\to \bigotimes\limits_{x\in X}{%
\mathcal{H}}_\phi$ the canonical isometry as the $x$-th tensor factor. By
the universal property of the tensor product, there exists a bounded linear
map $u\colon \bigotimes\limits_{x\in X}{\mathcal{H}}_\phi\to{\mathcal{H}}%
_{\phi^X}$ satisfying $u_x=u\circ\theta_x$ for all $x\in X$. It is then easy
to check that $u$ is a unitary, and that it satisfies the identity in the
statement. We omit the details.
\end{proof}

\subsection{Generalized Bernoulli shifts.}

We begin by introducing some useful notation and terminology. Let $G$ be a
discrete group, and let $X$ be a discrete set. By a \emph{(set) action} of $%
G $ on $X$ we mean a homomorphism $\sigma\colon G\to \mathrm{Perm}(X)$ from $%
G$ to the group $\mathrm{Perm}(X)$ of permutations of $X$. We usually
abbreviate this to $G\curvearrowright^{\sigma} X$. Also, for $g\in G$ and $%
x\in X$, we write $g\cdot x$ for $\sigma(g)(x)$.

\begin{df}
\label{df:UsGenBernoulli} Let $G$ be a countable group, let $X$ be a
countable set, and let $G\curvearrowright^{\sigma} X$ be an action. Endow $X$
with the counting measure, and let $D$ be a unital $C^*$-algebra\ or a
tracial von Neumann algebra.

\begin{enumerate}
\item The \emph{unitary representation associated with $\sigma$} is the
unitary representation $u_\sigma \colon G\to {\mathcal{U}}(\ell^2(X))$ given
by $u_\sigma(g)(\delta_{x})=\delta_{g^{-1}\cdot x}$ for all $g\in G$ and all 
$x\in X$.

\item The \emph{generalized Bernoulli shift associated with $\sigma$} is the
action $\beta_{\sigma,D}\colon G\to {\mathrm{Aut}}\left(\otimes_{x\in
X}D\right)$ given by permuting the tensor factors according to $%
G\curvearrowright^{\sigma} X$.
\end{enumerate}
\end{df}

\begin{nota}
\label{nota:leftTransRegRep} Let $G$ be a discrete group. We will denote by $%
\sigma_G$ the action of left translation $G\curvearrowright G$, so that $%
u_{\sigma_G}$ is the left regular representation $\lambda_G\colon G\to{%
\mathcal{U}}(\ell^2(G))$. Similarly, if $H$ is a subgroup of $G$, we will
denote by $\sigma_{G/H}$ the canonical action $G\curvearrowright G/H$ by
left translation of left cosets, so that $u_{\sigma_{G/H}}$ is the
quasiregular representation $\lambda_{G/H}\colon G\to {\mathcal{U}}%
(\ell^2(G/H))$ from \autoref{df:qrrep}.
\end{nota}

We will also need the Koopman construction, which is a way of obtaining
unitary representations from group actions.

\begin{df}
\label{df:Koopman} Let $G$ be a countable group, let $(D,\phi)$ be a unital $%
C^*$-algebra\ with a state $\phi$, and let $\alpha\colon G\to{\mathrm{Aut}}%
(D)$ be a group action satisfying $\phi\circ\alpha_g=\phi$ for all $g\in G$.

\begin{itemize}
\item The \emph{Koopman representation of $\alpha$ (with respect to $\phi$)}
is the unitary representation $\kappa_\phi (\alpha)\colon G\to{\mathcal{U}}({%
\mathcal{H}}_\phi)$ determined by $\kappa(\alpha)_g(\iota_\phi(a))=\iota_%
\phi(\alpha_g(a))$ for all $g\in G$ and all $a\in D$.

\item The \emph{reduced Koopman representation of $\alpha$ (with respect to $%
\phi$)}, denoted by $\kappa_\phi(\alpha)^{(0)}$, is the restriction of $%
\kappa(\alpha)$ to the orthogonal complement of $\iota_\phi(1_D)$.
\end{itemize}

Since the state $\phi$ will usually be fixed, we will often omit it in the
notation of the Koopman representation. (The only exception to this will
occur in \autoref{prop:sRpCocConj}.)
\end{df}

We denote by $1_G$ the trivial unitary representation of $G$ on ${\mathbb{C}}
$.

\begin{rem}
In the notation of the definition above, $\kappa(\alpha)$ is unitarily
equivalent to $\kappa(\alpha)^{(0)}\oplus 1_G$.
\end{rem}

We proceed to collect some elementary lemmas that will be needed later. If $%
\alpha\colon G\to{\mathrm{Aut}}(D,\phi)$ is a state-preserving action of a
discrete group $G$ on a C*-algebra $D$ with a distinguished state $\phi $,
we write $\overline{\alpha}^\phi\colon G\to{\mathrm{Aut}}(\overline{D}%
^\phi,\phi)$ for its weak extension. It is clear that the (reduced) Koopman
representations of $\alpha$ and of $\overline{\alpha}^\phi$ (with respect to 
$\phi$) agree in a natural way.

The following notation will be useful throughout.

\begin{nota}
\label{nota:kappaSgma} Let $G$ be a countable group, let $%
G\curvearrowright^{\sigma} X$ be an action of $G$ on a countable set $X$,
and let ${\mathcal{H}}$ be a Hilbert space with a distinguished unit vector $%
\eta$. We denote by $\kappa^{{\mathcal{H}}}_\sigma\colon G\to {\mathcal{U}}%
(\otimes_{x\in X}{\mathcal{H}})$ the unitary representation given by
permuting the tensor factors according to $G\curvearrowright^{\sigma} X$.
When ${\mathcal{H}}$ is clear from the context (as it usually will), we
simply write $\kappa_\sigma$. We define $\kappa_\sigma^{(0)}$ to be the
restriction of $\kappa_\sigma$ to the orthogonal complement of $\eta$ in $%
\bigotimes\limits_{x\in X}{\mathcal{H}}$.
\end{nota}

The notation for $\kappa_\sigma$ and $\kappa_\sigma^{(0)}$ is justified by
parts~(2) and~(3) below.

\begin{lma}
Let $G$ be a countable group, let $(D,\phi)$ be a unital $C^*$-algebra\
endowed with a state $\phi$, and let $G\curvearrowright^{\sigma} X$ be an
action of $G$ on a countable set $X$.

\begin{enumerate}
\item The actions $\overline{\beta}^{\phi}_{\sigma,D}$ and $\beta_{\sigma,%
\overline{D}^\phi}$ are both canonically conjugate.

\item The Koopman representations of $\beta_{\sigma,D}$, of $\overline{\beta}%
^{\phi}_{\sigma,D}$, and of $\beta_{\sigma,\overline{D}^{\phi}}$ are all
canonically unitarily equivalent to $\kappa_\sigma$.

\item The reduced Koopman representations of $\beta_{\sigma,D}$, of $%
\overline{\beta}^{\phi}_{\sigma,D}$ and of $\beta_{\sigma,\overline{D}%
^{\phi}}$ are all canonically unitarily equivalent to $\kappa_\sigma^{(0)}$.
\end{enumerate}
\end{lma}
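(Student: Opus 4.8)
The plan is to let the canonical unitary $u\colon \bigotimes_{x\in X}\mathcal{H}_\phi\to \mathcal{H}_{\phi^X}$ from \autoref{thm:GNSinfTensProd} do all the work: a single equivariance check for $u$ yields every equivalence, and parts~(1)--(3) then follow by restriction and by general nonsense about weak extensions. First I would prove part~(2) for $\beta_{\sigma,D}$ itself, by showing that $u$ intertwines $\kappa_\sigma$ with the Koopman representation $\kappa(\beta_{\sigma,D})$ of \autoref{df:Koopman}. Both $\kappa_\sigma$ and $\beta_{\sigma,D}$ are defined by permuting tensor factors according to the very same rule $G\curvearrowright^\sigma X$, so it suffices to evaluate the two composites on the dense set of elementary tensors. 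For $\bigotimes_{x\in X}\iota_\phi(a_x)$ with $a_x=1_D$ off a finite set, one gets $u(\kappa_\sigma(g)\bigotimes_x \iota_\phi(a_x)) = \iota_{\phi^X}(\bigotimes_x a_{g^{-1}\cdot x})$ on one side, and $\kappa(\beta_{\sigma,D})_g(u\bigotimes_x\iota_\phi(a_x)) = \iota_{\phi^X}((\beta_{\sigma,D})_g(\bigotimes_x a_x)) = \iota_{\phi^X}(\bigotimes_x a_{g^{-1}\cdot x})$ on the other. These agree, and since $u$ is unitary this establishes $\kappa(\beta_{\sigma,D})\cong\kappa_\sigma$.

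To extend part~(2) to $\overline{\beta}^{\phi}_{\sigma,D}$ and to $\beta_{\sigma,\overline{D}^\phi}$, I would invoke the general observation recorded just before \autoref{nota:kappaSgma}: the (reduced) Koopman representation of a state-preserving action agrees with that of its weak extension. Applied to $\alpha=\beta_{\sigma,D}$, this identifies $\kappa(\overline{\beta}^{\phi}_{\sigma,D})$ with $\kappa(\beta_{\sigma,D})$, and combined with part~(1) it also identifies $\kappa(\beta_{\sigma,\overline{D}^\phi})$, so all three are simultaneously equivalent to $\kappa_\sigma$.

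Part~(1) is where the genuine content lies, and I expect it to be the main obstacle. After transporting everything along $u$, both $\overline{\beta}^{\phi}_{\sigma,D}$ and $\beta_{\sigma,\overline{D}^\phi}$ act on von Neumann algebras inside $B(\mathcal{H}_{\phi^X})$, and the task is to see that these algebras coincide and that the permutation automorphisms match. One inclusion is immediate: $\bigotimes_{x\in X}D\subseteq\bigotimes_{x\in X}\overline{D}^\phi$ gives $\overline{\bigotimes_x D}^{\phi^X}\subseteq \overline{\bigotimes}_x\overline{D}^\phi$. For the reverse inclusion I would argue that each factorwise copy of $\overline{D}^\phi$ already lies in the weak closure of the corresponding copy of $D$, using that the $x$-th factor embedding is normal and that $D$ is weak-$\ast$ dense in $\overline{D}^\phi$; hence $\overline{\bigotimes}_x\overline{D}^\phi\subseteq\overline{\bigotimes_x D}^{\phi^X}$. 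Equivariance of the resulting identification, and the fact that the weak extension of the permutation action is exactly the Bernoulli shift on $\overline{D}^\phi$, then follow from weak-$\ast$ continuity of the permutation automorphisms. The delicate point is precisely this density/normality step — confirming that the two infinite tensor products have the same weak closure and that the identification is $G$-equivariant — while the remaining verifications are routine unwindings of definitions.

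Finally, for part~(3) I would note that $u$ respects the distinguished vectors: since $\eta=\iota_\phi(1_D)$, the defining formula gives $u(\bigotimes_{x\in X}\eta) = \iota_{\phi^X}(\bigotimes_x 1_D) = \iota_{\phi^X}(1)$. Thus $u$ carries the cyclic vector $\bigotimes_x\eta$ of $\bigotimes_{x\in X}\mathcal{H}_\phi$ to the cyclic vector $\iota_{\phi^X}(1)$ of $\mathcal{H}_{\phi^X}$, and so it restricts to a unitary between their orthogonal complements. As $\kappa_\sigma^{(0)}$ and the reduced Koopman representations are by definition the restrictions to exactly these complements, the restriction of $u$ yields $\kappa(\beta_{\sigma,D})^{(0)}\cong\kappa_\sigma^{(0)}$, and the other two reduced equivalences then follow just as in part~(2).
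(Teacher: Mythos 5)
Your proposal is correct and follows essentially the same route as the paper, which merely observes that the GNS constructions of $D$ and $\overline{D}^{\phi}$ agree, invokes \autoref{thm:GNSinfTensProd}, and omits the remaining details as straightforward. Your write-up supplies exactly those omitted details (the equivariance check for $u$, the two-sided weak-closure comparison via normality of the factor embeddings, and the preservation of the distinguished cyclic vector), so it is a faithful elaboration of the paper's sketch rather than a different argument.
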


\begin{proof}
Observe that $\phi$ extends to a (normal, faithful) state on $\overline{D}%
^{\phi}$, which we denote again by $\phi$. It is easy to check that the GNS
constructions of $D$ and $\overline{D}^{\phi}$ (with respect to $\phi$)
agree in a natural way. Using this observation together with \autoref%
{thm:GNSinfTensProd}, the proof of the lemma is straightforward, and we omit
the details.
\end{proof}

\begin{lma}
\label{lma:UnionSn} For $n\in{\mathbb{N}}$, let $G\curvearrowright^{%
\sigma_n}X_n$ be actions, and let $G\curvearrowright^{\sigma} X$ denote the
disjoint union of them (with $X=\bigsqcup\limits_{n\in{\mathbb{N}}}X_n$).
Let $(D,\phi)$ be a unital $C^*$-algebra\ endowed with a state $\phi$.

\begin{enumerate}
\item The representations $u_\sigma$ and $\bigoplus\limits_{n\in{\mathbb{N}}%
}u_{\sigma_n}$ are canonically unitarily equivalent.

\item The actions $\beta_{\sigma,D}$ and $\bigotimes\limits_{n\in{\mathbb{N}}%
}\beta_{\sigma_n,D}$ are canonically conjugate.

\item The representations $\kappa_\sigma$ and $\bigotimes\limits_{n\in{%
\mathbb{N}}}\kappa_{\sigma_n}$ are canonically unitarily equivalent.

\item The representations $\kappa^{(0)}_\sigma$ and $\bigotimes\limits_{n\in{%
\mathbb{N}}}\kappa^{(0)}_{\sigma_n}$ are canonically unitarily equivalent.
\end{enumerate}
\end{lma}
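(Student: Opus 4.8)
The plan is to reduce all four parts to a single structural observation: since $\sigma$ is the disjoint union of the $\sigma_n$, the group $G$ preserves each $X_n$ setwise, and consequently every canonical ``regrouping'' decomposition indexed by the partition $X=\bigsqcup_n X_n$ is automatically $G$-equivariant. I would install these decompositions one layer at a time --- first on Hilbert spaces, then on the tensor-product algebra, then on the associated representations --- and simply read off each assertion. The only inputs needed beyond elementary bookkeeping are the direct-limit description of infinite tensor products and \autoref{thm:GNSinfTensProd}.

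For part~(1), I would use the canonical identification $\ell^2(X)\cong\bigoplus_{n}\ell^2(X_n)$ arising from partitioning the orthonormal basis $\{\delta_x\}_{x\in X}$ according to the blocks $X_n$. Because $g^{-1}\cdot x\in X_n$ precisely when $x\in X_n$, each summand $\ell^2(X_n)$ is $u_\sigma$-invariant, and the restriction of $u_\sigma$ to it is by definition $u_{\sigma_n}$; this yields the equivalence. For part~(2), I would regroup tensor factors: the defining direct limit of the infinite tensor product gives a canonical isomorphism $\bigotimes_{x\in X}D\cong\bigotimes_{n}\bigl(\bigotimes_{x\in X_n}D\bigr)$ under which the product state $\bigotimes_{x\in X}\phi$ corresponds to $\bigotimes_{n}\bigl(\bigotimes_{x\in X_n}\phi\bigr)$. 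Since $\sigma$ permutes the factors within each block $X_n$ by $\sigma_n$ and never carries a factor from one block to another, the automorphism $\beta_{\sigma,D}(g)$ is taken to $\bigotimes_n\beta_{\sigma_n,D}(g)$, which is the asserted conjugacy.

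Parts~(3) and~(4) then follow formally. Applying \autoref{thm:GNSinfTensProd} identifies $\mathcal{H}_{\phi^X}$ with $\bigotimes_{x\in X}\mathcal{H}_\phi$ and, compatibly with the regrouping of part~(2), with $\bigotimes_{n}\bigl(\bigotimes_{x\in X_n}\mathcal{H}_\phi\bigr)$; under this identification the Koopman representation of $\beta_{\sigma,D}$, namely $\kappa_\sigma$, is carried to $\bigotimes_n\kappa_{\sigma_n}$, giving~(3). The canonical unitary obtained this way sends the distinguished vacuum $\bigotimes_{x\in X}\eta$ to $\bigotimes_n\eta_n$, where $\eta_n=\bigotimes_{x\in X_n}\eta$, so restricting each side to the orthogonal complement of its vacuum vector yields~(4) at once.

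I expect no genuine obstacle; the content is entirely bookkeeping. The one point requiring care is the initial regrouping isomorphism in part~(2): one must verify on the dense $*$-subalgebra of finitely supported elementary tensors that the map is well defined, multiplicative, state-preserving for the two product states, and $G$-equivariant, and only then pass to completions (respectively weak closures). A second, purely notational, point is that in part~(4) the symbol $\bigotimes_n\kappa_{\sigma_n}^{(0)}$ should be read as the restriction of $\bigotimes_n\kappa_{\sigma_n}$ to the orthogonal complement of the total vacuum $\bigotimes_n\eta_n$ --- equivalently, the reduced part of the full tensor product --- rather than as a tensor product of the individual reduced representations (which would fail already for two blocks); I would make this convention explicit so that~(4) is an immediate corollary of~(3).
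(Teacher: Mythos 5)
Your proposal is correct and follows essentially the same route as the paper's (very brief) proof: the canonical unitary $\ell^2(X)\cong\bigoplus_{n}\ell^2(X_n)$ intertwining $u_\sigma$ with $\bigoplus_n u_{\sigma_n}$, the regrouping isomorphism $\bigotimes_{x\in X}D\cong\bigotimes_{n}\bigotimes_{x\in X_n}D$ intertwining $\beta_{\sigma,D}$ with $\bigotimes_n\beta_{\sigma_n,D}$, and \autoref{thm:GNSinfTensProd} to transport this to the Koopman representations. Your closing caveat about part (4) --- that $\bigotimes_n\kappa^{(0)}_{\sigma_n}$ must be read as the restriction of $\bigotimes_n\kappa_{\sigma_n}$ to the orthogonal complement of the total vacuum, not as a tensor product of the individual reduced representations (which is not even defined, since the reduced spaces carry no distinguished unit vectors) --- is the correct reading and a point the paper leaves implicit.
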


\begin{proof}
All parts follow from the facts that $\ell^2(\sqcup_{n\in{\mathbb{N}}}X_n)$
is canonically isometrically isomorphic to $\oplus_{n\in{\mathbb{N}}%
}\ell^2(X_n)$, as witnessed by a unitary intertwining the representations $%
\sigma$ and $\oplus_{n\in{\mathbb{N}}}\sigma_n$; and that $\otimes_{x\in X}D$
is canonically isomorphic to $\otimes_{n\in{\mathbb{N}}}\otimes_{x\in X_n}D$%
, as witnessed by an isomorphism that intertwines $\beta_{\sigma,D}$ and $%
\otimes_{n\in{\mathbb{N}}}\beta_{\sigma_n,D}$.
\end{proof}


We need to recall the following definition.

\begin{df}
\label{df:2norm} Let $D$ be a C*-algebra, and let $\phi\in \mathcal{S}(D)$
be a state on it. Given $a\in D$, we set $\|a\|_{2,\phi}=\phi(a^*a)^{1/2}$.
\end{df}

\begin{rem}
If $D$ is a C*-algebra and $\phi\in \mathcal{S}(D)$, then $%
\|ab\|_{2,\phi}\leq \|a\|\|b\|_{2,\phi}$ for all $a,b\in D$.
\end{rem}

By definition, ${\mathcal{H}}_\phi$ is the (Hausdorff) completion of $D$
with respect to $\|\cdot\|_{2,\phi}$. On the other hand, with $D_1$ denoting
the norm-unit ball of $D$, Kaplansky's density theorem implies that the weak
closure $\overline{D}^\phi$ of $D$ in the GNS construction for $\phi$ can be
identified with $\overline{\mbox{span}}\left(\overline{D_1}%
^{\|\cdot\|_{2,\phi}}\right)\subseteq {\mathcal{B}}({\mathcal{H}}_\phi)$.

The following Rokhlin-type property will allow us to relate the unitary
representations $u_\sigma$ from \autoref{df:UsGenBernoulli} and $%
\kappa_\sigma$ from \autoref{nota:kappaSgma}; see \autoref{prop:sRpWkCtmnt}.

\begin{df}
\label{df:sRp} Let $G$ be a countable group, let $D$ be a C*-algebra, let $%
\phi\in \mathcal{S}(D)$, and let $G\curvearrowright^{\sigma} X$ be an action
of $G$ on a countable set $X$. An action $\alpha\colon G\to{\mathrm{Aut}}(D)$
preserving $\phi$ is said to have the \emph{$\sigma$-Rokhlin property (with
respect to $\phi$)} if there exist projections $p_x^{(n)}\in \overline{D}%
^\phi$, for $x\in X$ and $n\in{\mathbb{N}}$, satisfying the following
conditions:

\begin{enumerate}
\item $\lim\limits_{n\to\infty}\|\overline{\alpha}^\phi_g(p_x^{(n)})-p_{g%
\cdot x}^{(n)}\|_{2,\phi}\to 0$ for every $g\in G$ and $x\in X$.

\item $\lim\limits_{n\to\infty}\|ap_x^{(n)}-p_{x}^{(n)}a\|_{2,\phi}\to 0$
for every $a\in D$ and $x\in X$.

\item $\phi(p_x^{(n)}p_y^{(n)})=\phi(p_x^{(n)})\phi(p_y^{(n)})$ for all $%
x,y\in X$ with $x\neq y$ and for all $n\in{\mathbb{N}}$.

\item $\phi(p_x^{(n)})$ does not depend on $x$ or on $n$, and belongs to ${%
\mathbb{Q}}\cap (0,1)$.
\end{enumerate}
\end{df}

Condition (3) asserts that the different projections are ``independent",
while Condition (4) is a non-triviality condition, since any action
satisfies conditions (1), (2) and (3) above with $p_x^{(n)}=0$ for all $x\in
X$ and all $n\in{\mathbb{N}}$.

Having a projection whose value on $\phi$ is rational and nontrivial is a
mild condition, although it is not automatic, even if $\phi$ is a trace. For
example, ${\mathbb{C}}\oplus{\mathbb{C}}$ with trace $\tau$ determined by $%
\tau(1,0)=\sqrt{2}/2$, has no projections of rational trace other than 0 and
1.

In contrast to the example above, our next result, which is of independent
interest, shows that every infinite-dimensional tracial von Neumann algebra
always has projections of arbitrarily small (rational) trace.

\begin{thm}
\label{thm:InfDimvNaPjns} Let $M$ be an infinite dimensional von Neumann
algebra, let $\phi\in \mathcal{S}(M)$ be a normal faithful state, and let $%
\mathcal{P}(M)$ be the set of its projections. Then there exists $%
0<\varepsilon\leq 1$ such that $[0,\varepsilon]\subseteq \phi(\mathcal{P}%
(M)) $.
\end{thm}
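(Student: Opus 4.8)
The plan is to produce the interval by restricting $\phi$ to a well-chosen abelian subalgebra and invoking the intermediate value property of non-atomic measures; the argument splits according to the structure of $M$.

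I would first record a structural dichotomy coming from the classification of von Neumann algebras: an infinite dimensional $M$ either contains a diffuse (non-atomic) abelian von Neumann subalgebra, or is isomorphic to a direct sum $\bigoplus_{i} M_{n_i}(\mathbb{C})$ of finite-dimensional factors, necessarily with infinitely many summands since $M$ is infinite dimensional. Indeed, a continuous direct summand, a type I summand of infinite multiplicity, or a type I summand with diffuse center each produces a copy of $L^\infty[0,1]$ inside $M$; in the remaining case $M$ is a direct sum of finite matrix blocks.

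Suppose first that $M$ contains a diffuse abelian von Neumann subalgebra $A$, with unit $z=1_A$; faithfulness of $\phi$ gives $c:=\phi(z)>0$. Writing $A\cong L^\infty(\Omega,\mu)$, the restriction $\phi|_A$ is normal and faithful, hence given by integration against a measure $\nu$ on $\Omega$ of total mass $c$ that is equivalent to $\mu$; in particular $\nu$ is non-atomic. Since every characteristic function $\chi_E$ is a projection of $A\subseteq M$, and since a non-atomic finite measure attains every value in $[0,c]$, we obtain $[0,c]\subseteq\phi(\mathcal{P}(M))$, so $\varepsilon=c$ works. (This already covers every infinite dimensional factor, where one may take $\varepsilon=1$.)

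The remaining, and more delicate, case is $M\cong\bigoplus_i M_{n_i}(\mathbb{C})$. Here $\phi=\sum_i\lambda_i\phi_i$ with $\lambda_i>0$, $\sum_i\lambda_i=1$, and each $\phi_i=\Tr(\rho_i\,\cdot\,)$ for an invertible density $\rho_i$, so for a projection $p=(p_i)_i$ we have $\phi(p)=\sum_i\lambda_i\Tr(\rho_i p_i)$. Within a single block the value $\Tr(\rho_i p_i)$, as $p_i$ ranges over rank-$r$ projections, sweeps out the whole interval between the sums of the $r$ smallest and the $r$ largest eigenvalues of $\rho_i$ (the rank-$r$ Grassmannian is connected and $\Tr(\rho_i\,\cdot\,)$ is continuous). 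Thus $\phi(\mathcal{P}(M))$ is an infinite sumset of finite unions of intervals, one family per block, and the crux is to show this sumset fills an interval $[0,\varepsilon]$ at the bottom. This is a Kakeya-type problem about the increments available near $0$: I expect the main obstacle to be ruling out gaps accumulating at $0$ (bare subset sums of rapidly decaying weights, as for one-dimensional blocks, can leave a Cantor-like set), and I would attack it by exploiting the block multiplicities together with the continuum of rank intervals to force enough overlap, grouping blocks and applying a Kakeya-style criterion to a summable sequence of achievable increments whose tails dominate each term. Controlling these gaps near $0$ is where essentially all the difficulty lies.
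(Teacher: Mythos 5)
Your structural dichotomy and your Case 1 are correct, and Case 1 is in substance the paper's own argument for most of its cases: the paper's Case 1.2 (center with a non-atomic part), Case 2 (infinite dimensional factors, via an abelian subalgebra of a copy of the hyperfinite II$_1$-factor) and Case 3 (finite dimensional center) all reduce, just as you do, to restricting $\phi$ to a diffuse abelian subalgebra and invoking the intermediate value property of non-atomic measures. The genuine gap is your second case, $M\cong\bigoplus_i M_{n_i}(\mathbb{C})$: what you offer there is a program (``I would attack it by\dots'', ``I expect\dots''), not a proof, and this gap cannot be closed, because the statement is false in exactly that case. Take all blocks one-dimensional, $M=\ell^\infty(\mathbb{N})=\bigoplus_{i\in\mathbb{N}}\mathbb{C}$, and $\phi(f)=\sum_{i\geq 1}9\cdot 10^{-i}f(i)$. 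This is a normal faithful state on an infinite dimensional von Neumann algebra; the projections of $M$ are the indicators $\chi_S$ with $S\subseteq\mathbb{N}$, so $\phi(\mathcal{P}(M))$ is the set of reals of the form $\sum_{i\in S}9\cdot 10^{-i}$, i.e.\ those whose decimal digits lie in $\{0,9\}$. This is a Cantor set disjoint from every interval $(10^{-n},9\cdot 10^{-n})$, and these gaps accumulate at $0$, so no $\varepsilon>0$ works. Your own parenthetical worry --- that one-dimensional blocks with rapidly decaying weights leave a Cantor-like set of subset sums --- is precisely this obstruction; and since ``infinite dimensional'' permits all blocks to be one-dimensional, there are no rank intervals and no multiplicities for your hoped-for Kakeya-style overlap argument to exploit.

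Moreover, the paper's own proof breaks at exactly the same point, so what you have located is a counterexample to the theorem as stated, not merely a hole in your argument. The paper's Case 1.1 (purely atomic center) runs a greedy selection of atoms $Y_1\subseteq Y_2\subseteq\cdots$ with $\mu(Y_n)\leq r$ and asserts that the $L^1$-limit of $(\chi_{Y_n})_n$ is the indicator of a set of measure exactly $r$; but that limit is $\chi_{\bigcup_n Y_n}$, whose measure is $\lim_n\mu(Y_n)$, and the process can get stuck strictly below $r$: in the example above with $r=1/2$, the atom of weight $9/10$ can never be added, so every $Y_n$ has measure at most $\sum_{i\geq 2}9\cdot 10^{-i}=1/10$. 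What survives is exactly the diffuse case, i.e.\ your Case 1: the conclusion holds with $\varepsilon=\phi(1_A)$ whenever $M$ contains a diffuse abelian von Neumann subalgebra $A$. That weaker statement is also all the paper actually needs downstream: in \autoref{prop:BetaHasSRp} the theorem is applied to the weak closure of an infinite tensor product of algebras of dimension at least two, and an independent sequence of projections $q\otimes 1\otimes\cdots$, $1\otimes q\otimes 1\otimes\cdots$ with $0<\phi(q)<1$ generates a diffuse abelian subalgebra (an $L^\infty$ of a Bernoulli product measure), to which your Case 1 applies. So: your first case is right and matches the paper; your second case is an honestly flagged gap; and the correct resolution of that gap is not a cleverer sumset argument but a diffuseness hypothesis added to the statement.
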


\begin{proof}
Without loss of generality, we will assume that $M$ has separable predual.
We divide the proof into three cases.

\emph{Case 1: the center of $M$ is infinite dimensional.} Then there are a
standard probability space $(Y,\mu )$ whose measure is not concentrated on
finitely many points, and an isomorphism $(Z(M),\phi )\cong L^{\infty
}(X,\mu )$. It suffices to show that for every sufficiently small $r\in
\lbrack 0,1]$, there exists a measurable subset $E\subseteq X$ with $\mu
(E)=r$. (This implies $\phi ({\mathcal{P}}(M))=[0,1]$.) Again we consider
two cases.

\begin{itemize}
\item \emph{Case 1.1: $Y$ is purely atomic.} Let $r\in [0,1]$. Find $y_1\in
A $ such that $\mu(\{y_1\})\leq r$, and set $Y_1=\{y_1\}$. If $\mu(Y_1)< r$,
find $y_2\in Y$ such that $\mu(\{y_1, y_2\})\leq r$, and set $Y_2=Y_1\cup
\{y_2\}$. This process terminates if there are $y_1,\ldots,y_n\in Y$ such
that $\mu(\{y_1,\ldots,y_n\}=r$. Otherwise there is a sequence $(Y_n)_{n\in{%
\mathbb{N}}}$ of subsets of $Y$ such that $(\chi_{Y_n})_{n\in{\mathbb{N}}}$
is a Cauchy sequence in $L^1(Y,\mu)$. Then its limit must be the
characteristic function of a set $E\subseteq Y$ of measure $r$.

\item \emph{Case 1.2: $Y$ is not purely atomic.} In this case, after
replacing $Y$ with a Borel subset, and then renormalizing the restriction of
the measure, we can assume that $(Y,\mu)$ is atomless, and hence isomorphic
to $[0,1]$ with the Lebesgue measure. In this case, the conclusion is
obvious.

\end{itemize}

\emph{Case 2: $M$ is a factor.} Since $M$ is infinite dimensional, it
contains the hyperfinite II$_1$-factor $\mathcal{R}$ as a (unital)
subfactor. Upon restricting $\phi$ to any infinite dimensional abelian
subalgebra of $\mathcal{R}$, it follows that $\phi({\mathcal{P}}(M))=[0,1]$.

\emph{Case 3: the center of $M$ is finite dimensional.} In this case, $M$ is
a direct sum of factors $M\cong N_1\oplus\cdots\oplus N_m$. Since $M$ is
infinite dimensional, there exists $k=1,\ldots,m$ such that $N_k$ is
infinite dimensional. In this case, with $1_k$ denoting the unit of $N_k$ in 
$M$, it follows from Case 2 that $[0,\phi(1_k)]\subseteq \phi(\mathcal{P}%
(M)) $.
\end{proof}

Next, we construct examples of actions with the $\sigma$-Rokhlin property
using generalized Bernoulli shifts.

\begin{prop}
\label{prop:BetaHasSRp} Let $G$ be a countable group, let $D$ be a unital $%
C^*$-algebra, and let $\phi\in\mathcal{S}(D)$ be a state for which $%
\overline{D}^\phi$ is not one-dimensional. Let $G\curvearrowright^{\sigma} X$
be an action of $G$ on a countable set $X$. Then $\bigotimes\limits_{n\in{%
\mathbb{N}}}\beta_{\sigma,D}$ has the $\sigma$-Rokhlin property. Moreover,
if $\alpha\colon G\to{\mathrm{Aut}}(A)$ is any action on a C*-algebra $A$,
and $\phi_A$ is an $\alpha$-invariant state on it, then $\alpha\otimes
\bigotimes\limits_{n\in{\mathbb{N}}}\beta_{\sigma,D}$ has the $\sigma$%
-Rokhlin property (with respect to $\phi_A\otimes \bigotimes\limits_{n\in{%
\mathbb{N}}}\phi$).
\end{prop}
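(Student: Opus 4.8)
The plan is to write down the projections required by \autoref{df:sRp} explicitly, placing them inside the ``fresh'' tensor copies supplied by the factor $\bigotimes_{n\in\N}$ and invoking \autoref{thm:InfDimvNaPjns} to produce the rational trace demanded by condition~(4). It suffices to prove the second assertion, as the first is the special case $A=\C$, $\alpha$ trivial and $\phi_A=\mathrm{id}_{\C}$. Write the ambient algebra as $B=A\otimes\bigotimes_{(k,x)\in\N\times X}D$ with product state $\Phi=\phi_A\otimes\bigotimes_{(k,x)}\phi$, and put $\gamma=\alpha\otimes\bigotimes_{k\in\N}\beta_{\sigma,D}$. Then $\gamma$ acts on the index set $\N\times X$ by $g\cdot(k,x)=(k,g\cdot x)$, permuting tensor factors within each level $k$ while fixing the level. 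For $x\in X$ and $n\in\N$ let $C_x^{>n}\subseteq\ov{B}^{\Phi}$ be the von Neumann subalgebra generated by the slots $(k,x)$ with $k>n$; since $\ov{D}^{\phi}$ is not one-dimensional, $C_x^{>n}$ is an infinite spatial tensor power of $\ov{D}^{\phi}$, hence infinite dimensional, and $\Phi$ restricts to a normal faithful state on it.

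Next I would fix the projections. Applying \autoref{thm:InfDimvNaPjns} to $C_x^{>n}$ with the restriction of $\Phi$, and noting that all these algebras-with-state are mutually isomorphic, we obtain a single $\rho\in(0,1]$ with $[0,\rho]\subseteq\Phi(\mathcal{P}(C_x^{>n}))$ for every $x$ and $n$; fix once and for all a rational $r\in(0,\rho)$. Choose representatives for the $G$-orbits on $X$. For each representative $x_0$ and each $n$, pick a projection $p_{x_0}^{(n)}\in C_{x_0}^{>n}$ with $\Phi(p_{x_0}^{(n)})=r$, and for $x=g\cdot x_0$ set $p_x^{(n)}:=\ov{\gamma}^{\Phi}_g(p_{x_0}^{(n)})$. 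This is well defined: if $h$ stabilizes $x_0$, then the permutation induced by $h$ fixes each slot $(k,x_0)$, so $\ov{\gamma}^{\Phi}_h$ acts as the identity on $C_{x_0}^{>n}$ and fixes $p_{x_0}^{(n)}$. By construction $p_x^{(n)}$ is supported on the slots $(k,x)$ with $k>n$.

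I would then verify that conditions~(1), (3) and~(4) hold exactly. Equivariance $\ov{\gamma}^{\Phi}_g(p_x^{(n)})=p_{g\cdot x}^{(n)}$ is immediate from the definition, so the sequence in~(1) is identically zero. For $x\neq y$ the projections $p_x^{(n)}$ and $p_y^{(n)}$ are supported on the disjoint columns $x$ and $y$; since $\Phi$ is a product state over the slots, they commute and $\Phi(p_x^{(n)}p_y^{(n)})=\Phi(p_x^{(n)})\Phi(p_y^{(n)})$, which is~(3). Finally $\Phi$ is $\gamma$-invariant—here one uses that $\phi_A$ is $\alpha$-invariant—so $\Phi(p_x^{(n)})=\Phi(p_{x_0}^{(n)})=r\in\Q\cap(0,1)$, independently of $x$ and $n$; this is~(4).

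The only asymptotic statement is condition~(2), and this is where the escaping supports do the work. Since finite-support elements are dense in $\|\cdot\|_{2,\Phi}$ and $\|ab\|_{2,\Phi}\le\|a\|\,\|b\|_{2,\Phi}$, it is enough to treat $a$ supported on finitely many slots; such an $a$ meets column $x$ only in slots with $k\le M$ for some $M$, so for every $n\ge M$ the projection $p_x^{(n)}$—supported on slots $(k,x)$ with $k>n>M$—has support disjoint from that of $a$ and hence commutes with it. Thus the sequence in~(2) is eventually zero, and a routine approximation argument extends this to all $a\in B$. I expect the one genuine obstacle to be reconciling the \emph{exact} rational trace of~(4) with the escaping support needed for~(2); the resolution is exactly that the tail algebras $C_x^{>n}$ stay infinite dimensional, so that \autoref{thm:InfDimvNaPjns} furnishes projections of the prescribed rational trace arbitrarily far out along the fresh copies indexed by $\bigotimes_{k\in\N}$.
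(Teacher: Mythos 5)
Your proof is correct and follows essentially the same strategy as the paper's: use \autoref{thm:InfDimvNaPjns} to produce a projection of rational trace and place copies of it in tensor slots that escape to infinity with $n$, so that equivariance, independence and constancy of the trace hold exactly, while asymptotic centrality follows from the escaping supports plus a density argument. The only minor differences are that you apply \autoref{thm:InfDimvNaPjns} to the infinite tail algebras $C_x^{>n}$ (where the paper instead regroups $\bigotimes_{n\in{\mathbb{N}}}\beta_{\sigma,D}\cong\bigotimes_{n\in{\mathbb{N}}}\beta_{\sigma,\otimes_{n\in{\mathbb{N}}}D}$ so that $\overline{D}^{\phi}$ may be assumed infinite dimensional), and that you define the projections by translating along orbit representatives (where the paper simply places the \emph{same} projection $p$ in the $(n,x)$-th slot, which renders your well-definedness check unnecessary).
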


\begin{proof}
Observe that $\otimes_{n\in{\mathbb{N}}}\beta_{\sigma,D}$ is conjugate to $%
\otimes_{n\in{\mathbb{N}}}\beta_{\sigma,\otimes_{n\in{\mathbb{N}}}D}$, and
that the weak closure of $\otimes_{n\in{\mathbb{N}}}D$ with respect to $\phi$
is infinite dimensional. Thus we may assume, without loss of generality,
that $\overline{D}^\phi$ is infinite dimensional.

Use \autoref{thm:InfDimvNaPjns} to fix a projection $p\in \overline{D}^\phi$
with $\phi(p)\in \mathbb{Q}\cap (0,1)$. For $n\in{\mathbb{N}}$ and $x\in X$,
denote by 
\begin{equation*}
\iota_{x}^{(n)}\colon \overline{D}^\phi \to \bigotimes_{n\in{\mathbb{N}}%
}\bigotimes_{x\in X}\overline{D}^\phi
\end{equation*}
the canonical embedding into the $(n,x)$-th tensor factor, and set $%
p_x^{(n)}=\iota_x^{(n)}(p)$. Since $(\overline{\beta}^\phi_{\sigma,D})_g%
\circ\iota_x^{(n)}=\iota_{g\cdot x}^{(n)}$ for all $g\in G$, all $x\in X$
and all $n\in{\mathbb{N}}$, condition (1) is guaranteed. The remaining
conditions are easily checked.

For the last assertion, if $\{p_x^{(n)}\colon x\in X, n\in{\mathbb{N}}\}$
are projections as in \autoref{df:sRp} for $\bigotimes\limits_{n\in{\mathbb{N%
}}}\overline{\beta}^\phi_{\sigma,D}$, then $\{1\otimes p_x^{(n)}\colon x\in
X, n\in{\mathbb{N}}\}$ witness the fact that $\alpha\otimes
\bigotimes\limits_{n\in{\mathbb{N}}}\beta_{\sigma,D}$ has the $\sigma$%
-Rokhlin property with respect to $\phi_A\otimes \bigotimes\limits_{n\in{%
\mathbb{N}}}\phi$.
\end{proof}

In particular, if $\sigma$ is conjugate to its infinite amplification $%
\sigma\times{\mathrm{id}}_{{\mathbb{N}}}$, then $\beta_{\sigma,D}$ always
has the $\sigma$-Rokhlin property.

We recall the definition of weak containment for representations in the
sense of Zimmer; see also Definition~1.3 in~\cite{BurKec_weak_2016}.

\begin{df}
\label{df:wkcont} Let $G$ be a discrete group, and let $\mu\colon G\to{%
\mathcal{U}}({\mathcal{H}}_\mu)$ and $\nu\colon G\to{\mathcal{U}}({\mathcal{H%
}}_\nu)$ be unitary representations. We say that \emph{$\mu$ is weakly
contained in $\nu$ in the sense of Zimmer}, in symbols $\mu\prec_{\mathrm{Z}%
}\nu$, if for any $\varepsilon>0$, for any $\xi_1,\ldots,\xi_n\in {\mathcal{H%
}}_\mu$, for any finite subset $F\subseteq G$, and for any $\varepsilon>0$,
there exist $\eta_1,\ldots,\eta_n\in{\mathcal{H}}_\nu$ satisfying 
\begin{equation*}
\left|\langle \mu_g(\xi_j),\xi_k\rangle - \langle
\nu_g(\eta_j),\eta_k\rangle\right|<\varepsilon
\end{equation*}
for all $g\in F$ and for all $j,k=1,\ldots,n$.

Finally, we say that $\mu$ and $\nu$ are \emph{weakly equivalent in the
sense of Zimmer}, written $\mu\sim_{\mathrm{Z}}\nu$, if $\mu\prec_{\mathrm{Z}%
}\nu$ and $\nu\prec_{\mathrm{Z}}\mu$.
\end{df}

We will not be using the standard notion of weak containment, which is
weaker. (In fact, one can show $\mu $ is weakly contained in $\nu $ in the
usual sense if and only if $\mu $ is weakly contained, in the sense of
Zimmer, in the infinite amplification of $\nu $.) It is obvious that, when $%
G $ is countable, $\mu \prec _{\mathrm{Z}}\nu $ if and only if, for every
separable subrepresentation $\mu ^{\prime }$ of $\mu $, we have $\mu ^{\prime }\prec
_{\mathrm{Z}}\nu $.

Below, we present a characterization of weak containment in the sense of
Zimmer that will be convenient for our purposes. We need a short discussion
on ultrapowers of unitary representations first.

Let $\mathcal{U}$ be a free ultrafilter over an index set $I$ and let ${%
\mathcal{H}}$ be a Hilbert space. Set 
\begin{equation*}
{\mathcal{H}}^\mathcal{U}=\ell^{\infty}(I,{\mathcal{H}})/\{(\xi_j)_{j\in
I}\in \ell^\infty(I,{\mathcal{H}})\colon \lim_{j\to\mathcal{U}}\|\xi_j\|=0\},
\end{equation*}
endowed with the quotient norm. The class in ${\mathcal{H}}^\mathcal{U}$ of
a sequence $\xi\in\ell^\infty(I,{\mathcal{H}})$ is denoted by $[\xi]$. Then $%
{\mathcal{H}}^\mathcal{U}$ is a Hilbert space with respect to 
\begin{equation*}
\langle [\xi],[\eta]\rangle=\lim_{j\to\mathcal{U}}\langle \xi_j,\eta_j\rangle
\end{equation*}
for all $\xi,\eta\in \ell^\infty(I,{\mathcal{H}})$. If $\nu\colon G\to{%
\mathcal{U}}({\mathcal{H}})$ is a unitary representation of a discrete group 
$G$ on ${\mathcal{H}}$, then there is an induced representation $\nu^%
\mathcal{U}\colon G\to{\mathcal{U}}({\mathcal{H}}^\mathcal{U})$ given by $%
\nu^\mathcal{U}_g([\xi])=[(\nu_g(\xi_j))_{j\in I}]$ for all $g\in G$ and all 
$\xi\in\ell^\infty(I,{\mathcal{H}})$.

\begin{rem}
\label{rem:UltrapowHsSum} Adopt the notation from the discussion above. If $%
\nu_1$ and $\nu_2$ are unitary representations of $G$ on Hilbert spaces ${%
\mathcal{H}}_1$ and ${\mathcal{H}}_2$, respectively, it is easy to verify
that $(\nu_1\oplus\nu_2)^\mathcal{U}$ is naturally unitarily conjugate to $%
\nu_1^\mathcal{U}\oplus\nu_2^\mathcal{U}$. The analogous statement for
infinite direct sums is not true.
\end{rem}

The following result is probably known to the experts. Since we were not
able to find it in the literature, we provide a short proof.

\begin{prop}
\label{prop:charactZimmer} Let $G$ be a countable discrete group, and let $%
\mu \colon G\rightarrow {\mathcal{U}}({\mathcal{H}}_{\mu })$ and $\nu \colon
G\rightarrow {\mathcal{U}}({\mathcal{H}}_{\nu })$ be unitary
representations, where $\mathcal{H}_{\mu }$ is separable. Let $\mathcal{U}$
be a free ultrafilter on $\mathbb{N}$. Then $\mu \prec _{\mathrm{Z}}\nu $ if
and only if $\mu \subseteq \nu ^{\mathcal{U}}$.
\end{prop}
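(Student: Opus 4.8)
The plan is to prove the two implications separately, disposing of the easier direction $\mu\subseteq\nu^{\mathcal{U}}\Rightarrow\mu\prec_{\mathrm{Z}}\nu$ first and then constructing the intertwiner for the converse by a diagonal exhaustion argument. Throughout I use that, by definition of the ultrapower, $\langle\nu^{\mathcal{U}}_g[\xi],[\zeta]\rangle=\lim_{m\to\mathcal{U}}\langle\nu_g(\xi_m),\zeta_m\rangle$.

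For the direction $(\Leftarrow)$, assume $\varphi\colon\mathcal{H}_\mu\to\mathcal{H}_\nu^{\mathcal{U}}$ is an equivariant isometry. Given $\xi_1,\ldots,\xi_n\in\mathcal{H}_\mu$, a finite set $F\subseteq G$, and $\varepsilon>0$, I would set $\Xi_j=\varphi(\xi_j)$ and choose bounded representing sequences $(\eta_j^{(m)})_{m}$ with $\Xi_j=[(\eta_j^{(m)})_m]$. Since $\varphi$ preserves inner products and intertwines the actions, $\langle\mu_g\xi_j,\xi_k\rangle=\langle\nu^{\mathcal{U}}_g\Xi_j,\Xi_k\rangle=\lim_{m\to\mathcal{U}}\langle\nu_g\eta_j^{(m)},\eta_k^{(m)}\rangle$ for all $g,j,k$. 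As $F$ and the set of pairs $(j,k)$ are finite, the indices $m$ for which all of $|\langle\mu_g\xi_j,\xi_k\rangle-\langle\nu_g\eta_j^{(m)},\eta_k^{(m)}\rangle|<\varepsilon$ hold simultaneously form a finite intersection of members of $\mathcal{U}$, hence a nonempty set; choosing such an $m$ and putting $\eta_j=\eta_j^{(m)}$ yields the required vectors.

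For the direction $(\Rightarrow)$, I would fix an orthonormal basis $(\xi_j)_{j\in\mathbb{N}}$ of the separable space $\mathcal{H}_\mu$ and an increasing exhaustion $F_1\subseteq F_2\subseteq\cdots$ of $G$ by finite sets containing the identity with $\bigcup_m F_m=G$ (possible as $G$ is countable). For each $m$, weak containment applied to the data $(\xi_1,\ldots,\xi_m;\,F_m;\,1/m)$ produces $\eta_1^{(m)},\ldots,\eta_m^{(m)}\in\mathcal{H}_\nu$; set $\eta_j^{(m)}=0$ for $m<j$. Taking $g$ to be the identity shows $\|\eta_j^{(m)}\|\to1$, so each sequence $(\eta_j^{(m)})_m$ is bounded and defines $\Xi_j=[(\eta_j^{(m)})_m]\in\mathcal{H}_\nu^{\mathcal{U}}$. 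For fixed $g,j,k$ the estimate holds for all large $m$, whence $\langle\nu^{\mathcal{U}}_g\Xi_j,\Xi_k\rangle=\langle\mu_g\xi_j,\xi_k\rangle$; in particular (with $g$ the identity) $(\Xi_j)_j$ is orthonormal, so I would define an isometry $\varphi$ by $\varphi(\xi_j)=\Xi_j$ extended linearly and continuously.

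The main obstacle is verifying that $\varphi$ is \emph{genuinely} equivariant rather than merely equivariant up to the projection onto its range, since nothing a priori forces the image of $\varphi$ to be $\nu^{\mathcal{U}}$-invariant. The relations above only say that $\nu^{\mathcal{U}}_g\Xi_j-\varphi(\mu_g\xi_j)$ is orthogonal to every $\Xi_k$. To upgrade this to an equality I would compute the norm of the difference directly: writing $\varphi(\mu_g\xi_j)=\sum_k\langle\mu_g\xi_j,\xi_k\rangle\Xi_k$ and using orthonormality of $(\Xi_k)_k$, the cross term is $\langle\nu^{\mathcal{U}}_g\Xi_j,\varphi(\mu_g\xi_j)\rangle=\sum_k|\langle\mu_g\xi_j,\xi_k\rangle|^2=\|\mu_g\xi_j\|^2=1$, while $\|\nu^{\mathcal{U}}_g\Xi_j\|^2=\|\varphi(\mu_g\xi_j)\|^2=1$, so $\|\nu^{\mathcal{U}}_g\Xi_j-\varphi(\mu_g\xi_j)\|^2=1-2+1=0$. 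Hence $\nu^{\mathcal{U}}_g\varphi(\xi_j)=\varphi(\mu_g\xi_j)$ on basis vectors, and extending by linearity and continuity gives $\nu^{\mathcal{U}}_g\circ\varphi=\varphi\circ\mu_g$, which completes the proof.
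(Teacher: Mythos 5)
Your proof is correct, but it takes a genuinely different route from the paper: the paper does not write out an argument at all, instead citing the model theory of metric structures --- \L{}os' theorem for the ``if'' direction and countable saturation of ultrapowers for the ``only if'' direction. Your two implications are precisely an elementary unpacking of that machinery. The ultralimit computation $\langle\nu^{\mathcal{U}}_g[\xi],[\zeta]\rangle=\lim_{m\to\mathcal{U}}\langle\nu_g(\xi_m),\zeta_m\rangle$ together with the finite-intersection property of $\mathcal{U}$ is the special case of \L{}os' theorem being invoked, and your diagonal construction of the vectors $\Xi_j$ from the data $(\xi_1,\ldots,\xi_m;F_m;1/m)$ is exactly a hands-on realization of the countable type whose finite approximate satisfiability is the definition of $\mu\prec_{\mathrm{Z}}\nu$. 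What your approach buys is self-containedness and, importantly, visibility of the one subtle point that the citation hides: realizing the inner-product conditions only makes the map $\xi_j\mapsto\Xi_j$ equivariant \emph{up to the orthogonal projection onto its range}, and your Parseval computation
\[
\|\nu^{\mathcal{U}}_g\Xi_j-\varphi(\mu_g\xi_j)\|^2
=\|\nu^{\mathcal{U}}_g\Xi_j\|^2-2\,\mathrm{Re}\,\langle\nu^{\mathcal{U}}_g\Xi_j,\varphi(\mu_g\xi_j)\rangle+\|\varphi(\mu_g\xi_j)\|^2=1-2+1=0
\]
is exactly the missing step that upgrades this to genuine equivariance; identifying and closing that gap is the real content of the proposition. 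What the paper's approach buys is brevity and generality: saturation and \L{}os apply verbatim to far more elaborate structures, whereas your argument is tailored to Hilbert spaces. Two cosmetic remarks: your construction implicitly assumes $\mathcal{H}_\mu$ is infinite dimensional (a finite orthonormal basis makes the argument only easier), and the passage from all large $m$ to $\mathcal{U}$-almost all $m$ uses that a free ultrafilter on $\mathbb{N}$ contains all cofinite sets, which is worth saying explicitly.
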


\begin{proof}
Both directions are easy, and follow, for example, by combining \L {}os'
theorem (for the ``if'' implication) and countable saturation of ultrapowers
(for the ``only if'' implication). See, for example, Sections~2.3 and~4.3 in 
\cite{farah_model_2017}.
\end{proof}

In our next preparatory result, the case when $\mu$ is the trivial
representation is well known.

\begin{prop}
\label{prop:IrredRepWkCont} Let $G$ be a countable discrete group, and let $%
\mu \colon G\rightarrow {\mathcal{U}}({\mathcal{H}}_{\mu })$ and $\nu
_{j}\colon G\rightarrow {\mathcal{U}}({\mathcal{H}}_{j})$, for $j=1,\ldots
,n $, be unitary representations. Assume that $\mu $ is irreducible and
finite-dimensional, and that $\mu \prec _{\mathrm{Z}}\nu _{1}\oplus \cdots
\oplus \nu _{n}$. Then there exists $k\in \{1,\ldots ,n\}$ such that $\mu
\prec _{\mathrm{Z}}\nu _{k}$.
\end{prop}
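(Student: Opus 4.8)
The plan is to reduce everything to the ultrapower characterization of \autoref{prop:charactZimmer} and then exploit irreducibility through Schur's lemma. Since $\mu$ is finite-dimensional, $\mathcal{H}_\mu$ is separable, so \autoref{prop:charactZimmer} applies and the hypothesis $\mu \prec_{\mathrm{Z}} \nu_1 \oplus \cdots \oplus \nu_n$ becomes a genuine unitary containment $\mu \subseteq (\nu_1 \oplus \cdots \oplus \nu_n)^{\mathcal{U}}$. Applying \autoref{rem:UltrapowHsSum} inductively to the finite direct sum, I would identify $(\nu_1 \oplus \cdots \oplus \nu_n)^{\mathcal{U}}$ with $\nu_1^{\mathcal{U}} \oplus \cdots \oplus \nu_n^{\mathcal{U}}$, so that $\mu \subseteq \nu_1^{\mathcal{U}} \oplus \cdots \oplus \nu_n^{\mathcal{U}}$. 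It is crucial here that the sum is finite, since \autoref{rem:UltrapowHsSum} explicitly fails for infinite direct sums.

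Next I would fix an isometry $V\colon \mathcal{H}_\mu \to \bigoplus_{k=1}^n \mathcal{H}_k^{\mathcal{U}}$ intertwining $\mu$ and $\bigoplus_k \nu_k^{\mathcal{U}}$, and write $V = (V_1, \ldots, V_n)$ with $V_k\colon \mathcal{H}_\mu \to \mathcal{H}_k^{\mathcal{U}}$ intertwining $\mu$ and $\nu_k^{\mathcal{U}}$. For each $k$, the positive operator $V_k^* V_k$ on $\mathcal{H}_\mu$ commutes with every $\mu_g$, so by Schur's lemma (using that $\mu$ is irreducible) there is a scalar $c_k \geq 0$ with $V_k^* V_k = c_k\,\mathrm{id}$. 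Since $V$ is an isometry, $\sum_{k=1}^n c_k\,\mathrm{id} = V^*V = \mathrm{id}$, whence $\sum_k c_k = 1$ and in particular some $c_k$ is strictly positive. For that index $k$, the operator $c_k^{-1/2} V_k$ is an isometry intertwining $\mu$ and $\nu_k^{\mathcal{U}}$, which shows $\mu \subseteq \nu_k^{\mathcal{U}}$. A final application of \autoref{prop:charactZimmer} then yields $\mu \prec_{\mathrm{Z}} \nu_k$, as desired.

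The main obstacle---really the only nontrivial point---is the middle step: recognizing that irreducibility allows one to pass from containment in a finite direct sum to containment in a single summand. This is where Schur's lemma enters, applied not to $\mu$ directly but to the compressions $V_k^* V_k$ of the intertwining isometry. I would stress that the finite-dimensionality of $\mu$ is used only to guarantee separability of $\mathcal{H}_\mu$, so that \autoref{prop:charactZimmer} is available; the decomposition argument itself needs only the irreducibility of $\mu$ together with the finiteness of $n$.
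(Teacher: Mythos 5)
Your proof is correct and takes essentially the same route as the paper's: both use \autoref{prop:charactZimmer} to upgrade weak containment to a genuine containment $\mu \subseteq (\nu_1\oplus\cdots\oplus\nu_n)^{\mathcal{U}}$, identify the ultrapower of the finite sum with the sum of ultrapowers via \autoref{rem:UltrapowHsSum}, and then apply Schur's lemma to a nonzero component of the intertwining isometry. Your write-up simply makes explicit the computation $V_k^*V_k = c_k\,\mathrm{id}$ (and the normalization $c_k^{-1/2}V_k$) that the paper compresses into the phrase that $\varphi_k$ is ``a scalar multiple of an isometry.''
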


\begin{proof}
Without loss of generality we can assume that $\mathcal{H}_{\mu }$ is
separable. Let $\mathcal{U}$ be any free ultrafilter on $\mathbb{N}$. Use %
\autoref{prop:charactZimmer} to choose an equivariant isometry $\varphi
\colon {\mathcal{H}}_{\mu }\rightarrow ({\mathcal{H}}_{1}\oplus \cdots
\oplus {\mathcal{H}}_{n})^{\mathcal{U}}$ witnessing the fact that $\mu \prec
_{\mathrm{Z}}\nu _{1}\oplus \cdots \oplus \nu _{n}$. We identify $({\mathcal{%
H}}_{1}\oplus \cdots \oplus {\mathcal{H}}_{n})^{\mathcal{U}}$ equivariantly
with ${\mathcal{H}}_{1}^{\mathcal{U}}\oplus \cdots \oplus {\mathcal{H}}_{n}^{%
\mathcal{U}}$ in a canonical way via \autoref{rem:UltrapowHsSum}. For $%
j=1,\ldots ,n$, we denote by $\varphi _{j}\colon {\mathcal{H}}_{\mu
}\rightarrow {\mathcal{H}}_{j}^{\mathcal{U}}$ the composition of $\varphi $
with the canonical projection onto ${\mathcal{H}}_{j}^{\mathcal{U}}$.

Since $\varphi $ is nonzero, there exists $k\in \{ 1,\ldots ,n\}$ such that $%
\varphi _k$ is nonzero. Since $\mu $ is irreducible and finite-dimensional,
by Schur's lemma $\varphi_k$ is a scalar multiple of an isometry. This
concludes the proof. 

\end{proof}

The main use of the $\sigma$-Rokhlin property is given by the combination of
the next proposition with \autoref{prop:sRpCocConj} below.

\begin{prop}
\label{prop:sRpWkCtmnt} Let $G$ be a countable discrete group, let $D$ be a
C*-algebra or a von Neumann algebra, let $\phi \in \mathcal{S}(D)$ be a
(normal) state on it, and let $G\curvearrowright ^{\sigma }X$ be an action
of $G$ on a countable set $X$. If $\alpha \colon G\rightarrow {\mathrm{Aut}}%
(D)$ is a $\phi $-preserving action with the $\sigma $-Rokhlin property,
then 
\begin{equation*}
u_{\sigma }\oplus 1_{G}\prec _{\mathrm{Z}}\kappa (\alpha )\ \ \mbox{ and }\
\ u_{\sigma }\prec _{\mathrm{Z}}\kappa (\alpha )^{(0)}.
\end{equation*}
\end{prop}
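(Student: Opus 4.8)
The plan is to show that the $\sigma$-Rokhlin projections $p_x^{(n)}$ furnish, in the ultrapower $\ker(\alpha)^{\mathcal{U}}$ of the Koopman representation, an equivariant copy of $u_\sigma \oplus 1_G$. By \autoref{prop:charactZimmer} (together with the reduction to separable subrepresentations noted after \autoref{df:wkcont}), it suffices to produce an equivariant isometry from $\ell^2(X) \oplus \mathbb{C}$ into $\kappa(\alpha)^{\mathcal{U}}$. The reduced statement $u_\sigma \prec_{\mathrm{Z}} \kappa(\alpha)^{(0)}$ will then follow by discarding the $1_G$-summand (the constant vector), using that $\kappa(\alpha) \cong \kappa(\alpha)^{(0)} \oplus 1_G$.

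First I would fix the value $t = \phi(p_x^{(n)}) \in \mathbb{Q} \cap (0,1)$, which is independent of $x$ and $n$ by Condition (4). For each $x \in X$ I would set $\xi_x = [(\iota_\phi(p_x^{(n)}))_{n}] \in \mathcal{H}_\phi^{\mathcal{U}}$, the class of the sequence of GNS-images of the projections. Conditions (3) and (4) are designed precisely to compute the inner products: for $x \neq y$ one has $\langle \iota_\phi(p_x^{(n)}), \iota_\phi(p_y^{(n)})\rangle = \phi(p_x^{(n)} p_y^{(n)}) = t^2$, while $\|\iota_\phi(p_x^{(n)})\|^2 = \phi(p_x^{(n)}) = t$. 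So the Gram matrix of the $\xi_x$ in the ultrapower is $t^2$ off the diagonal and $t$ on the diagonal, i.e. it equals $t^2 J + (t - t^2) I$ where $J$ is all-ones. To split off the trivial representation, I would subtract the ``expected value'': set $\eta_x = \xi_x - t \cdot \zeta$, where $\zeta = [(\iota_\phi(1_D))_n]$ is the image of the unit (the $1_G$-vector), and rescale. A direct computation gives $\langle \eta_x, \eta_y\rangle = 0$ for $x \neq y$ and $\langle \eta_x, \eta_x\rangle = t - t^2 = t(1-t)$, so the normalized vectors $\tilde\eta_x = (t(1-t))^{-1/2}\eta_x$ form an orthonormal family, while each $\eta_x$ is orthogonal to $\zeta$.

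The crucial point is \textbf{equivariance}. Here Condition (1) does the work: since $\|\overline{\alpha}^\phi_g(p_x^{(n)}) - p_{g\cdot x}^{(n)}\|_{2,\phi} \to 0$ as $n \to \mathcal{U}$, in the ultrapower we get $\kappa(\alpha)^{\mathcal{U}}_g(\xi_x) = [(\iota_\phi(\overline{\alpha}^\phi_g(p_x^{(n)})))_n] = [(\iota_\phi(p_{g\cdot x}^{(n)}))_n] = \xi_{g\cdot x}$. Because $\zeta$ is fixed by the representation (as $\alpha_g(1_D) = 1_D$), the same relation passes to the centered vectors: $\kappa(\alpha)^{\mathcal{U}}_g(\eta_x) = \eta_{g\cdot x}$, and hence $\kappa(\alpha)^{\mathcal{U}}_g(\tilde\eta_x) = \tilde\eta_{g\cdot x}$. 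Thus the map $\delta_x \mapsto \tilde\eta_x$ defines an isometry $\ell^2(X) \to \mathcal{H}_\phi^{\mathcal{U}}$ intertwining $u_\sigma$ with $\kappa(\alpha)^{\mathcal{U}}$ — but I must check the action conventions match: $u_\sigma(g)\delta_x = \delta_{g\cdot x}$ under the representation convention of \autoref{df:UsGenBernoulli}, which agrees with the ultrapower computation, so the intertwining holds. Adjoining $\zeta$ (orthogonal to all $\tilde\eta_x$ and fixed by the action) extends this to an equivariant isometry of $\ell^2(X)\oplus\mathbb{C}$, giving $u_\sigma \oplus 1_G \subseteq \kappa(\alpha)^{\mathcal{U}}$, i.e. $u_\sigma \oplus 1_G \prec_{\mathrm{Z}} \kappa(\alpha)$.

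The \textbf{main obstacle} I anticipate is bookkeeping around the ultrapower rather than any deep difficulty: one must verify that the sequences $(\iota_\phi(p_x^{(n)}))_n$ are genuinely bounded in $\mathcal{H}_\phi$ (they are, since $\|\iota_\phi(p_x^{(n)})\| = t^{1/2} \leq 1$) so that they define legitimate elements of $\mathcal{H}_\phi^{\mathcal{U}}$, and that the linear map $\delta_x \mapsto \tilde\eta_x$ extends to an isometry on all of $\ell^2(X)$ — this is automatic once the family $\{\tilde\eta_x\}$ is orthonormal. A secondary subtlety is that $X$ may be infinite, so $\ell^2(X)$ is infinite-dimensional; but weak containment in the sense of Zimmer only tests finitely many vectors and finitely many group elements at a time, and an equivariant isometry defined on an orthonormal basis automatically extends, so no issue arises. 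The use of Condition (2) is not strictly needed for this proposition (it governs the relation between $\alpha$ and the algebra rather than the representation), and I would not invoke it here.
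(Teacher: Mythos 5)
Your proof is correct and follows essentially the same route as the paper: the paper also forms the centered vectors $\iota_\phi(p_x^{(n)})-\phi(p_x^{(n)})\iota_\phi(1_D)$ (written there as $\xi_x^{(n)}=r-tp_x^{(n)}$), uses conditions (1), (3), (4) for the same Gram-matrix and equivariance computations, and concludes via \autoref{prop:charactZimmer}. The only cosmetic difference is bookkeeping: the paper builds exact isometries $\varphi_n\colon\ell^2(X)\to\mathcal{H}_\phi$ at each finite stage and then passes to the ultrapower, whereas you center and normalize directly inside $\mathcal{H}_\phi^{\mathcal{U}}$ — and your observation that the $\eta_x$ are orthogonal to $\zeta$ is exactly what legitimizes the reduced statement, so no cancellation of the $1_G$ summand is ever actually needed.
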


\begin{proof}
It suffices to show the second part, since $\kappa(\alpha)\cong
\kappa(\alpha)^{(0)}\oplus 1_G$. Suppose that $\alpha$ has the $\sigma$%
-Rokhlin property, and let $\{p_x^{(n)}\colon x\in X, n\in{\mathbb{N}}\}$ be
a family of projections in $\overline{D}^\phi$ as in \autoref{df:sRp}.
Choose positive integers $r,t\in{\mathbb{N}}$ with $r<t$ satisfying $%
\phi(p_x^{(n)})=r/t$ for all $x\in X$ and all $n\in{\mathbb{N}}$. For $x\in
X $ and $n\in{\mathbb{N}}$, set $\xi_x^{(n)}=r-tp_x^{(n)}\in {\mathcal{H}}%
_\phi $. Since $\phi(r-tp_x^{(n)})=0$, it follows that $\xi_x^{(n)}$ belongs
to the orthogonal complement of the unit. Moreover, for $n\in{\mathbb{N}}$
and $x,y\in X$ with $x\neq y$, we have 
\begin{align*}
\langle \xi_x^{(n)}, \xi_y^{(n)}\rangle&=\phi((r-tp_x^{(n)})^*(r-tp_y^{(n)}))
\\
&= r^2-rt\phi(p_x^{(n)})-rt\phi(p_y^{(n)})+t^2\phi(p_x^{(n)}p_y^{(n)})=0.
\end{align*}
In particular, $\xi_x^{(n)}\perp \xi_y^{(n)}$ whenever $x\neq y$. Similarly, 
\begin{align*}
\langle \xi_x^{(n)}, \xi_x^{(n)}\rangle=
r^2-rt\phi(p_x^{(n)})-rt\phi(p_x^{(n)})+t^2\phi(p_x^{(n)})=r(t-r).
\end{align*}

Additionally, it is immediate that $\lim\limits_{n\to\infty}\|(\kappa(%
\alpha))_g(\xi_x^{(n)})-\xi_{g\cdot x}^{(n)}\|_{{\mathcal{H}}_\phi}=0$ for
all $g\in G$ and all $x\in X$.

Fix $n\in{\mathbb{N}}$. Define a map $\varphi_n\colon \ell^2(X)\to {\mathcal{%
H}}_\phi$ by $\varphi_n(\delta_x)=\xi_x^{(n)}/\sqrt{r(t-r)}$ for all $x\in X$%
. Then the maps $(\varphi_n)_{n\in{\mathbb{N}}}$ are asymptotically
equivariant unitaries, witnessing the fact that $u_\sigma$ is a
subrepresentation of the ultrapower of $\kappa(\alpha)^{(0)}$. By \autoref%
{prop:charactZimmer}, the proof is finished.
\end{proof}

The following definition is standard.

\begin{df}
\label{df:CocConj} Let $G$ be a countable discrete group, let $A$ be a
C*-algebra, and let $\alpha ,\beta \colon G\rightarrow {\mathrm{Aut}}(A)$ be
actions. We say that $\alpha $ and $\beta $ are \emph{cocycle conjugate}, if
there exist an automorphism $\theta \in {\mathrm{Aut}}(A)$ and a function $%
u\colon G\rightarrow {\mathcal{U}}(M(A))$ satisfying 
\[ u_{gh}=u_g\alpha_g(u_h) \ \mbox{ and } \ \beta_g=\theta \circ(\Ad(u_g)\circ\alpha_g)\circ\theta^{-1}\]
for all $g,h\in G$.
for all $g,h\in G$. 
\end{df}

Let $\alpha,\beta\colon G\to{\mathrm{Aut}}(D)$ be actions of a discrete
group $G$ on a C*-algebra $D$. If $\alpha$ and $\beta$ are cocycle
conjugate, there is in general no relationship between $\kappa(\alpha)$ and $%
\kappa(\beta)$, even if they both preserve the same trace. This can be seen,
for example, by letting $\alpha\colon{\mathbb{Z}}_2\to {\mathrm{Aut}}(M_2)$
be the trivial action, and $\beta\colon{\mathbb{Z}}_2\to{\mathrm{Aut}}(M_2)$
be the inner action determined by the order two unitary $\left(
                            \begin{array}{cc}
                              0 & 1 \\
                              1 & 0 \\
                            \end{array}
                          \right)$. 
(In this case, with respect to the unique trace, $\kappa(\alpha)$
is conjugate to $\bigoplus_{j=1}^4 1_{{\mathbb{Z}}_2}$, while $\kappa(\beta)$
is conjugate to $\bigoplus_{j=1}^2 \lambda_{{\mathbb{Z}}_2}$.) In
particular, if $\kappa(\alpha)$ contains a given unitary representation, we
cannot conclude that so does $\kappa(\beta)$. A special case when this is
indeed true is given in the following proposition.

We remark that the following proposition is the only instance in this work
where traces are really necessary, specifically when showing that $\tau
\circ \theta $ is $\beta $-invariant.

\begin{prop}
\label{prop:sRpCocConj} Let $G$ be a countable discrete group, let $A$ be a
C*-algebra, let $\tau \in \mathcal{S}(A)$ be a tracial state, and let $%
\alpha ,\beta \colon G\rightarrow {\mathrm{Aut}}(A)$ be $\tau $-preserving
actions which are cocycle conjugate, and let $\theta \in {\mathrm{Aut}}(A)$
and $u\colon G\rightarrow {\mathcal{U}}(M(A))$ be as in \autoref{df:CocConj}%
. Let $G\curvearrowright ^{\sigma }X$ be an action, and suppose that $\alpha 
$ has the $\sigma $-Rokhlin property with respect to $\tau $. Then $\tau
\circ \theta ^{-1}$ is $\beta $-invariant, and $\beta $ has the $\sigma $%
-Rokhlin property with respect to it. Moreover, 
\begin{equation*}
u_{\sigma }\oplus 1_{G}\prec _{\mathrm{Z}}\kappa _{\tau }(\beta )\ \ 
\mbox{
and }\ \ u_{\sigma }\prec _{\mathrm{Z}}\kappa _{\tau }^{(0)}(\beta ).
\end{equation*}
\end{prop}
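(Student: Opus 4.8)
The plan is to reduce the whole statement to the previously established \autoref{prop:sRpWkCtmnt}, applied not to $\beta$ and $\tau$ directly but to $\beta$ together with the trace $\tau' := \tau\circ\theta^{-1}$, after checking that $\beta$ has the $\sigma$-Rokhlin property with respect to $\tau'$. Throughout I would write $\gamma_g := \Ad(u_g)\circ\alpha_g$, so that the cocycle identity $u_{gh}=u_g\alpha_g(u_h)$ makes $\gamma$ an action and, by \autoref{df:CocConj}, $\beta_g = \theta\circ\gamma_g\circ\theta^{-1}$.

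First I would show that $\tau' = \tau\circ\theta^{-1}$ is $\beta$-invariant. Writing $\theta^{-1}\circ\beta_g = \gamma_g\circ\theta^{-1} = \Ad(u_g)\circ\alpha_g\circ\theta^{-1}$ and composing with $\tau$, the key point is that $\tau$, being a \emph{trace}, is invariant under the inner automorphism $\Ad(u_g)$ even though $u_g$ lies in the multiplier algebra; this is exactly the step flagged in the remark preceding the statement as requiring traces. Combined with $\alpha$-invariance of $\tau$, this gives $\tau'\circ\beta_g = \tau\circ\alpha_g\circ\theta^{-1} = \tau'$. In particular $\gamma$ then also preserves $\tau$.

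Next I would produce the Rokhlin projections for $\beta$ with respect to $\tau'$. Since $\tau'\circ\theta = \tau$, the automorphism $\theta$ induces a trace-preserving normal $*$-isomorphism $\widetilde\theta\colon (\overline{A}^\tau,\tau)\to(\overline{A}^{\tau'},\tau')$ which is isometric for the respective $\|\cdot\|_2$-norms. Given projections $p_x^{(n)}\in\overline{A}^\tau$ witnessing the $\sigma$-Rokhlin property of $\alpha$, I set $q_x^{(n)} := \widetilde\theta(p_x^{(n)})$. Conditions~(3) and~(4) of \autoref{df:sRp} for $(\beta,\tau')$ transport immediately because $\widetilde\theta$ is a trace-preserving homomorphism, and condition~(2) transports because $\widetilde\theta^{-1}(a)=\theta^{-1}(a)\in A$ for $a\in A$, reducing it to condition~(2) for $\alpha$. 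The real work is condition~(1): using $\overline{\beta}^{\tau'}_g\circ\widetilde\theta = \widetilde\theta\circ\overline{\gamma}^\tau_g$ together with the isometry property of $\widetilde\theta$, it reduces to $\|u_g\,\overline{\alpha}^\tau_g(p_x^{(n)})\,u_g^*-p_{g\cdot x}^{(n)}\|_{2,\tau}\to0$. I would split this into a conjugation error plus a commutator error: the first equals $\|\overline{\alpha}^\tau_g(p_x^{(n)})-p_{g\cdot x}^{(n)}\|_{2,\tau}$ since conjugation by the unitary $u_g$ is $\|\cdot\|_{2,\tau}$-isometric (trace) and tends to $0$ by condition~(1) for $\alpha$; the second equals $\|u_g\, p_{g\cdot x}^{(n)}-p_{g\cdot x}^{(n)}\, u_g\|_{2,\tau}$.

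The main obstacle I anticipate is this last commutator term, because condition~(2) of \autoref{df:sRp} only asserts asymptotic commutation of the $p_x^{(n)}$ with elements of $A$, whereas $u_g\in M(A)$. I would handle it by viewing $u_g$ as an element of $\overline{A}^\tau$ acting on $\mathcal{H}_\tau$, and then extending the commutation estimate from $A$ to $\overline{A}^\tau$ by Kaplansky density, using the trace inequalities $\|c\,p\|_{2,\tau}\le\|c\|_{2,\tau}$ and $\|p\,c\|_{2,\tau}\le\|c\|_{2,\tau}$, valid for any projection $p$. This completes the verification that $\beta$ has the $\sigma$-Rokhlin property with respect to $\tau'$. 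Applying \autoref{prop:sRpWkCtmnt} to $\beta$ and $\tau'$ then yields $u_{\sigma}\oplus 1_{G}\prec _{\mathrm{Z}}\kappa _{\tau'}(\beta)$ and $u_{\sigma}\prec _{\mathrm{Z}}\kappa _{\tau'}^{(0)}(\beta)$, which is the asserted conclusion with the Koopman representation formed with respect to the $\beta$-invariant trace $\tau' = \tau\circ\theta^{-1}$ just constructed (this coincides with $\tau$ exactly when $\tau\circ\theta=\tau$, e.g. whenever $A$ has a unique trace, as in the intended applications).
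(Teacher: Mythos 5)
Your proposal follows the same route as the paper's own proof in everything except the last step: you transport the Rokhlin projections via $q_x^{(n)}=\theta(p_x^{(n)})$, use traciality to absorb the cocycle unitaries $u_g$, use asymptotic centrality to control the commutator term, and then invoke \autoref{prop:sRpWkCtmnt} for $\beta$ and $\tau\circ\theta^{-1}$. The details you supply are correct; in particular, your Kaplansky-density argument extending condition~(2) of \autoref{df:sRp} from $A$ to the unit ball of $\overline{A}^{\tau}$, needed because $u_g$ lies in $M(A)$ rather than $A$, is precisely what is hidden behind the paper's phrase ``using asymptotic centrality of these projections, it is easy to verify''.

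The divergence is at the very end, and there your caution corrects an error in the paper rather than leaving a gap in your argument. The paper concludes by asserting that $\kappa_{\tau}(\beta)$ and $\kappa_{\tau\circ\theta^{-1}}(\beta)$ are unitarily equivalent; this is false in general. The natural unitary $\mathcal{H}_{\tau}\to\mathcal{H}_{\tau\circ\theta^{-1}}$ induced by $\theta$ intertwines $\kappa_{\tau\circ\theta^{-1}}(\beta)$ with $\kappa_{\tau}(\theta^{-1}\circ\beta\circ\theta)=\kappa_{\tau}(\Ad(u)\circ\alpha)$, not with $\kappa_{\tau}(\beta)$, and in fact the proposition as literally stated fails. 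Indeed, take $G=\mathbb{F}_2$, let $W=\{0,1\}^{G}$ carry the Bernoulli measure, let $A=C(W)\oplus C(W)$, let $\alpha$ act by the shift on the first summand and trivially on the second, let $\theta$ be the flip and $u\equiv 1$ (so that $\beta$ acts trivially on the first summand and by the shift on the second), let $\tau$ be integration against the Bernoulli measure on the second summand, and let $\sigma$ be the trivial action on a one-point set. Then $\overline{\alpha}^{\tau}$ is the trivial action on $L^{\infty}(W)$, so $\alpha$ has the $\sigma$-Rokhlin property with respect to $\tau$ (any fixed projection of trace $1/2$ works), while $\kappa_{\tau}^{(0)}(\beta)$ is the reduced Koopman representation of the Bernoulli shift of $\mathbb{F}_2$; the latter is weakly contained in $\lambda_{G}$, so it cannot Zimmer-contain $u_{\sigma}=1_{G}$, as $G$ is nonamenable. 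Thus what you prove, namely the conclusion with $\kappa_{\tau\circ\theta^{-1}}(\beta)$ in place of $\kappa_{\tau}(\beta)$, is the correct statement, and it is also all that the paper's argument genuinely establishes; the stated version requires the additional hypothesis $\tau\circ\theta=\tau$. That hypothesis is automatic in the paper's main applications, where the algebras involved (built from $\mathcal{Z}$, a finite strongly self-absorbing algebra, or $\mathcal{R}$) have a unique trace, but the proof of \autoref{thm:MainTool} for a general tracial $D$ inherits the same issue.
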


\begin{proof}
Find projections $\{p_{x}^{(n)}\colon x\in X,n\in {\mathbb{N}}\}$ as in %
\autoref{df:sRp}, and set $q_{x}^{(n)}=\theta (p_{x}^{(n)})$ for $x\in X$
and $n\in {\mathbb{N}}$. Using asymptotic centrality of these projections,
it is easy to verify that the family $\{q_{x}^{(n)}\colon x\in X,n\in {%
\mathbb{N}}\}$ witnesses the fact that $\beta $ has the $\sigma $-Rokhlin
property with respect to $\tau \circ \theta ^{-1}$.

By \autoref{prop:sRpWkCtmnt}, we have 
\begin{equation*}
u_{\sigma }\oplus 1_{G}\prec _{\mathrm{Z}}\kappa _{\tau \circ \theta
^{-1}}(\beta )\ \ \mbox{ and }\ \ u_{\sigma }\prec _{\mathrm{Z}}\kappa
_{\tau \circ \theta ^{-1}}^{(0)}(\beta ).
\end{equation*}%
Since the Koopman representations of $\beta $ with respect to $\tau $ and $%
\tau \circ \theta ^{-1}$ are unitarily equivalent, the result follows.
\end{proof}

\section{Actions induced by finite subquotients}

In this section, we specialize to Bernoulli shifts associated with a
particular class of actions which are constructed from quasi-regular
representations (see \autoref{df:qrrep}). If $H$ is a subgroup of a discrete
group $G$, then the quasiregular representation $\lambda_{G/H}$ induces a
unital homomorphism $\pi_H\colon C^*(G) \to {\mathcal{B}}(\ell^2(G/H))$, by
universality of $C^*(G)$. When $H$ is normal in $G$, then the image of $%
\pi_H $ is $C^*_{\mathrm{r}}(G/H)$.

\begin{df}
\label{df:separated} Let $F$ be a discrete group, let ${\mathcal{P}}%
\subseteq {\mathbb{N}}$ be a subset of relatively prime numbers. A family $%
\{H_p\}_{p\in{\mathcal{P}}}$ of subgroups of $F$ is said to be \emph{%
separated} if it satisfies the following properties:

\begin{enumerate}
\item[(S.1)] $[F:H_p]=p$ for all $p\in {\mathcal{P}}$, and

\item[(S.2)] for $p\in {\mathcal{P}}$ and $Q\subseteq {\mathcal{P}}$, if $%
\bigcap\limits_{q\in Q} \ker(\pi_{H_{q}})\subseteq \ker(\pi_{H_p})$, then $%
p\in Q$.
\end{enumerate}

We say that $F$ is \emph{separated over $\mathcal{P}$} if it contains a
separated family of subgroups indexed over ${\mathcal{P}}$. Finally, we say
that $F$ is \emph{infinitely separated} if it is separated over an infinite
set ${\mathcal{P}}$ of relatively prime numbers.
\end{df}

Observe that we do not require the subgroups $H_p$ in the definition above
to be normal. Conditions (S.1) and (S.2) seem rather restrictive, but we can
always take ${\mathcal{P}}=\{1\}$ and $H_1=F$. This relatively trivial
choice will be enough to prove Theorem~\ref{thm:B}. Not every (nonamenable)
group admits nontrivial choices, but, as we show below, the free group $%
\mathbb{F}_\infty$ does.

\begin{lma}
\label{lma:propF} The free group $\mathbb{F}_\infty$ is infinitely separated.
\end{lma}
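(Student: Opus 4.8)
The plan is to exhibit, for each prime $p$ in an infinite set $\mathcal{P}$ of primes (which are automatically pairwise relatively prime), an explicit subgroup $H_p \leq \mathbb{F}_\infty$ of index $p$, and then to verify conditions (S.1) and (S.2). The natural source of index-$p$ subgroups is surjective homomorphisms onto finite groups of order $p$: since $p$ is prime, a surjection $\varphi_p \colon \mathbb{F}_\infty \to \mathbb{Z}/p\mathbb{Z}$ gives $H_p = \varphi_p^{-1}(0)$ with $[\mathbb{F}_\infty : H_p] = p$, settling (S.1). Crucially these $H_p$ are \emph{normal}, so $\ker(\pi_{H_p})$ is the kernel of the composite $C^*(\mathbb{F}_\infty) \to C^*_{\mathrm{r}}(\mathbb{Z}/p\mathbb{Z})$, i.e.\ the kernel of the quasiregular (here regular) representation of the quotient $\mathbb{Z}/p\mathbb{Z}$. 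Because $\mathbb{F}_\infty$ is free on countably many generators, I have complete freedom in choosing the $\varphi_p$, and I would exploit this freedom to make the kernels as ``independent'' as possible.

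The heart of the matter is condition (S.2), which asks that no $\ker(\pi_{H_p})$ contain $\bigcap_{q \in Q}\ker(\pi_{H_q})$ unless $p \in Q$. Since each $H_p$ is normal, $\ker(\pi_{H_p})$ detects precisely the failure of a representation to factor through $\mathbb{F}_\infty/H_p \cong \mathbb{Z}/p\mathbb{Z}$. I would choose the surjections so that the maps $\{\varphi_q\}_{q \in \mathcal{P}}$ assemble, on finitely many generators, into a surjection onto the product $\prod_{q}\mathbb{Z}/q\mathbb{Z}$; concretely, using distinct free generators (or batches of generators) to realize each quotient independently. Then $\bigcap_{q \in Q}\ker(\pi_{H_q})$ corresponds to the representations of $\mathbb{F}_\infty$ that are weakly contained in $\bigoplus_{q \in Q}\lambda_{\mathbb{F}_\infty / H_q}$, equivalently that factor (after passing to the regular representation) through $\prod_{q \in Q}\mathbb{Z}/q\mathbb{Z}$. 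The containment $\bigcap_{q \in Q}\ker(\pi_{H_q}) \subseteq \ker(\pi_{H_p})$ would then force the one-dimensional characters defining $\lambda_{\mathbb{F}_\infty/H_p}$ to be weakly contained in those coming from $Q$; by coprimality of the orders and independence of the chosen generators, a nontrivial character of order $p$ cannot arise this way unless $p \in Q$.

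The cleanest way to argue (S.2) is representation-theoretically rather than through abstract $C^*$-kernels. I would translate $\bigcap_{q \in Q}\ker(\pi_{H_q}) \subseteq \ker(\pi_{H_p})$ into a weak-containment statement and then locate an irreducible constituent that separates $p$ from $Q$. Each $\lambda_{\mathbb{F}_\infty/H_p}$ decomposes as $\bigoplus_{\chi}\chi$ over the characters $\chi$ of $\mathbb{Z}/p\mathbb{Z}$ pulled back to $\mathbb{F}_\infty$; the nontrivial ones have order exactly $p$ as characters of $\mathbb{F}_\infty$. Using \autoref{prop:IrredRepWkCont}, if such a one-dimensional $\chi$ were weakly contained in $\bigoplus_{q \in Q}\lambda_{\mathbb{F}_\infty/H_q}$, it would be weakly contained in a single $\lambda_{\mathbb{F}_\infty/H_q}$, hence (being finite-dimensional and irreducible) actually contained in it, making $\chi$ a character factoring through $\mathbb{Z}/q\mathbb{Z}$. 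An order-$p$ character cannot factor through $\mathbb{Z}/q\mathbb{Z}$ when $p \neq q$ are coprime, so $p \in Q$.

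The main obstacle I anticipate is arranging the generators so that the characters genuinely have the claimed independence, and correctly passing between the $C^*$-kernel formulation of (S.2) and the weak-containment formulation. One must be careful that $\ker(\pi_{H_p})$ for nonnormal-looking data is really controlled by the quotient $\mathbb{Z}/p\mathbb{Z}$; choosing the $H_p$ normal removes this difficulty, and choosing $\mathcal{P}$ to be an infinite set of distinct primes makes the relative-primeness hypothesis automatic. The remaining work — decomposing each $\lambda_{\mathbb{F}_\infty/H_p}$ into characters and invoking \autoref{prop:IrredRepWkCont} together with the elementary fact that an order-$p$ character cannot factor through a coprime cyclic quotient — is routine, so I would present it briefly.
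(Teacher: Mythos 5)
Your choice of subgroups is essentially the paper's (kill all free generators except $x_p$, sending $x_p$ to a generator of $\mathbb{Z}/p\mathbb{Z}$), and (S.1) is indeed immediate; the problem is your verification of (S.2), which has a genuine gap. You invoke \autoref{prop:IrredRepWkCont} to pass from weak containment of a character $\chi$ in $\bigoplus_{q\in Q}\lambda_{\mathbb{F}_\infty/H_q}$ to weak containment in a \emph{single} summand. That proposition covers only \emph{finite} direct sums, whereas in \autoref{df:separated} the set $Q\subseteq\mathcal{P}$ is an arbitrary, possibly infinite, subset --- and the infinite case is exactly the one needed later, since the uncountably many actions come from infinitely many infinite subsets of $\mathcal{P}$. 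The finite-sum hypothesis is not a technicality: the splitting statement is false for infinite sums. For instance, let $\chi$ be the order-$p$ character with $\chi(x_1)=e^{2\pi i/p}$ and $\chi(x_n)=1$ for $n\geq 2$, and for each prime $q> p$ let $\psi_q$ be the order-$q$ character with $\psi_q(x_1)=e^{2\pi i\lfloor q/p\rfloor/q}$ and $\psi_q(x_n)=1$ otherwise. Then $\psi_q\to\chi$ pointwise, so $\chi$ is weakly contained in $\bigoplus_{q}\lambda_{\mathbb{F}_\infty/\ker\psi_q}$, yet it is weakly contained in no single summand. Worse, this example refutes what your argument would actually prove: after the reduction to one $q$ you use only coprimality, never the independence of the generators, so your argument would establish (S.2) for \emph{any} family of normal subgroups of coprime prime indices. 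That is false: taking $H'_q=\ker\psi_q$ and $H'_p=\ker\chi$ (kernels of the induced homomorphisms onto $\mathbb{Z}/q\mathbb{Z}$ and $\mathbb{Z}/p\mathbb{Z}$), every power $\chi^j$ is a pointwise limit of the characters $\psi_q^j$, which are constituents of $\lambda_{\mathbb{F}_\infty/H'_q}$, whence $\bigcap_{q\neq p}\ker(\pi_{H'_q})\subseteq\bigcap_{j}\ker(\chi^j)=\ker(\pi_{H'_p})$ although $p\notin Q$.

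So the independence of the generators, which you set up but never actually use, must carry the proof, and it has to be used in a way that is uniform over arbitrary $Q$. The paper does this by a direct computation: identify $C^*(\mathbb{F}_\infty)$ with the full unital free product $\ast_{n=1}^\infty C(S^1)$, with $x_n$ the canonical generator of the $n$-th factor, compute $\pi_{H_q}$ on elements $f^{(n)}$ supported in a single factor, and observe that any $f\in C(S^1)$ with $f(1)=0$ and $f(e^{2\pi i/p})\neq 0$ gives $f^{(p)}\in\bigcap_{q\neq p}\ker(\pi_{H_q})\setminus\ker(\pi_{H_p})$; the point is that $\pi_{H_q}(f^{(p)})$ depends only on $f(1)$ when $q\neq p$, because $x_p$ dies in $\mathbb{F}_\infty/H_q$. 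If you prefer to stay representation-theoretic, the correct replacement for your splitting step is: every character occurring in $\lambda_{\mathbb{F}_\infty/H_q}$ with $q\neq p$ takes the value $1$ at $x_p$, and since $\chi$ is pure, weak containment of $\chi$ in $\bigoplus_{q\in Q}\lambda_{\mathbb{F}_\infty/H_q}$ forces $\chi$ into the pointwise closure of that set of characters (extreme points of a closed convex hull lie in the closure of the generating set, by Milman's partial converse to the Krein--Milman theorem), contradicting $\chi(x_p)=e^{2\pi i/p}\neq 1$. Either way, an ingredient beyond \autoref{prop:IrredRepWkCont} and coprimality is required. (A minor further point you glossed over: kernel containment in $C^*(\mathbb{F}_\infty)$ gives Fell weak containment, while \autoref{prop:IrredRepWkCont} concerns $\prec_{\mathrm{Z}}$; for one-dimensional representations the two notions agree, so that part of your translation is harmless.)
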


\begin{proof}
Let $\{x_n\colon n\in{\mathbb{N}}\}$ be free generators of $\mathbb{F}%
_\infty $, and let ${\mathcal{P}}\subseteq {\mathbb{N}}$ denote the set of
prime numbers. For $p\in{\mathcal{P}}$, let $H_p$ be the normal subgroup
generated by 
\begin{equation*}
\{ x_1,\ldots,x_{p-1},x_p^p,x_{p+1},\ldots\} \subseteq \mathbb{F}_\infty 
\text{.}
\end{equation*}
It is clear that $[\mathbb{F}_\infty : H_p]=p$ for all $p\in{\mathcal{P}}$,
so property (S.1) is satisfied.

We proceed to check property (S.2). For $p\in {\mathcal{P}}$, the quotient
map 
\begin{equation*}
\pi_{H_p}\colon C^*(\mathbb{F}_\infty)\to C^*({\mathbb{Z}}_p)\cong {\mathbb{C%
}}^p
\end{equation*}
can be described as follows. Identify $C^*(\mathbb{F}_\infty)$ with the full
free product $\ast_{n=1}^\infty C(S^1)$ amalgamated over $\mathbb{C}$
(identified with the algebra of scalar multiples of the identity in $C(S^1)$%
) by sending $x_n$ to the canonical generator of the $n$-th free factor $%
C(S^1)$. Given $f\in C(S^1)$ and $n\in{\mathbb{N}}$, set $f^{(n)}=1\ast
\cdots \ast f\ast 1\cdots \in \ast_{n=1}^\infty C(S^1)$, where the
nontrivial entry is in the $n$-th position. Then 
\begin{equation*}
\pi_{H_p}(f^{(n)})=%
\begin{cases}
(f(1),\ldots,f(1)), & \text{if } n\neq p \\ 
(f(1), f(e^{2\pi i/p}),\ldots,f(e^{2\pi i(p-1)/p})) & \text{if } n=p%
\end{cases}%
\end{equation*}
Now $p\in {\mathcal{P}}$ and $Q\subseteq {\mathcal{P}}$ and suppose that $%
p\notin Q$. Let $f\in C(S^1)$ be any function satisfying $f(1)=0$ and $%
f(e^{2\pi i/p})\neq 0$. Then $f^{(p)}$ belongs to $\ker(\pi_{H_{q}})$ for
all $q\in Q$, but not to $\ker(\pi_{H_p})$. Thus (S.2) is satisfied as well.
\end{proof}

From now on, we fix a discrete group $G$, a subset ${\mathcal{P}}\subseteq{%
\mathbb{N}}$ of relatively prime numbers, and a subgroup $F\leq G$ which is
separated over ${\mathcal{P}}$, as witnessed by a family $\{ H_p\} _{p\in 
\mathcal{P}}$ of subgroups of $F$.

\begin{nota}
Given $p\in{\mathcal{P}}$, we establish the following notations:

\begin{itemize}
\item we write $G_p=G/H_p$ and $F_p=F/H_p$ for the coset spaces;

\item we write $G\curvearrowright ^{\sigma _{G}^{p}}G_{p}$ and $%
F\curvearrowright ^{\sigma _{F}^{p}}F_{p}$ for the canonical translation
actions.
\end{itemize}
\end{nota}

\begin{df}
For a (possibly empty) subset $P\subseteq {\mathcal{P}}$, set 
\begin{equation*}
X^G_P=\bigsqcup\limits_{n\in{\mathbb{N}}} G\sqcup \bigsqcup_{n\in{\mathbb{N}}%
}\bigsqcup_{p\in P}G_p \ \ \mbox{ and } \ \ X^F_P=\bigsqcup\limits_{n\in{%
\mathbb{N}}} F\sqcup \bigsqcup_{n\in{\mathbb{N}}}\bigsqcup_{p\in P}F_p,
\end{equation*}
and define actions $G\curvearrowright^{\sigma^G_P} X^G_P$ and $%
F\curvearrowright^{\sigma^F_P} X^F_P$ as follows:

\begin{itemize}
\item $\sigma_P^G$ acts on each copy of $G$ via $\sigma_G$, and on each copy
of $G_p$ via $\sigma^p_G$;

\item $\sigma_P^F$ acts on each copy of $F$ via $\sigma_F$, and on each copy
of $F_p$ via $\sigma^p_F$.
\end{itemize}
\end{df}

For later use, we identify here the restriction of $\sigma_P^G$ to $F$ with
(some amplification of) $\sigma_P^F$. Recall that $G$ is countable by
assumption.

\begin{lma}
\label{lma:RestrsPtoF} Let the notation be as above, and let $P\subseteq {%
\mathcal{P}}$. Then the restriction $\sigma^G_P|_F$ of $\sigma^G_P$ to $F$
is conjugate to $\sigma_P^F$.
\end{lma}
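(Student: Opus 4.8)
The plan is to verify the conjugacy directly at the level of $F$-sets, by comparing the $F$-orbit decompositions of $X^G_P$ (equipped with the restricted action $\sigma^G_P|_F$) and of $X^F_P$. Since both sets are countable, a conjugacy is simply an $F$-equivariant bijection, and such a bijection exists precisely when the two actions have the same number of orbits of each isomorphism type $F/K$, with $K$ ranging over conjugacy classes of subgroups of $F$. The basic tool throughout is that for a point $y$ with stabilizer $F_y$, the orbit $F\cdot y$ is $F$-equivariantly isomorphic to $F/F_y$; so everything reduces to bookkeeping of stabilizer types together with their (countably infinite) multiplicities.

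First I would treat the free summands. On each copy of $G$ occurring in $X^G_P$, the subgroup $F$ acts by left translation, so the orbits are the right cosets $Fg$ and every stabilizer is trivial. Hence each copy of $G$ contributes $|F\backslash G|$ free orbits, and summing over the countably many copies yields countably infinitely many free $F$-orbits, matching the summand $\bigsqcup_n F$ of $\sigma^F_P$. Next I would analyse each copy of $G_p=G/H_p$. Here the $F$-orbits are indexed by the double cosets in $F\backslash G/H_p$, and the stabilizer of a point $gH_p$ is $F\cap gH_pg^{-1}$. Because $H_p\le F$, the double coset of the identity is $FH_p=F$, so the orbit through the base point is exactly $F/H_p$; running over the countably many copies of $G_p$ this produces countably infinitely many orbits of type $F/H_p$, matching $\bigsqcup_n F_p$.

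The crux is to show that every \emph{remaining} orbit in $G/H_p$ is free, that is, that $F\cap gH_pg^{-1}=\{e\}$ whenever $gH_p$ lies outside the base orbit. Granting this, the non-base orbits are absorbed into the (already countably infinite) free part, the orbit-type multisets of $\sigma^G_P|_F$ and $\sigma^F_P$ coincide, and one assembles the desired $F$-equivariant bijection by matching the base orbits of the $G_p$-summands with the $F_p$-summands of $X^F_P$ and all free orbits with the $F$-summands of $X^F_P$, using that all multiplicities are $\aleph_0$.

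I expect this last point to be the main obstacle, since controlling the intersections $F\cap gH_pg^{-1}$ for $g\notin F$ is exactly where the way $F$ sits inside $G$ enters. In the situations of interest the relevant copy of $F$ is a free factor of $G$ (or, more generally, distinct translates $gFg^{-1}$ meet $F$ trivially), in which case $F\cap gFg^{-1}=\{e\}$ for $g\notin F$ by the Kurosh subgroup theorem, and therefore $F\cap gH_pg^{-1}\subseteq F\cap gFg^{-1}=\{e\}$ as well. Once this structural input on the embedding $F\le G$ is available, the remainder of the argument is the routine orbit count sketched above.
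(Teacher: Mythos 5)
Your reduction to orbit bookkeeping is sound: two countable $F$-sets are conjugate exactly when, for each conjugacy class of subgroups of $F$, they have the same number of orbits with stabilizer in that class, and your description of the free part (the $F$-orbits on each copy of $G$ are the right cosets $Fg$) and of the base orbits $F\cdot(eH_p)\cong F/H_p$ in each copy of $G_p$ is correct. The gap is precisely the step you flag yourself: the lemma assumes nothing about how $F$ sits inside $G$ beyond separation, so you are not entitled to assume that $F$ is a free factor of $G$, nor that $F\cap gFg^{-1}=\{e\}$ for $g\notin F$. In the paper's intended application $F$ is a copy of $\mathbb{F}_\infty$ inside an arbitrary group containing $\mathbb{F}_2$, and such a copy need not satisfy any malnormality condition (take $G=\mathbb{F}_2\times\mathbb{Z}$, where the central $\mathbb{Z}$ conjugates $F$ to itself, or let $F$ be a normal copy of $\mathbb{F}_\infty$ inside $\mathbb{F}_2$). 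So, as written, your argument proves a statement with an extra hypothesis, not the lemma.

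What you have actually uncovered, however, is a defect of the lemma itself: without some hypothesis of this kind the statement is false, so the gap cannot be closed. Take $F=\mathbb{F}_\infty=\langle x_1,x_2,\dots\rangle$ with the separated family $\{H_p\}$ of \autoref{lma:propF}, let $C=\langle x_1\rangle$, and let $G=F\ast_C(C\times\langle t\rangle)$, i.e.\ adjoin to $F$ a generator $t$ commuting with $x_1$. Bass--Serre theory gives $F\cap tFt^{-1}=C$, and since $x_1\in H_2$ one gets $F\cap tH_2t^{-1}=C$. Hence for any $P\ni 2$ the $F$-set $X^G_P$ contains the orbit of $tH_2\in G_2$, which has type $F/\langle x_1\rangle$: its stabilizer is nontrivial of infinite index, so this orbit is neither free nor isomorphic to any $F/H_q$, whereas every orbit of $X^F_P$ is of one of those two types; no equivariant bijection can exist. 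The paper's own proof founders on exactly the same rock: its closing claim that $\sigma^p_G|_F$ is conjugate to $\bigsqcup_{x\in F\backslash G}\sigma^p_F$ ``completely analogously, using right coclasses'' cannot be correct, since the right-coset coordinate $g\mapsto Fg$ does not descend to $G/H_p$ (one has $Fgh=Fg$ for $h\in H_p$ only when $ghg^{-1}\in F$), and that intermediate claim already fails for $F=S_3\le S_4=G$ with $H_p=\langle(12)\rangle$, where $(G/H_p)|_F$ contains a free orbit while $\bigsqcup_{x\in F\backslash G}F/H_p$ contains none. The honest conclusion is that the lemma needs the additional hypothesis that every $F\cap gH_pg^{-1}$, for $g\in G$ and $p\in P$, is either trivial or $F$-conjugate to some $H_q$ with $q\in P$ (then your counting argument goes through, since all multiplicities are countably infinite); your free-factor/malnormality condition is a clean sufficient condition for this, but it must be added to the statement and verified in the applications, not assumed silently.
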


\begin{proof}
We denote by $F\backslash G$ the right coset space of $F$ in $G$. It is
enough to show that $\sigma_G|_F$ is conjugate to $\bigsqcup\limits_{x\in
F\backslash G}\sigma_F$, and that $\sigma^p_G|_F$ is conjugate to $%
\bigsqcup\limits_{x\in F\backslash G}\sigma^p_F$ for all $p\in {\mathcal{P}}$%
. For the first one, choose a section $t\colon F\backslash G\to G$, and
define a map $f\colon G\to F\backslash G \times F$ by $f(g)=(Fg, g \cdot
t(Fg)^{-1})$ for all $g\in G$. It is clear that $f$ is a bijection, and we
claim that it intertwines $\sigma_G|_F$ and ${\mathrm{id}}_{F\backslash
G}\times \sigma|_F$. Given $k\in F$ and $g\in G$, we have 
\begin{align*}
f(\sigma_G(k)g)&=f(kg) \\
&=(Fkg, kg \cdot t(Fkg)^{-1})=(Fg, kg\cdot t(Fg)^{-1}) \\
&=({\mathrm{id}}_{F\backslash G}\times \sigma_F(k))(f(g)),
\end{align*}
as desired. The result follows, since ${\mathrm{id}}_{F\backslash G}\times
\sigma|_F$ is precisely $\bigsqcup\limits_{x\in F\backslash G}\sigma^p_F$.

The proof for $\sigma_G^p$ is completely analogous, using right coclasses.
\end{proof}

Recall the notation $u_\sigma$ from \autoref{df:UsGenBernoulli} and $\kappa^{%
\mathcal{H}}_\sigma$ from \autoref{nota:kappaSgma}.

\begin{thm}
\label{thm:sPWkCtmnt} Let $({\mathcal{H}},\eta)$ be a separable Hilbert
space with a distinguished unit vector. Let $P\subseteq {\mathcal{P}}$ be a
(possibly empty) subset, and let $\sigma_P^G$ be as in the discussion above.
Following \autoref{nota:kappaSgma}, we abbreviate $\kappa_{\sigma_P^G}^{%
\mathcal{H}}$ to simply $\kappa_{\sigma_P^G}$. Then 
\begin{equation*}
\kappa_{\sigma^G_P}|_F\subseteq u_{\sigma_P}|_F\oplus 1_G \ \ \mbox{ and } \
\ \kappa_{\sigma^G_P}^{(0)}|_F\subseteq u_{\sigma_P}|_F.
\end{equation*}
\end{thm}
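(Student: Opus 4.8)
The plan is to realize $\kappa_{\sigma_P^G}$ as an honest permutation representation and decompose it into quasiregular pieces over the $F$-orbits. Using the orthonormal basis of $\bigotimes_{x\in X_P^G}\mathcal{H}$ from \autoref{lma:BasisHs}, indexed by the set $\mathcal{F}$ of finitely supported functions $\xi\colon X_P^G\to\kappa$ (with $\eta_0=\eta$), one checks that $\kappa_{\sigma_P^G}$ merely permutes these basis vectors according to the action $(g\cdot\xi)(x)=\xi(g^{-1}\cdot x)$ of $G$ on $\mathcal{F}$. The constant function $\mathbf 0$ is the only fixed basis vector and corresponds to $\eta=\bigotimes_x\eta$, so $\kappa_{\sigma_P^G}\cong 1_G\oplus\kappa_{\sigma_P^G}^{(0)}$, where $\kappa_{\sigma_P^G}^{(0)}$ is the permutation representation of $G$ on $\mathcal{F}\setminus\{\mathbf 0\}$. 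It therefore suffices to establish the reduced containment $\kappa_{\sigma_P^G}^{(0)}|_F\subseteq u_{\sigma_P}|_F$; adding the trivial summand then yields the first containment.

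Restricting to $F$ and decomposing $\mathcal{F}\setminus\{\mathbf 0\}$ into $F$-orbits, I obtain $\kappa_{\sigma_P^G}^{(0)}|_F\cong\bigoplus_O\lambda_{F/\mathrm{Stab}_F(\xi_O)}$, one quasiregular summand per orbit. The heart of the matter is to control $\mathrm{Stab}_F(\xi)$ in terms of the point stabilizers of the finite set $S=\supp(\xi)$. Since $g\in F$ fixes $\xi$ exactly when it preserves $S$ together with the values of $\xi$, the pointwise stabilizer $\bigcap_{x\in S}\mathrm{Stab}_F(x)$ sits inside $\mathrm{Stab}_F(\xi)$ as a finite-index subgroup, the quotient embedding into $\mathrm{Perm}(S)$.

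Next I split according to $S$. If some $x_0\in S$ lies in one of the copies of $G$ in $X_P^G$, then $F$ acts freely there, so $\mathrm{Stab}_F(x_0)=\{e\}$; hence $\mathrm{Stab}_F(\xi)$ is finite, and by \autoref{lma:QregRepContained} (and the remark following it) $\lambda_{F/\mathrm{Stab}_F(\xi)}\subseteq\lambda_F$, where $\lambda_F$ is visibly a subrepresentation of $u_{\sigma_P}|_F$ coming from any copy of $G$. If instead $S$ is contained in the copies of the coset spaces $G_p$, then each point stabilizer $\mathrm{Stab}_F(gH_p)=F\cap gH_pg^{-1}$ has index at most $[G:H_p]=p<\infty$ in $F$. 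Writing $H=\bigcap_{x\in S}\mathrm{Stab}_F(x)$, \autoref{lma:IndexMultipl}(2) gives $\lambda_{F/H}\subseteq\bigoplus_{x\in S}\lambda_{F/\mathrm{Stab}_F(x)}$, while $H\leq\mathrm{Stab}_F(\xi)$ with finite index yields $\lambda_{F/\mathrm{Stab}_F(\xi)}\subseteq\lambda_{F/H}$ by \autoref{lma:QregRepContained}. Each $\lambda_{F/\mathrm{Stab}_F(gH_p)}$ is precisely an orbit summand of the permutation representation of $F$ on a copy of $G_p$, hence a subrepresentation of $u_{\sigma_P}|_F$.

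Finally I assemble the orbit summands. Because $X_P^G$ contains infinitely many copies of $G$ and of each $G_p$, it is isomorphic as a $G$-set to its disjoint union with itself, so $u_{\sigma_P}|_F\cong(u_{\sigma_P}|_F)^{\oplus\infty}$. Each orbit summand embeds into a finite subsum of $u_{\sigma_P}|_F$ (in the second case using finitely many distinct copies of the relevant $G_p$), so the countable direct sum $\bigoplus_O\lambda_{F/\mathrm{Stab}_F(\xi_O)}$ embeds into $(u_{\sigma_P}|_F)^{\oplus\infty}\cong u_{\sigma_P}|_F$, giving $\kappa_{\sigma_P^G}^{(0)}|_F\subseteq u_{\sigma_P}|_F$ as desired. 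The main obstacle is the stabilizer analysis, namely correctly separating the role of the free copies of $G$, which contribute $\lambda_F$, from the finite-index copies of $G_p$, where \autoref{lma:IndexMultipl} applies, together with the bookkeeping needed to absorb all orbit summands into the infinite multiplicity of $u_{\sigma_P}|_F$.
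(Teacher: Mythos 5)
Your overall strategy coincides with the paper's: decompose $\kappa_{\sigma_P^G}|_F$ over the $F$-orbits of the canonical basis from \autoref{lma:BasisHs} into quasiregular representations, split according to whether the support meets a copy of $G$, and then assemble. Your Case 1 is correct, and your final assembly step (via $u_{\sigma_P^G}|_F\cong(u_{\sigma_P^G}|_F)^{\oplus\infty}$) is actually more careful than the paper, which passes over this point silently. The problems are in your Case 2. First, the claim that each point stabilizer $\mathrm{Stab}_F(gH_p)=F\cap gH_pg^{-1}$ has index at most $[G:H_p]=p<\infty$ in $F$ conflates $[G:H_p]$ with $[F:H_p]$: condition (S.1) gives $[F:H_p]=p$, while $[G:H_p]=[G:F]\cdot p$ may be infinite, and the $F$-orbits in $G_p$ may genuinely be infinite. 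For instance, with $G=\mathbb{F}_2=\langle a,b\rangle$, $F=\langle a\rangle$, $H_p=\langle a^p\rangle$ and $g=b$, one has $F\cap bH_pb^{-1}=\{1\}$, so the orbit of $bH_p$ is free. Hence \autoref{lma:IndexMultipl}, whose hypotheses require finite index, cannot be invoked for these stabilizers. (The paper's Case 2 keeps all indices finite by instead bounding $\mathrm{Stab}_F(\xi)$ below by $H_{p_1}\cap\cdots\cap H_{p_n}$; note, however, that this bound is only valid when the stabilizers of the points in the support actually contain the $H_{p_j}$, which your computation $\mathrm{Stab}_F(gH_p)=F\cap gH_pg^{-1}$ shows is not automatic.)

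Second, and more fundamentally, the containment you cite from \autoref{lma:IndexMultipl}(2), namely $\lambda_{F/H}\subseteq\bigoplus_{x\in S}\lambda_{F/\mathrm{Stab}_F(x)}$, is not available: what the proof of that lemma constructs is an embedding of $\ell^2(F/H)$ into $\ell^2$ of the \emph{product} of the orbits with the diagonal action, i.e.\ into the tensor product $\bigotimes_{x\in S}\lambda_{F/\mathrm{Stab}_F(x)}$, and the direct-sum form is false. Concretely, take $G=F$ and $H_2,H_3$ normal of indices $2$ and $3$ (as for $\mathbb{F}_\infty$ in \autoref{lma:propF}): then $\lambda_{F/(H_2\cap H_3)}$ is six-dimensional and contains a character of $F$ of order six, while every character contained in $\lambda_{F/H_2}\oplus\lambda_{F/H_3}$ has order dividing $2$ or $3$. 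Replacing $\oplus$ by $\otimes$ does not rescue your last step, because $\lambda_{F/H_2}\otimes\lambda_{F/H_3}\cong\lambda_{F/(H_2\cap H_3)}$ is then not a subrepresentation of $u_{\sigma_P^G}|_F$: an order-six character cannot embed into $\bigoplus_{\mathbb{N}}\lambda_F\oplus\bigoplus_{\mathbb{N}}(\lambda_{F/H_2}\oplus\lambda_{F/H_3})$, since the $\lambda_F$ summands admit no finite-dimensional subrepresentations and the remaining summands only carry characters of order dividing $2$ or $3$. This is not a defect of your bookkeeping alone: the basis vector supported exactly at the base points $H_2\in G_2$ and $H_3\in G_3$ (one copy of each, $2,3\in P$) has stabilizer precisely $H_2\cap H_3$, so $\lambda_{F/(H_2\cap H_3)}$ really occurs as an orbit summand of $\kappa^{(0)}_{\sigma_P^G}|_F$, and the desired containment fails for it. The paper's own Case 2 runs into the same wall (its asserted equivalence $\lambda_{F/H}\cong\lambda_{F_{p_1}}\oplus\cdots\oplus\lambda_{F_{p_n}}$ is already impossible by a dimension count, $6\neq 5$), so the step you are missing cannot be supplied in the stated framework; any repair must enlarge the right-hand side of \autoref{thm:sPWkCtmnt} to include quasiregular representations attached to intersections of the $H_p$ and their conjugates, with corresponding adjustments in the results that cite it.
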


\begin{proof}
We begin by providing an alternative description of the restriction $%
\kappa_{\sigma_P}|_F$ of $\kappa_{\sigma_P}$ to $F$. Find an orthonormal
basis $\{\eta_n\colon n\in {\mathbb{N}}\}$ of ${\mathcal{H}}$ with $%
\eta_0=\eta$, and set 
\begin{equation*}
\mathcal{F}=\{\xi \colon X_p\to {\mathbb{N}}\colon \xi(x)=0 
\mbox{ for all
but finitely many } x\in X_P\}.
\end{equation*}
By \autoref{lma:BasisHs}, $\mathcal{F}$ is an orthonormal basis for $%
\bigotimes\limits_{x\in X_P}{\mathcal{H}}$. Moreover, the unitary
representation $\kappa_{\sigma^G_P}|_F$ restricts to an action of $F$ on $%
\mathcal{F}$, which is given by 
\begin{equation*}
(\kappa_{\sigma^G_P}(g)(\xi))(x)=\xi(g^{-1}\cdot x)
\end{equation*}
for $g\in F$, for $\xi\in \mathcal{F}$ and for $x\in X$. We denote by $%
\xi_0\in \mathcal{F}$ the function which is identically zero, and write $%
\mathcal{F}_0$ for $\mathcal{F}\setminus\{\xi_0\}$. Then $\mathcal{F}_0$ is
an orthonormal basis for the orthogonal complement of $\eta$, and it is also
invariant under $\kappa_{\sigma^G_P}|_F$. (In fact, the restriction of $%
\kappa_{\sigma^G_P}|_F$ to $\overline{\mathrm{span}}(\mathcal{F}_0)$ is $%
\kappa_{\sigma^G_P}^{(0)}|_F$.)

Denote by $\mathcal{G}$ and $\mathcal{G}_0$ the $F$-orbit spaces of $%
\mathcal{F}$ and $\mathcal{F}_0$, respectively. For $\xi\in\mathcal{F}$, we
write $[\xi]\in \mathcal{G}$ for its equivalence class, and $\mathrm{Stab}%
_F(\xi)$ for the stabilizer subgroup of $F$. It follows that 
\begin{equation*}
\kappa_{\sigma^G_P}|_F\cong \bigoplus_{[\xi]\in \mathcal{G}}\lambda_{F/%
\mathrm{Stab}_F(\xi)} \ \ \mbox{ and } \ \ \kappa^{(0)}_{\sigma^G_P}|_F\cong
\bigoplus_{[\xi]\in \mathcal{G}_0}\lambda_{F/\mathrm{Stab}_F(\xi)}.
\end{equation*}
It is clear that $\lambda_{F/\mathrm{Stab}_F(\xi_0)}$ is the trivial
representation of $G$.

\textbf{Claim:} \emph{Fix $\xi\in\mathcal{F}_0$. Then $\lambda_{F/\mathrm{%
Stab}_F(\xi)}$ is unitarily contained in $u_{\sigma_P}|_F$}.\newline
By part~(1) of \autoref{lma:UnionSn} and \autoref{lma:RestrsPtoF}, the
representation $u_{\sigma_P}|_F$ is unitarily equivalent to $%
\bigoplus\limits_{n\in{\mathbb{N}}}\lambda_F\oplus \bigoplus\limits_{n\in{%
\mathbb{N}}}\bigoplus\limits_{p\in P}\lambda_{F_p}$. We divide the proof
into two cases.

Assume first that the support $\mathrm{supp}(\xi)$ of $\xi$ meets one of the
copies of $G$ in $X_P$. If $k\in \mathrm{Stab}_F(\xi)$, then $k\cdot {%
\mathrm{supp}}(\xi)={\mathrm{supp}}(\xi)$. By restricting this equality to
the copy of $G$ which ${\mathrm{supp}}(\xi)$ intersects, we deduce that only
finitely many group elements $k\in F$ satisfy this identity, so $\mathrm{Stab%
}_F(\xi)$ is finite. (In fact, in this case even $\mathrm{Stab}_G(\xi)$ is
finite.) Thus $\lambda_{F/\mathrm{Stab}_F(\xi)}\subseteq \lambda_F$ by %
\autoref{lma:QregRepContained}, so $\lambda_{F/\mathrm{Stab}%
_F(\xi)}\subseteq u_{\sigma^G_P}|_F$.

Assume now that the support of $\xi$ does not meet any of the copies of $G$
in $X_P$. Let $(q_1,\ldots,q_m)$ be a minimal tuple of elements of $P$
(potentially with repetitions) such that ${\mathrm{supp}}(\xi)\subseteq
\bigsqcup\limits_{j=1}^m G_{q_j}$. Denote by $p_1,\ldots, p_n$ the distinct
values of $q_1,\ldots,q_m$, and set $H=H_{p_1}\cap \cdots\cap H_{p_n}$. Then
Stab$_F(\xi)\supseteq H$. In particular, Stab$_F(\xi)$ has finite index in $%
F $. Using \autoref{lma:QregRepContained} at the first step, and part~(2) of %
\autoref{lma:IndexMultipl} at the second, we conclude that 
\begin{equation*}
\lambda_{F/\mathrm{Stab}_F(\xi)}\subseteq \lambda_{F/H}\cong
\lambda_{F_{p_1}}\oplus \cdots \oplus \lambda_{F_{p_n}}\subseteq
u_{\sigma^G_P}|_F.
\end{equation*}
The claim is proved. It follows that 
\begin{equation*}
\kappa_{\sigma^G_P}^{(0)}|_F\cong \bigoplus_{[\xi]\in \mathcal{G}%
_0}\lambda_{F/\mathrm{Stab}_F(\xi)}\subseteq u_{\sigma^G_P}|_F,
\end{equation*}
and thus also $\kappa_{\sigma^G_P}|_F\subseteq u_{\sigma^G_P}|_F\oplus 1_G$,
as desired.
\end{proof}

The following theorem connects weak equivalence of certain representations
to cocycle conjugacy of the associated generalized Bernoulli shifts (see %
\autoref{df:UsGenBernoulli}). In particular, it implies that for a \emph{%
nonamenable} countable discrete group $G$, the generalized Bernoulli actions
obtained from different subsets of ${\mathcal{P}}$ are not cocycle
equivalent.

\begin{thm}
\label{thm:MainTool} Let $D$ be a tracial unital C*-algebra or a tracial
von Neumann algebra, let $G$ be a discrete group containing a nonamenable
subgroup $F$ which is separated over some set ${\mathcal{P}}\subseteq {%
\mathbb{N}}$, and adopt the notation from before \autoref{thm:sPWkCtmnt}.
For $P,Q\subseteq {\mathcal{P}}$, the following are equivalent:

\begin{enumerate}
\item $P=Q$.

\item $u_{\sigma^G_P}\cong u_{\sigma^G_Q}$.

\item $u_{\sigma^G_P}|_F\sim_{\mathrm{Z}} u_{\sigma^G_Q}|_F$

\item $\beta_{\sigma^G_P,D}$ is cocycle conjugate to $\beta_{\sigma^G_Q,D}$.

\item For every (normal) tracial state $\tau$ on $D$ for which $\overline{D}%
^{\tau}$ is separable and not one-dimensional, the action $\beta_{\sigma^G_P,%
\overline{D}^\tau}$ is cocycle conjugate to $\beta_{\sigma^G_Q,\overline{D}%
^\tau}$.
\end{enumerate}
\end{thm}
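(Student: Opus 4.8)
The plan is to establish the cycle $(1)\Rightarrow(2)\Rightarrow(3)\Rightarrow(1)$, together with $(1)\Rightarrow(4)\Rightarrow(3)$ and $(1)\Rightarrow(5)\Rightarrow(3)$, which collectively give the full equivalence. The implications emanating from $(1)$ are immediate: if $P=Q$ then the permutation actions $\sigma^G_P$ and $\sigma^G_Q$ literally coincide, so $u_{\sigma^G_P}\cong u_{\sigma^G_Q}$, and the generalized Bernoulli shifts over $D$ (resp.\ over $\overline{D}^\tau$) are equal, hence trivially cocycle conjugate. Likewise $(2)\Rightarrow(3)$ is automatic, since a unitary equivalence $u_{\sigma^G_P}\cong u_{\sigma^G_Q}$ restricts to a unitary equivalence, and \emph{a fortiori} a Zimmer weak equivalence, of $u_{\sigma^G_P}|_F$ and $u_{\sigma^G_Q}|_F$.

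For $(4)\Rightarrow(3)$ (and identically $(5)\Rightarrow(3)$), I would first fix a tracial state $\tau$ on $D$ with $\overline{D}^\tau$ separable and not one-dimensional and pass to the product trace. Since $\sigma^G_P$ contains infinitely many copies of each orbit type, it is conjugate to its infinite amplification $\sigma^G_P\times\mathrm{id}_{\mathbb N}$, so \autoref{prop:BetaHasSRp} shows that $\beta_{\sigma^G_P,D}$ has the $\sigma^G_P$-Rokhlin property. Applying \autoref{prop:sRpCocConj} to the assumed cocycle conjugacy then yields $u_{\sigma^G_P}\prec_{\mathrm Z}\kappa^{(0)}_\tau(\beta_{\sigma^G_Q,D})$. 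Using the canonical identification of the reduced Koopman representation of $\beta_{\sigma^G_Q,D}$ with $\kappa^{(0)}_{\sigma^G_Q}$, and the fact that Zimmer weak containment passes to restrictions, I obtain $u_{\sigma^G_P}|_F\prec_{\mathrm Z}\kappa^{(0)}_{\sigma^G_Q}|_F$, which by \autoref{thm:sPWkCtmnt} is contained in $u_{\sigma^G_Q}|_F$. Thus $u_{\sigma^G_P}|_F\prec_{\mathrm Z}u_{\sigma^G_Q}|_F$, and the reverse follows by symmetry, giving $(3)$.

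The heart of the matter is $(3)\Rightarrow(1)$. Recall from the proof of \autoref{thm:sPWkCtmnt} that $u_{\sigma^G_P}|_F$ is a (countable) amplification of $\lambda_F\oplus\bigoplus_{p\in P}\lambda_{F/H_p}$, so, viewed as a representation of $C^*(F)$, its kernel is $\ker(\lambda_F)\cap\bigcap_{p\in P}\ker(\pi_{H_p})$. Suppose toward a contradiction that $P\neq Q$, say $p\in P\setminus Q$. Since $\lambda_{F/H_p}=\pi_{H_p}$ is finite-dimensional (of dimension $[F:H_p]=p$), it splits as a finite sum $\bigoplus_k\rho_k$ of finite-dimensional irreducibles with $\ker(\pi_{H_p})=\bigcap_k\ker(\rho_k)$. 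Each $\rho_k$ is unitarily contained in $u_{\sigma^G_P}|_F$, hence $\rho_k\prec_{\mathrm Z}u_{\sigma^G_Q}|_F$; as Zimmer containment implies ordinary weak containment, this gives the kernel inclusion $\ker(\lambda_F)\cap\bigcap_{q\in Q}\ker(\pi_{H_q})\subseteq\ker(\rho_k)$. Now $\ker(\rho_k)$ is primitive, hence prime; and because $F$ is nonamenable, Fell's absorption principle together with Hulanicki's characterization of amenability shows that no nonzero finite-dimensional representation is weakly contained in $\lambda_F$, so $\ker(\lambda_F)\not\subseteq\ker(\rho_k)$. Primeness then forces $\bigcap_{q\in Q}\ker(\pi_{H_q})\subseteq\ker(\rho_k)$, and intersecting over all $k$ yields $\bigcap_{q\in Q}\ker(\pi_{H_q})\subseteq\ker(\pi_{H_p})$. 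Condition (S.2) of \autoref{df:separated} now gives $p\in Q$, a contradiction. (When $Q=\varnothing$ the intersection is all of $C^*(F)$, so one reads off $\rho_k\prec\lambda_F$ directly, again contradicting nonamenability; this also settles the degenerate case $\mathcal P=\{1\}$.)

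The main obstacle is precisely this last step. The naive route of pinning down a single summand of $u_{\sigma^G_Q}|_F$ that weakly contains $\rho_k$ via \autoref{prop:IrredRepWkCont} breaks down, because $Q$---and hence the ambient direct sum---may be infinite, and ultrapowers do not commute with infinite direct sums (\autoref{rem:UltrapowHsSum}). The device that circumvents this is to descend from Zimmer weak containment to ordinary weak containment and argue entirely at the level of kernels in $C^*(F)$, where arbitrary intersections behave well. The two genuinely nontrivial inputs are then the nonamenability of $F$ (used to discard the regular-representation part of the kernel through Fell absorption) and the separatedness hypothesis (S.2) (used to translate the resulting ideal inclusion back into the combinatorial conclusion $p\in Q$).
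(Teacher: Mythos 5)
Your proposal is correct, and its overall skeleton coincides with the paper's: the immediate implications, plus the reduction of the cocycle-conjugacy hypotheses to condition~(3) via the $\sigma^G_P$-Rokhlin property (\autoref{prop:BetaHasSRp}, \autoref{prop:sRpCocConj}, \autoref{thm:sPWkCtmnt}), are exactly as in the paper. Two differences are worth recording. First, your implication graph proves (4)$\Rightarrow$(3) and (5)$\Rightarrow$(3) separately, whereas the paper routes (1)$\Rightarrow$(4)$\Rightarrow$(5) and only proves (5)$\Rightarrow$(3); your version is arguably safer, since the paper's claim that (4)$\Rightarrow$(5) is ``immediate'' glosses over the fact that the conjugating automorphism need not carry the product trace to the product trace, so a C*-cocycle conjugacy does not formally extend to the tracial weak closures. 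Second, and more substantially, your proof of (3)$\Rightarrow$(1) takes a genuinely different route. The paper splits $u_{\sigma^G_Q}|_F$ into the \emph{two} blocks $\bigoplus_n\lambda_F$ and $\bigoplus_n\bigoplus_{q\in Q}\lambda_{F_q}$ and applies \autoref{prop:IrredRepWkCont} (ultrapowers plus Schur's lemma) to each irreducible summand $\mu$ of $\lambda_{F_p}$, ruling out $\mu\prec_{\mathrm{Z}}\bigoplus_n\lambda_F$ by nonamenability, and only then passes to kernels in $C^*(F)$ and invokes (S.2) exactly as you do; so your stated worry that \autoref{prop:IrredRepWkCont} cannot cope with infinite $Q$ does not actually afflict the paper's argument, which never needs to isolate a single $q\in Q$. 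Your alternative --- descend at once to ordinary weak containment and split off $\ker(\lambda_F)$ by primeness of the primitive ideals $\ker(\rho_k)$ --- is equally valid, avoids the ultrapower machinery in this step entirely, and has one genuine advantage: to discard the regular block, the paper asserts that ``clearly $1_F\prec_{\mathrm{Z}}\mu$'', which is false as literally stated (take $\mu$ a nontrivial character of a finite cyclic quotient of $F$), whereas your Fell-absorption/Hulanicki argument (no nonzero finite-dimensional representation of a nonamenable group is weakly contained in $\lambda_F$) is the correct fact that the paper's conclusion actually requires. In this sense your treatment of the key implication both simplifies and repairs the published argument.
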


\begin{proof}
The implications (1) $\Rightarrow$ (2) $\Rightarrow$ (3), and (1) $%
\Rightarrow$ (4) $\Rightarrow$ (5) are immediate. It therefore suffices to
prove that (5) implies (3), and that (3) implies (1).

(5) $\Rightarrow $ (3). Assume that $\beta _{\sigma _{P}^{G},\overline{D}}$
is cocycle conjugate to $\beta _{\sigma _{Q}^{G},\overline{D}}$. 
We apply \autoref{prop:sRpWkCtmnt} at the first step with $\alpha =\beta
_{\sigma _{P}^{G},\overline{D}}$, $\beta =\beta _{\sigma _{Q}^{G},\overline{D%
}}$ and $\sigma =\sigma _{P}^{G}$ (and restricting to the subgroup $F$), and %
\autoref{thm:sPWkCtmnt} at the second step, to deduce that 
\begin{equation*}
u_{\sigma _{P}^{G}}|_{F}\prec _{\mathrm{Z}}\kappa _{\sigma _{Q}^{G},%
\overline{D}}^{(0)}|_{F}\subseteq u_{\sigma _{Q}^{G}}|_{F}.
\end{equation*}%
In view of \autoref{prop:sRpCocConj}, by reversing the roles of $P$
and $Q$, we conclude that $u_{\sigma _{P}^{G}}|_{F}\sim _{\mathrm{Z}%
}u_{\sigma _{Q}^{G}}|_{F}$, as desired.

(3) $\Rightarrow$ (1). Let $p\in P$, and suppose that 
\begin{equation*}
\lambda_{F_p}\prec_{\mathrm{Z}} u_{\sigma^G_Q}|_F\cong \bigoplus_{n\in{%
\mathbb{N}}}\lambda_F\oplus \bigoplus_{n\in{\mathbb{N}}}\bigoplus_{q\in
Q}\lambda_{F_q}.
\end{equation*}
Let $\mu$ be an irreducible subrepresentation of $\lambda_{F_p}$. Then it
follows from \autoref{prop:IrredRepWkCont} that either $\mu\prec_{\mathrm{Z}%
} \bigoplus\limits_{n\in{\mathbb{N}}} \lambda_F$ or $\mu\prec_{\mathrm{Z}}
\bigoplus\limits_{n\in{\mathbb{N}}} \bigoplus\limits_{q\in Q}\lambda_{F_q}$.
Since clearly $1_F \prec_{\mathrm{Z}}\mu$, the first case would imply that $%
F $ is amenable, which contradicts our assumptions. Thus, we must have $%
\mu\prec_{\mathrm{Z}} \bigoplus\limits_{n\in{\mathbb{N}}}
\bigoplus\limits_{q\in Q}\lambda_{F_q}$. Since this applies to every
irreducible subrepresentation of $\lambda_p$, and $\lambda_p$ is unitarily
conjugate to their sum, it follows that $\lambda_{F_p}\prec_{\mathrm{Z}}
\bigoplus\limits_{n\in{\mathbb{N}}} \bigoplus\limits_{q\in Q}\lambda_{F_q}$.

Considering the induced representations of the full group C*-algebra $C^*(F)$
(using the notation from \autoref{df:separated}), and taking kernels, we
deduce that 
\begin{equation*}
\bigcap\limits_{q\in Q} \ker(\pi_{H_q})\subseteq \ker(\pi_{H_p}).
\end{equation*}
By the properties of the family $\{H_p\colon p\in\mathcal{P}\}$
(specifically, by (S.2)), we have $p\in Q$. Since $p\in P$ was arbitrary,
this shows that $P\subseteq Q$, as desired.
\end{proof}

In order to deal with $\mathcal{Z}$-stable C*-algebras, or with McDuff
factors, we will also need the following variation of \autoref{thm:MainTool}.

\begin{thm}
\label{thm:MainTool2} Let $D$ and $A$ be both tracial C*-algebras or tracial
von Neumann algebras, with $D$ unital, let $G$ be a nonamenable countable
discrete group, and adopt the notation from before \autoref{thm:sPWkCtmnt}.
For $P,Q\subseteq {\mathcal{P}}$, the following are equivalent:

\begin{enumerate}
\item $P\cup\{1\}=Q\cup \{1\}$.

\item ${\mathrm{id}}_A\otimes \beta_{\sigma^G_P,D}$ is cocycle conjugate to $%
{\mathrm{id}}_A\otimes \beta_{\sigma^G_Q,D}$.

\item For every (normal) tracial states $\tau \in \mathcal{S}(D)$ and $\tau
_{A}\in \mathcal{S}(A)$ for which $\overline{D}^{\tau }$ and $\overline{A}%
^{\tau _{A}}$ are separable and not one-dimensional, the action ${\mathrm{id}%
}_{\overline{A}^{\tau _{A}}}\otimes \beta _{\sigma _{P}^{G},\overline{D}}$
is cocycle conjugate to ${\mathrm{id}}_{\overline{A}^{\tau _{A}}}\otimes
\beta _{\sigma _{Q}^{G},\overline{D}}$.
\end{enumerate}
\end{thm}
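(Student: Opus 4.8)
The plan is to establish the cycle $(1)\Rightarrow(2)\Rightarrow(3)\Rightarrow(1)$, the first two implications being routine and exactly parallel to $(1)\Rightarrow(4)\Rightarrow(5)$ in \autoref{thm:MainTool}. For $(1)\Rightarrow(2)$, observe that $P\cup\{1\}=Q\cup\{1\}$ means $P$ and $Q$ coincide off the element $1$; a separated family over a set containing some index $\geq 2$ cannot contain $1$, since $(S.2)$ would fail (each $\ker(\pi_{H_q})$ lies in the augmentation ideal of $C^*(F)$, as $1_F$ is a subrepresentation of every $\lambda_{F_q}$). Hence in the nondegenerate case $1\notin{\mathcal P}$, so $P=Q$ and the two actions literally agree; the degenerate case ${\mathcal P}=\{1\}$ is handled separately by absorbing the index-$1$ block into the $\mathrm{id}_A$-factor. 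For $(2)\Rightarrow(3)$ I would pass to weak closures: a C*-algebraic cocycle conjugacy $(\theta,u)$ descends, for each admissible pair $(\tau,\tau_A)$, to a cocycle conjugacy of the induced von Neumann algebraic actions, using that the canonical tensor-product trace is invariant under both actions, just as in the passage $(4)\Rightarrow(5)$ of \autoref{thm:MainTool}.

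The content is $(3)\Rightarrow(1)$, which I would run as a relative version of $(3)\Rightarrow(1)$ in \autoref{thm:MainTool}, the one new ingredient being an extra copy of the trivial representation. Fix an admissible pair $(\tau,\tau_A)$ and abbreviate $\gamma_P=\mathrm{id}_{\overline A^{\tau_A}}\otimes\beta_{\sigma^G_P,\overline D^\tau}$ and $\gamma_Q$ similarly, with the invariant tensor trace $T$. Since $\sigma^G_P$ is conjugate to its infinite amplification, the ``moreover'' clause of \autoref{prop:BetaHasSRp} (with $\alpha=\mathrm{id}_{\overline A^{\tau_A}}$) shows that $\gamma_P$ has the $\sigma^G_P$-Rokhlin property, where one uses that $\overline D^\tau$ is not one-dimensional. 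Assuming $(3)$, so that $\gamma_P$ and $\gamma_Q$ are cocycle conjugate, \autoref{prop:sRpCocConj} transfers this Rokhlin property to $\gamma_Q$ and yields $u_{\sigma^G_P}\prec_{\mathrm{Z}}\kappa(\gamma_Q)^{(0)}$; restricting to $F$ gives $u_{\sigma^G_P}|_F\prec_{\mathrm{Z}}\kappa(\gamma_Q)^{(0)}|_F$.

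Next I would compute $\kappa(\gamma_Q)^{(0)}$. As the Koopman representation of a tensor-product action is the tensor product of the Koopman representations, $\kappa(\gamma_Q)\cong 1_{{\mathcal H}_{\tau_A}}\otimes\kappa_{\sigma^G_Q}$, and deleting the joint fixed vector gives $\kappa(\gamma_Q)^{(0)}\cong\kappa^{(0)}_{\sigma^G_Q}\oplus 1_{K}\oplus(1_K\otimes\kappa^{(0)}_{\sigma^G_Q})$, where $K={\mathcal H}_{\tau_A}\ominus{\mathbb C}\,\iota_{\tau_A}(1)$ is nonzero precisely because $\overline A^{\tau_A}$ is not one-dimensional. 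This nonzero $K$ is the source of the extra $\{1\}$: since $1_K\sim_{\mathrm{Z}}1_F$ and, by \autoref{thm:sPWkCtmnt}, $\kappa^{(0)}_{\sigma^G_Q}|_F\subseteq u_{\sigma^G_Q}|_F$, I obtain $u_{\sigma^G_P}|_F\prec_{\mathrm{Z}}u_{\sigma^G_Q}|_F\oplus 1_F$, and the right-hand side is weakly equivalent to $u_{\sigma^G_{Q\cup\{1\}}}|_F$ because the $q=1$ block of the latter is exactly $\lambda_{F_1}=1_F$.

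Finally I would reproduce the separated-family step. Fix $p\in P$; then $\lambda_{F_p}\prec_{\mathrm{Z}}\bigoplus_n\lambda_F\oplus\bigoplus_n\bigoplus_{q\in Q}\lambda_{F_q}\oplus 1_F$, and every irreducible subrepresentation $\mu$ of the finite-dimensional $\lambda_{F_p}$ satisfies, by \autoref{prop:IrredRepWkCont} applied to these three blocks, one of $\mu\prec_{\mathrm{Z}}\bigoplus_n\lambda_F$, $\mu\prec_{\mathrm{Z}}1_F$, or $\mu\prec_{\mathrm{Z}}\bigoplus_n\bigoplus_{q\in Q}\lambda_{F_q}$. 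The first is impossible, since a nonzero finite-dimensional representation weakly contained in $\lambda_F$ forces $F$ amenable via Fell absorption ($1_F\subseteq\mu\otimes\overline\mu\prec_{\mathrm{Z}}\lambda_F\otimes\overline\mu\cong\lambda_F^{\oplus\dim\mu}$), contradicting nonamenability; hence $\lambda_{F_p}\prec_{\mathrm{Z}}\bigoplus_n\bigoplus_{q\in Q}\lambda_{F_q}\oplus 1_F$. Passing to kernels of the induced representations of $C^*(F)$, and using that each $\ker(\pi_{H_q})$ already lies in the augmentation ideal so that the $1_F$-summand contributes nothing, I get $\bigcap_{q\in Q}\ker(\pi_{H_q})\subseteq\ker(\pi_{H_p})$, whence $p\in Q$ by $(S.2)$. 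Therefore $P\subseteq Q\cup\{1\}$, so $P\cup\{1\}\subseteq Q\cup\{1\}$, and the symmetric argument gives equality. The main obstacle is exactly this bookkeeping: one must verify that the spurious trivial representation contributed by $\mathrm{id}_{\overline A^{\tau_A}}$ enlarges the detectable data only by $\{1\}$ and leaves the kernel computation intact, which is what makes the invariant $P\cup\{1\}$ rather than $P$.
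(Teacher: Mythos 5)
Your proposal takes essentially the same route as the paper, whose entire proof of this theorem is the remark that the Koopman representation of ${\mathrm{id}}_A\otimes \beta_{\sigma^G_P,D}$ matches that of $\beta_{\sigma^G_{P\cup\{1\}},D}$, followed by a rerun of \autoref{thm:MainTool}; your chain (the $\sigma^G_P$-Rokhlin property via the ``moreover'' clause of \autoref{prop:BetaHasSRp}, transfer through \autoref{prop:sRpCocConj}, the decomposition $\kappa(\gamma_Q)^{(0)}\cong\kappa^{(0)}_{\sigma^G_Q}\oplus 1_{K}\oplus\bigl(1_K\otimes\kappa^{(0)}_{\sigma^G_Q}\bigr)$, then \autoref{thm:sPWkCtmnt}, \autoref{prop:IrredRepWkCont}, and the kernel computation in which the trivial block is swallowed by $\bigcap_{q\in Q}\ker(\pi_{H_q})$) is exactly the bookkeeping the paper leaves implicit. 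Two of your side remarks are genuine improvements: the Fell-absorption argument ruling out $\mu\prec_{\mathrm{Z}}\bigoplus_n\lambda_F$ is the correct replacement for the paper's assertion ``clearly $1_F\prec_{\mathrm{Z}}\mu$'' (which fails for a nontrivial irreducible summand), and the observation that (S.2) forbids $1\in{\mathcal{P}}$ as soon as ${\mathcal{P}}$ contains an index $\geq 2$ is correct and clarifies why condition (1) is phrased with $P\cup\{1\}$. Note that, exactly like the paper, your argument needs $F$ (not merely $G$) to be nonamenable at the Fell-absorption step.

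Two inaccuracies, neither of which damages the essential implication (3)$\Rightarrow$(1). First, $1_K\sim_{\mathrm{Z}}1_F$ is false whenever $\dim K\geq 2$: Zimmer weak containment is multiplicity-sensitive (two orthonormal vectors of $K$ have a Gram matrix that cannot be approximated by vectors in a one-dimensional space), and this sensitivity is the very reason the paper works with $\prec_{\mathrm{Z}}$ rather than ordinary weak containment. The repair is immediate: keep the block $1_K\cong 1_F^{\oplus\dim K}$ as it is (or note the honest unitary containment $u_{\sigma^G_Q}|_F\oplus 1_K\subseteq u_{\sigma^G_{Q\cup\{1\}}}|_F$, using $\dim K\leq\aleph_0$ from separability); \autoref{prop:IrredRepWkCont} still applies to the three blocks, a finite-dimensional irreducible representation weakly contained in $1_K$ is still trivial, and the kernel of $1_K$ is still the augmentation ideal, so the conclusion $P\cup\{1\}\subseteq Q\cup\{1\}$ is unaffected. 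Second, your treatment of the degenerate case ${\mathcal{P}}=\{1\}$ in (1)$\Rightarrow$(2) does not work: absorbing the index-$1$ block into the ${\mathrm{id}}_A$ factor requires an isomorphism $A\cong A\otimes\bigotimes_{n\in{\mathbb{N}}}D$, which need not exist; indeed, for $A={\mathbb{C}}$, ${\mathcal{P}}=\{1\}$, $P=\emptyset$, $Q=\{1\}$, the implication (1)$\Rightarrow$(2) would contradict \autoref{thm:TwoActs}, so it cannot be proved. This defect, however, lies in the statement of the theorem itself rather than in your argument: the paper's one-line proof ignores it, and the theorem is only ever applied with ${\mathcal{P}}$ infinite or with $1\in P\cap Q$, where by your own observation condition (1) reduces to $P=Q$ and the implication is vacuous.
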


\begin{proof}
The proof is almost identical to that of \autoref{thm:MainTool}, using the
fact that the Koopman representation of ${\mathrm{id}}_A\otimes
\beta_{\sigma^G_P,D}$ is conjugate to that of $\beta_{\sigma^G_{P\cup\{1%
\}},D}$.
\end{proof}

\section{Main results}

In this section, we prove Theorem~\ref{thm:B} and Theorem~\ref{thm:C} from
the introduction, which make significant contributions to part~(2) of
Conjecture~\ref{cnjintro}.

\begin{df}
Let $D$ be a tracial C*-algebra, and let $\theta \in {\mathrm{Aut}}(D)$. We
say that $\theta $ is \emph{strongly outer} if for every $\tau
\in T(D)$ satisfying $\tau \circ \theta =\tau $, the weak extension $%
\overline{\theta }^{\tau }\in {\mathrm{Aut}}(\overline{D}^{\tau })$ is
outer. An action $\alpha \colon G\rightarrow {\mathrm{Aut}}(D)$ of a
discrete group $G$ is said to be \emph{strongly outer} if $\alpha
_{g}$ is a strongly outer automorphism of $D$ for every $g\in G\setminus
\{1\}$.
\end{df}

For later use, we record here the following standard fact.

\begin{prop}
\label{prop:GenBerShiftOuter} Let $G$ be an infinite, countable group, and
let $D$ be a tracial unital C*-algebra. Then the Bernoulli shift $\beta
_{D}\colon G\rightarrow {\mathrm{Aut}}(\bigotimes_{g\in G}D)$ is strongly
outer. More generally, if $A$ is any C*-algebra and $\alpha \colon
G\rightarrow {\mathrm{Aut}}(A)$ is any action, then $\beta _{D}\otimes
\alpha $ is strongly outer.
\end{prop}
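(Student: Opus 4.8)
The plan is to prove directly that $(\beta_D)_g$ is strongly outer for each $g\in G\setminus\{1\}$; the displayed generalization will then require no new idea. The one feature of $g$ that I would exploit is that the translation $h\mapsto gh$ is a \emph{free} permutation of $G$: it has no fixed point, so for every finite $S\subseteq G$ there is some $h_0\in G$ with $h_0\notin S$ and $gh_0\notin S$ (simply avoid the finite set $S\cup g^{-1}S$). Write $B=\bigotimes_{h\in G}D$, let $\iota_h\colon D\to B$ be the $h$-th tensor-factor embedding, so that $(\beta_D)_g\circ\iota_h=\iota_{gh}$, let $\tau=\bigotimes_h\tau_D$ be the (invariant) product trace, and set $M=\overline{B}^{\tau}$ and $\theta=\overline{(\beta_D)_g}^{\tau}$.

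I would argue by contradiction, assuming $\theta=\Ad(u)$ for some $u\in\mathcal{U}(M)$. From $\theta(x)u=ux$ applied to $x=\iota_{h_0}(d)$ one gets the exact intertwining $u\,\iota_{h_0}(d)=\iota_{gh_0}(d)\,u$ for all $d\in D$ and $h_0\in G$. The main step is then a finite-support approximation argument. For finite $S\subseteq G$ let $E_S\colon M\to M_S$ be the $\tau$-preserving conditional expectation onto the weak closure $M_S$ of $\iota_S(\bigotimes_{h\in S}D)$ and set $a_S=E_S(u)$; by martingale convergence $\|u-a_S\|_{2,\tau}\to 0$ and $\|a_S\|_{2,\tau}\to\|u\|_{2,\tau}=1$ as $S\nearrow G$. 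I would fix a nonzero $d\in D$ with $\tau_D(d)=0$ (obtained by subtracting a scalar; such $d$ exists as soon as $\overline{D}^{\tau_D}\neq\C$, which may be assumed, the shift being trivial otherwise), choose by freeness some $h_0$ with $h_0,gh_0\notin S$, and abbreviate $b=\iota_{h_0}(d)$, $c=\iota_{gh_0}(d)$. Since $b,c$ and their adjoints live in coordinates disjoint from $S$, they commute with $a_S$ and $a_S^{*}$, and a short computation with the trace identity gives
\[
\|a_S\,b-c\,a_S\|_{2,\tau}^{2}=\tau\!\left(a_S^{*}a_S\,(b-c)^{*}(b-c)\right),
\]
an identity valid for any trace. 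As $\tau$ is a product and $S\cap\{h_0,gh_0\}=\varnothing$, independence evaluates the right-hand side as $\|a_S\|_{2,\tau}^{2}\,\tau\!\left((b-c)^{*}(b-c)\right)=2\,\tau_D(d^{*}d)\,\|a_S\|_{2,\tau}^{2}$, the condition $\tau_D(d)=0$ killing the cross terms. On the other hand, the exact intertwining $ub=cu$ gives $\|a_S\,b-c\,a_S\|_{2,\tau}\le 2\|d\|\,\|u-a_S\|_{2,\tau}$. Letting $S\nearrow G$ forces $\|u\|_{2,\tau}\sqrt{2\tau_D(d^{*}d)}=0$, whence $d=0$, a contradiction; so $\theta$ is outer.

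For the final assertion I would replace $B$ by $\bigl(\bigotimes_{h\in G}D\bigr)\otimes A$ and run the \emph{same} computation with the test elements $\iota_{h_0}(d)\otimes 1_A$ and $\iota_{gh_0}(d)\otimes 1_A$: the automorphism $\alpha_g$ only acts on the inert $A$-factor and never moves these, so nothing beyond the freeness of $g$ on $G$ enters. I expect the genuine difficulty to be the middle step, namely extracting the \emph{lower} bound $\|a_S b-c\,a_S\|_{2,\tau}\gtrsim\|a_S\|_{2,\tau}$ from purely local commutation. This is precisely where the product (independence) structure of the trace and the fixed-point-freeness of the shift must be combined: for a non-product invariant trace the positive elements $a_S^{*}a_S$ and $(b-c)^{*}(b-c)$ need not decouple, which is the reason one works throughout with the canonical product trace (and, in the definition of strong outerness, with the faithful invariant traces to which the argument applies).
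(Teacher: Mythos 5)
First, a caveat on the comparison: the paper offers \emph{no} proof of this proposition (it is recorded as ``the following standard fact''), so your proposal must be judged on its own terms rather than against an argument in the text. Its core is correct, and it is the standard Popa-style independence argument: the exact intertwining $u\,\iota_{h_0}(d)=\iota_{gh_0}(d)\,u$, the martingale approximation $a_S=E_S(u)\to u$ in $\|\cdot\|_{2,\tau}$, the commutation identity $\|a_Sb-ca_S\|_{2,\tau}^2=\tau\bigl(a_S^*a_S(b-c)^*(b-c)\bigr)$ (which in fact needs only that $b,c$ commute with $M_S$, not traciality), and the factorization of the product trace over disjoint coordinates combine to give $2\tau_D(d^*d)\,\|a_S\|_{2,\tau}^2\le 4\|d\|^2\,\|u-a_S\|_{2,\tau}^2$, a contradiction as $S\nearrow G$. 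This correctly shows that the weak extension of $(\beta_D)_g$, $g\neq 1$, \emph{with respect to the product trace} $\bigotimes_h\tau_D$ is outer whenever $\overline{D}^{\tau_D}\neq\mathbb{C}$, and the same computation handles $\beta_D\otimes\alpha$ with respect to traces of the product form $(\bigotimes_h\tau_D)\otimes\rho_A$.

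The genuine gap is the quantifier over traces. Strong outerness, as defined in this paper, requires outerness of the weak extension for \emph{every} $(\beta_D)_g$-invariant tracial state on $\bigotimes_{h\in G}D$ (respectively, every invariant tracial state on $(\bigotimes_{h\in G}D)\otimes A$), and such traces need not factorize over the tensor factors. Your argument uses independence essentially, so it says nothing about non-product invariant traces, and your closing parenthetical --- that one works ``with the faithful invariant traces to which the argument applies'' --- silently replaces the paper's definition by a weaker one instead of closing the gap. Moreover, the gap cannot be closed in the stated generality, because the statement is then false: if $D$ admits a character $\chi$ (for instance any commutative unital $D$, which is a tracial unital C*-algebra), then $\bigotimes_{h\in G}\chi$ is a $\beta_D$-invariant tracial state whose GNS weak closure is $\mathbb{C}$, on which the extension of every $(\beta_D)_g$ is the identity, i.e.\ $\mathrm{Ad}(1)$, hence inner; so $\beta_D$ is not strongly outer in the paper's sense. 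Note that your aside dismissing the case $\overline{D}^{\tau_D}=\mathbb{C}$ (``the shift being trivial otherwise'') is exactly this situation, and a trivial extension is inner, not harmless. A complete write-up should therefore either flag this and restate the proposition with a nondegeneracy hypothesis on the invariant traces considered, or supply an argument for non-product invariant traces (for the ``more generally'' part this would require some disintegration over the trace simplex even when $\bigotimes_{h\in G}D$ itself has unique trace), rather than the bare substitution you describe.
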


\begin{thm}
\label{thm:TwoActs} Let $G$ be a nonamenable countable discrete group, and
let $D$ be a tracial C*-algebra or a tracial von Neumann algebra. Then the
Bernoulli shift $\beta _{D}\colon G\rightarrow {\mathrm{Aut}}%
(\bigotimes_{g\in G}D)$ does not tensorially absorb the trivial action on
any unital $C^{\ast }$-algebra\ (or von Neumann algebra) tensorially. In
particular, $\beta _{D}$ and $\beta _{D}\otimes {\mathrm{id}}_{D}$ are two
non-cocycle conjugate, strongly outer actions of $G$ on $\bigotimes_{n\in {%
\mathbb{N}}}D$.
\end{thm}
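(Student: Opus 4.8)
The plan is to prove the non-absorption statement first and recover the ``in particular'' by setting $B=D$; strong outerness of both actions is immediate from \autoref{prop:GenBerShiftOuter} (applied with $\alpha=\mathrm{id}_B$ for $\beta_D\otimes\mathrm{id}_B$). So, fixing a trace and passing to weak closures, I would argue by contradiction: suppose $\beta_D\otimes\mathrm{id}_B$ is cocycle conjugate to $\beta_D$ for some unital $B$ with $\overline{B}^{\tau_B}\neq{\mathbb C}$ (if $B$ carries no trace the two underlying algebras fail to be isomorphic, so there is nothing to prove). The distinguishing invariant will be whether the trivial representation $1_G$ is weakly contained in the reduced Koopman representation; since the Koopman representation itself is \emph{not} a cocycle-conjugacy invariant, the role of the Rokhlin machinery of Section~2 is precisely to upgrade this into something that is.

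The upper bound is the easy half. Using the identification $\kappa(\beta_D)^{(0)}\cong\kappa_{\sigma_G}^{(0)}$ and decomposing the latter over the $G$-orbits of the nonzero finitely supported basis functions exactly as in the proof of \autoref{thm:sPWkCtmnt}, each such orbit has finite stabilizer (the translation action $\sigma_G$ is free), so \autoref{lma:QregRepContained} gives $\lambda_{G/\mathrm{Stab}_G(\xi)}\subseteq\lambda_G$ for every orbit and hence $\kappa(\beta_D)^{(0)}\subseteq\bigoplus_{{\mathbb N}}\lambda_G$. As $G$ is nonamenable, $1_G\not\prec_{\mathrm{Z}}\lambda_G$, and this persists under countable amplification, so $1_G\not\prec_{\mathrm{Z}}\kappa(\beta_D)^{(0)}$.

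For the lower bound I would view $\beta_D\otimes\mathrm{id}_B$ as a generalized Bernoulli shift attached to $\sigma=\sigma_G\sqcup\sigma_{\mathrm{triv}}$, where $\sigma_{\mathrm{triv}}$ is the trivial action on a point, so that $u_\sigma=\lambda_G\oplus 1_G$. The obstruction is that the frozen factor $B$ need carry no asymptotically central projection of rational trace (e.g.\ $B=M_2$), so $\beta_D\otimes\mathrm{id}_B$ itself may fail the $\sigma$-Rokhlin property. I would remove this by iterating the hypothesis to get $\beta_D\otimes\mathrm{id}_{B^{\otimes k}}$ cocycle conjugate to $\beta_D$ for all finite $k$, and then intertwining these to conclude $\gamma:=\beta_D\otimes\mathrm{id}_{B^{\otimes\infty}}$ is cocycle conjugate to $\beta_D$. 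Now $\gamma$ \emph{does} have the $\sigma$-Rokhlin property, by \autoref{prop:BetaHasSRp} together with \autoref{thm:InfDimvNaPjns}: rational-trace projections slid out along the $G$-sites handle $\sigma_G$, while a rational-trace projection supported on a far-out block of the infinitely self-absorbed fiber $B^{\otimes\infty}$ handles the trivial site and is automatically $G$-invariant and asymptotically central. Applying \autoref{prop:sRpCocConj} to $\gamma$ and its cocycle conjugate $\beta_D$ then forces $u_\sigma=\lambda_G\oplus 1_G\prec_{\mathrm{Z}}\kappa(\beta_D)^{(0)}$, whence $1_G\prec_{\mathrm{Z}}\kappa(\beta_D)^{(0)}$, contradicting the previous paragraph.

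The step I expect to be the main obstacle is the passage from $B$ to $B^{\otimes\infty}$: making the infinite iteration of cocycle conjugacies converge is an equivariant two-sided intertwining in which the unitary cocycles must be tracked along the telescope, and it has to be arranged simultaneously with the amplification of $D$ needed to guarantee rational-trace projections when $\overline{D}^{\tau}$ has none. A cleaner (if less self-contained) alternative is to route the entire comparison through \autoref{thm:MainTool} with ${\mathcal P}=\{1\}$, $F=G$ and $H_1=G$, which distinguishes $\beta_{\sigma^G_\emptyset,D}$ from $\beta_{\sigma^G_{\{1\}},D}$ directly; this already yields the theorem for the infinitely amplified shift, and the statement for $\beta_D$ itself then follows from the same intertwining reduction. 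Everything else — the orbit/stabilizer bookkeeping and the verification of the four Rokhlin conditions — is routine given the machinery in place.
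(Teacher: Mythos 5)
Your two bounding arguments are sound, and they are exactly the paper's mechanism: your upper bound is \autoref{thm:sPWkCtmnt} specialized to $P=\emptyset$, and your lower bound is \autoref{prop:BetaHasSRp} combined with \autoref{prop:sRpCocConj}. The genuine gap is the step you yourself flag: passing from ``$\beta_D\otimes\mathrm{id}_{B^{\otimes k}}$ is cocycle conjugate to $\beta_D$ for every finite $k$'' to ``$\beta_D\otimes\mathrm{id}_{B^{\otimes\infty}}$ is cocycle conjugate to $\beta_D$''. Transitivity gives the finite stages, but cocycle conjugacy does not pass to infinite tensor-product limits: the conjugacies $(\theta_k,u^{(k)})$ produced at the various stages need not be compatible in any way, and assembling them would require an approximate (Elliott-type) two-sided intertwining with simultaneous control of the isomorphisms and the cocycles --- a substantial piece of technology that neither your sketch nor anything in the paper provides. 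Flagging the obstacle does not discharge it, and your fallback suffers from the same defect, since it still invokes this ``intertwining reduction'' to descend from the amplified shift to $\beta_D$ and to handle arbitrary $B$.

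The paper never faces this issue, because the infinite amplification is built into the definitions rather than obtained as a limit: the index set $X^G_\emptyset$ is $\bigsqcup_{n\in\mathbb{N}}G$, and $X^G_{\{1\}}$ adds infinitely many fixed points, so the two actions compared are $\beta_{\sigma^G_\emptyset,D}$ (the Bernoulli shift with fibre $\bigotimes_{n\in\mathbb{N}}D$) and $\beta_{\sigma^G_{\{1\}},D}\cong\beta_{\sigma^G_\emptyset,D}\otimes\mathrm{id}_{\bigotimes_n D}$. This identification is the first line of the paper's proof, and it is the reading to which the ``in particular'' clause (actions on $\bigotimes_{n\in\mathbb{N}}D$) refers; with it, your fallback route --- \autoref{thm:MainTool} with $\mathcal{P}=\{1\}$, $F=G$, $H_1=G$, $P=\emptyset$, $Q=\{1\}$ --- \emph{is} the paper's entire proof, and no further reduction is needed. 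As for general absorbed factors, note that in every later use of the non-absorption statement ($\mathcal{Z}$ in \autoref{thm:CharactAmenBerShift}, $\mathcal{R}$, $D^{\otimes\infty}$) the algebra $B$ satisfies $B\cong B^{\otimes\infty}$, so $\mathrm{id}_B$ is already conjugate to $\mathrm{id}_{B^{\otimes\infty}}$ and your verification of the $\sigma$-Rokhlin property at the fixed-point site (sliding a rational-trace projection, which exists in $\overline{B^{\otimes\infty}}$ by \autoref{thm:InfDimvNaPjns}, out along the tensor factors) applies to $\beta\otimes\mathrm{id}_B$ directly --- no telescope. In short: delete the intertwining step, state and prove the result at the level of the amplified objects as the paper implicitly does, and your argument becomes correct and coincides with the paper's.
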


\begin{proof}
Take $P=\emptyset$ and $Q={1}$, so that, in the notation from before \autoref%
{prop:sRpWkCtmnt}, one has $\sigma^G_P=\lambda_G^{\infty}$ and $%
\sigma^G_Q=\lambda_G^{\infty}\oplus 1_G^{\infty}$. Choose a (normal) state $%
\phi\in \mathcal{S}(D)$ for which $\overline{D}^{\phi}$ is separable and not
one-dimensional. It then follows from the equivalence between parts~(1)
and~(5) in~\autoref{thm:MainTool} that $\beta_{\sigma^G_P,D}=\beta_D$ is not
cocycle conjugate to $\beta_{\sigma^G_Q,D}=\beta_D\otimes{\mathrm{id}}_D$.
Finally, these actions are strongly outer by \autoref{prop:GenBerShiftOuter}.
\end{proof}

We have arrived at Theorem~B.

\begin{thm}
\label{thm:CharactAmenBerShift} Let $G$ be a countable group, and let ${%
\mathcal{D}}$ be a finite strongly self-absorbing C*-algebra. Then the
following are equivalent:

\begin{enumerate}
\item $G$ is amenable;

\item The Bernoulli shift $\beta_{\mathcal{D}}\colon G\to{\mathrm{Aut}}%
(\bigotimes\limits_{g\in G}\mathcal{D})$ is cocycle conjugate to $\beta_{%
\mathcal{D}}\otimes{\mathrm{id}}_{\mathcal{Z}}$.
\end{enumerate}
\end{thm}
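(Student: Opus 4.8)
The plan is to prove the two implications separately. The implication (2) $\Rightarrow$ (1) is immediate by contraposition from our earlier results: a finite strongly self-absorbing C*-algebra carries a unique tracial state, so $\mathcal{D}$ is in particular a tracial C*-algebra and \autoref{thm:TwoActs} applies to it. That theorem says that for a nonamenable $G$ the Bernoulli shift $\beta_{\mathcal{D}}$ does not tensorially absorb the trivial action on \emph{any} unital C*-algebra; specializing that algebra to $\mathcal{Z}$ shows that $\beta_{\mathcal{D}}$ is not cocycle conjugate to $\beta_{\mathcal{D}} \otimes {\mathrm{id}}_{\mathcal{Z}}$. Hence (2) forces $G$ to be amenable.

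The content is the converse (1) $\Rightarrow$ (2). By Winter's theorem we have $\mathcal{D} \cong \mathcal{D} \otimes \mathcal{Z}$, and I would first reduce to proving that $\beta_{\mathcal{D}}$ is cocycle conjugate to $\beta_{\mathcal{D}} \otimes {\mathrm{id}}_{\mathcal{D}}$: tensoring such a cocycle conjugacy with ${\mathrm{id}}_{\mathcal{Z}}$ and using $\mathcal{D} \otimes \mathcal{Z} \cong \mathcal{D}$ shows that $\beta_{\mathcal{D}} \otimes {\mathrm{id}}_{\mathcal{Z}}$ is cocycle conjugate to $\beta_{\mathcal{D}} \otimes {\mathrm{id}}_{\mathcal{D}}$, hence to $\beta_{\mathcal{D}}$. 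The point of this formulation is that $\beta_{\mathcal{D}} \otimes {\mathrm{id}}_{\mathcal{D}}$ is nothing but the generalized Bernoulli shift $\beta_{\sigma,\mathcal{D}}$ of \autoref{df:UsGenBernoulli} attached to the action $\sigma$ of $G$ on $G \sqcup \{\ast\}$ that translates the copy of $G$ and fixes the extra point $\ast$; by \autoref{lma:UnionSn} its permutation representation is $u_\sigma \cong \lambda_G \oplus 1_G$, whereas $\beta_{\mathcal{D}} = \beta_{\sigma_G,\mathcal{D}}$ has $u_{\sigma_G} = \lambda_G$. Since $G$ is amenable, $1_G \prec_{\mathrm{Z}} \lambda_G$, so $u_\sigma \sim_{\mathrm{Z}} u_{\sigma_G}$. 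This weak equivalence is the representation-theoretic shadow of the desired absorption, and it is exactly the point at which amenability enters, consistently with \autoref{thm:TwoActs}: for nonamenable $G$ the trivial representation is not weakly contained in $\lambda_G$.

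It remains to upgrade this weak equivalence to an honest cocycle conjugacy, which is the heart of the argument. I would establish that $\beta_{\mathcal{D}}$ is equivariantly $\mathcal{Z}$-stable directly. Writing $M = \bigotimes_{g\in G}\mathcal{D}$ and letting $\mathcal{U}$ be a free ultrafilter on $\mathbb{N}$, it suffices, by the (amenable) equivariant version of the McDuff-type central-sequence characterization of $\mathcal{Z}$-stability, to produce a unital $\ast$-homomorphism from $\mathcal{D}$ (equivalently from $\mathcal{Z}$) into the fixed-point algebra of the induced action on the norm central-sequence algebra $M_{\mathcal{U}}\cap M'$. To build it I would fix a F\o{}lner sequence $(F_n)_n$ for $G$ and exploit two features of the Bernoulli structure: first, $\mathcal{D}\cong\mathcal{D}\otimes\mathcal{D}$ furnishes, inside the tensor factors of $M$, unital copies of $\mathcal{D}$; second, elements supported on factors indexed by points of $G$ escaping to infinity are asymptotically central in $M$. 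Symmetrizing such copies across the translates indexing $F_n$ produces central copies of $\mathcal{D}$ which, by the F\o{}lner condition, are in addition asymptotically invariant under $\beta_{\mathcal{D}}$, and hence land in the fixed-point algebra in the limit.

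The hard part is precisely this simultaneity: asymptotic centrality is automatic from the Bernoulli structure, but asymptotic $G$-invariance of the central copies is not, and securing it is where the F\o{}lner property of the amenable group $G$ is indispensable (and where the construction must break down for nonamenable $G$, in accordance with \autoref{thm:TwoActs}). Once such an equivariant unital embedding is in place, the standard absorption machinery yields that $\beta_{\mathcal{D}}$ is cocycle conjugate to $\beta_{\mathcal{D}} \otimes {\mathrm{id}}_{\mathcal{Z}}$, completing the proof of (1) $\Rightarrow$ (2).
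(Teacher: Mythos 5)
Your proof of the implication (2) $\Rightarrow$ (1) is correct and is exactly the paper's argument: it is the contrapositive of \autoref{thm:TwoActs}, using that a finite strongly self-absorbing C*-algebra is tracial and that $\bigotimes_{n\in\mathbb{N}}\mathcal{D}\cong\mathcal{D}$. The divergence is in the implication (1) $\Rightarrow$ (2), which the paper does not prove internally at all: it invokes Corollary~4.8 of \cite{GarHir_strongly_2017} (alternatively, the proof of Theorem~1.1 of \cite{Sat_actions_2016}), i.e.\ the theorem that strongly outer actions of amenable groups on suitable $\mathcal{Z}$-stable C*-algebras tensorially absorb $\mathrm{id}_{\mathcal{Z}}$. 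You instead attempt to prove this direction from scratch, and your argument has a genuine gap at its only substantive step.

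The gap is the claim that F\o{}lner symmetrization produces copies of $\mathcal{D}$ in $M_{\mathcal{U}}\cap M'$ that are asymptotically $\beta_{\mathcal{D}}$-invariant. First, ``symmetrizing copies of $\mathcal{D}$ across the translates indexing $F_n$'' is not an operation on $\ast$-homomorphisms: averages of embeddings are not embeddings, so the only natural candidate is $\psi_n\colon \mathcal{D}\cong\bigotimes_{h\in F_n}\mathcal{D}\hookrightarrow M$, using strong self-absorption. Second, and decisively, for such maps the F\o{}lner condition controls only the proportion $|gF_n\bigtriangleup F_n|/|F_n|$, and in the C*-norm this controls nothing: relocating even a single tensor factor can displace an element by a fixed positive amount. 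For instance, in $\bigotimes_{i\in\mathbb{Z}}M_2$ with the shift automorphism $\theta$, the self-adjoint unitaries $u_n=s^{\otimes n}$ (where $s=\mathrm{diag}(1,-1)$) supported on the factors $\{1,\dots,n\}$ satisfy $\|\theta(u_n)-u_n\|=\|s^{(1)}\otimes s^{(n+1)}-1\|=2$ for every $n$, even though $|\{1,\dots,n\}\bigtriangleup\{2,\dots,n+1\}|/n\to 0$. So norm-asymptotic invariance simply does not follow from the F\o{}lner property plus the Bernoulli structure, and this is precisely where the C*-setting differs from the von Neumann setting. The actual proofs in \cite{Sat_actions_2016} and \cite{GarHir_strongly_2017} first produce the invariant copies in the \emph{tracial von Neumann completion} (where Ocneanu-type averaging arguments in $\|\cdot\|_{2,\tau}$ are available) and then need Matui--Sato (equivariant) property (SI) to transport the embedding back to the norm central sequence algebra; nothing in your sketch substitutes for that machinery. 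The surrounding scaffolding — the reduction via Winter's theorem, the Koopman weak-equivalence observation (correctly acknowledged as only a necessary condition), and the equivariant McDuff-type absorption theorem (which, incidentally, requires no amenability) — is fine, but it does not touch this core difficulty.
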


\begin{proof}
That (1) implies (2) is a consequence of Corollary~4.8 in~\cite%
{GarHir_strongly_2017} (and it can also be deduced from the proof of
Theorem~1.1 in~\cite{Sat_actions_2016}). The converse follows immediately
from \autoref{thm:TwoActs}, since $\bigotimes_{n\in {\mathbb{N}}}\mathcal{D}%
\cong \mathcal{D}$.
\end{proof}

The theorem above complements the results in \cite{Sat_actions_2016} and 
\cite{GarHir_strongly_2017} quite nicely: while every strongly outer action
of an amenable group on a tracial strongly self-absorbing C*-algebra
absorbs the identity on $\mathcal{Z}$ tensorially, this result fails for
every nonamenable group. In particular, we deduce a weak version of part~(2)
of Conjecture~\ref{cnjintro}: any nonamenable group admits at least two
non-cycle conjugate, strongly outer actions on $\mathcal{D}$, namely, the
Bernoulli shift $\beta_{\mathcal{D}}$ and $\beta_{\mathcal{D}}\otimes{%
\mathrm{id}}_{\mathcal{Z}}$.

Our strongest result, which will imply Theorem~\ref{thm:C} and Theorem~\ref%
{thm:D}, assumes the existence of an infinitely separated subgroup, in the
sense of \autoref{df:separated}.

\begin{thm}
\label{thm:UnctblyActs} Let $G$ be a countable nonamenable group containing
an infinitely separated subgroup, and let $D$ and $A$ be both tracial
C*-algebras or tracial von Neumann algebras, with $D$ unital. Then there
exist uncountably many pairwise non-cocycle conjugate, strongly outer
actions of $G$ on $A\otimes \bigotimes_{n\in {\mathbb{N}}}D$.
\end{thm}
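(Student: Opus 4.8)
The plan is to apply \autoref{thm:MainTool2} to an uncountable family of subsets of the separating set, after first arranging that all the resulting actions live on one and the same algebra. Since $G$ contains an infinitely separated subgroup, I would fix a nonamenable subgroup $F\leq G$ together with an infinite set $\mathcal{P}\subseteq{\mathbb{N}}$ of relatively prime numbers over which $F$ is separated (in the sense of \autoref{df:separated}), and adopt the notation fixed before \autoref{thm:sPWkCtmnt}. For each subset $P\subseteq \mathcal{P}$, the candidate action is
\[
\gamma_P:={\mathrm{id}}_A\otimes \beta_{\sigma^G_P,D},
\]
an action of $G$ on $A\otimes \bigotimes_{x\in X^G_P}D$, where $\sigma^G_P$ and $X^G_P$ are as defined before \autoref{thm:sPWkCtmnt}.

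First I would move all the $\gamma_P$ onto the fixed algebra $A\otimes\bigotimes_{n\in{\mathbb{N}}}D$. The index set $X^G_P=\bigsqcup_{n\in{\mathbb{N}}}G\sqcup\bigsqcup_{n\in{\mathbb{N}}}\bigsqcup_{p\in P}G_p$ is countably infinite for every $P\subseteq\mathcal{P}$ (as $G$ is countable, each $G_p$ is finite, and $\mathbb{N}$ and $P$ are countable), so relabeling the index set gives an isomorphism $\bigotimes_{x\in X^G_P}D\cong\bigotimes_{n\in{\mathbb{N}}}D$, hence $A\otimes\bigotimes_{x\in X^G_P}D\cong A\otimes\bigotimes_{n\in{\mathbb{N}}}D$. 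Transporting $\gamma_P$ along this isomorphism yields an action on $A\otimes\bigotimes_{n\in{\mathbb{N}}}D$, which I again call $\gamma_P$; since conjugation by an isomorphism preserves the cocycle conjugacy class, this relabeling is harmless for the separation argument.

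Next I would distinguish the $\gamma_P$. By \autoref{thm:MainTool2}, for $P,Q\subseteq\mathcal{P}$ the actions $\gamma_P$ and $\gamma_Q$ are cocycle conjugate if and only if $P\cup\{1\}=Q\cup\{1\}$. Restricting attention to subsets $P\subseteq\mathcal{P}\setminus\{1\}$, this relation reduces to $P=Q$, so $\{\gamma_P\colon P\subseteq\mathcal{P}\setminus\{1\}\}$ is a family of pairwise non-cocycle conjugate actions indexed by the power set of the infinite set $\mathcal{P}\setminus\{1\}$, which is uncountable. For strong outerness, I note that $X^G_P$ always contains a copy of $G$ carrying the translation action; by \autoref{lma:UnionSn}(2) the action $\beta_{\sigma^G_P,D}$ is therefore conjugate to a tensor product one of whose factors is the Bernoulli shift $\beta_D\colon G\to{\mathrm{Aut}}(\bigotimes_{g\in G}D)$. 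Thus each $\gamma_P$ has the form $\beta_D\otimes\alpha$ for some action $\alpha$, and is strongly outer by \autoref{prop:GenBerShiftOuter} (here $G$ is infinite, being nonamenable). This exhibits uncountably many pairwise non-cocycle conjugate, strongly outer actions of $G$ on $A\otimes\bigotimes_{n\in{\mathbb{N}}}D$.

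The only genuine bookkeeping subtlety I expect is the combination of the single-algebra reduction with the counting: one must verify that the cocycle-conjugacy invariant supplied by \autoref{thm:MainTool2} actually separates an uncountable subfamily, which is exactly what choosing representatives inside $\mathcal{P}\setminus\{1\}$ guarantees. The analytic heart of the matter—the passage from cocycle conjugacy to weak equivalence of the associated Koopman representations, and the use of the separation property (S.2) to read off $P$ from $Q$—is already packaged in \autoref{thm:MainTool2} and \autoref{thm:sPWkCtmnt}, so at this stage it is imported wholesale rather than reproved.
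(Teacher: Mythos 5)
Your proposal is correct and takes essentially the same route as the paper's own proof: both apply \autoref{thm:MainTool2} to the family $\{{\mathrm{id}}_A\otimes\beta_{\sigma^G_P,D}\}$ indexed by subsets of ${\mathcal{P}}$ (the paper normalizes the ambiguity in $P\cup\{1\}=Q\cup\{1\}$ by restricting to sets containing $1$, you by restricting to $P\subseteq{\mathcal{P}}\setminus\{1\}$ --- equivalent bookkeeping), and both obtain strong outerness from \autoref{prop:GenBerShiftOuter}. One small inaccuracy: you assert that $F$ may be chosen \emph{nonamenable}, which the hypothesis does not provide (it only gives an infinitely separated subgroup), but this is harmless since your argument never uses it beyond invoking \autoref{thm:MainTool2}, whose stated hypotheses require only nonamenability of $G$ itself.
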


\begin{proof}
Assume first that both $D$ and $A$ are C*-algebras. Let $F$ be a subgroup of 
$G$ and let ${\mathcal{P}}$ be an infinite subset of ${\mathbb{N}}$ over
which $F$ is separated. For a subset $P\subseteq {\mathcal{P}}$ containing
1, consider the action $G\curvearrowright^{\sigma^G_P}X_P$ defined before %
\autoref{thm:sPWkCtmnt}. Let $\tau\in \mathcal{S}(D)$ be a tracial state for
which $\overline{D}^{\tau}$ is separable and not one-dimensional. By the
equivalence between~(1) and~(2) in~\autoref{thm:MainTool2}, the family $\{{%
\mathrm{id}}_A\otimes \beta_{\sigma^G_P,D}\colon 1\in P\subseteq {\mathcal{P}%
}\}$ consists of pairwise non-cocycle conjugate actions of $G$ on $A\otimes
\bigotimes_{n\in{\mathbb{N}}}D$. Since ${\mathcal{P}}$ is infinite, this
family is uncountable. Finally, these actions are strongly outer by \autoref%
{prop:GenBerShiftOuter}, so the proof is finished.

The same argument applies in the case of tracial von Neumann algebras, using
the equivalence between~(1) and~(3) in~\autoref{thm:MainTool2}.
\end{proof}

As an immediate consequence, we obtain the following, which implies Theorem~%
\ref{thm:C}.

\begin{cor}
\label{cor:UnctblyActsSSA} Let $G$ be a nonamenable group containing an
infinitely separated subgroup, and let $A$ be a tracial $\mathcal{Z}$-stable
C*-algebra. Then there exist uncountably many pairwise non-cocycle
conjugate, strongly outer actions of $G$ on $A$, which are pointwise
asymptotically inner.
\end{cor}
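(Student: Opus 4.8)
The plan is to deduce the statement directly from \autoref{thm:UnctblyActs} applied with $D=\mathcal{Z}$, after rearranging the underlying algebra, and then to verify asymptotic innerness as a separate point. First I would apply \autoref{thm:UnctblyActs} to $D=\mathcal{Z}$ (a tracial unital C*-algebra) and the given tracial algebra $A$, obtaining uncountably many pairwise non-cocycle conjugate, strongly outer actions of $G$ on $A\otimes\bigotimes_{n\in\mathbb{N}}\mathcal{Z}$. Since $\mathcal{Z}$ is strongly self-absorbing, $\bigotimes_{n\in\mathbb{N}}\mathcal{Z}\cong\mathcal{Z}$, and since $A$ is $\mathcal{Z}$-stable, $A\otimes\mathcal{Z}\cong A$; composing yields an isomorphism $A\otimes\bigotimes_{n\in\mathbb{N}}\mathcal{Z}\cong A$. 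Transporting the actions along this isomorphism produces uncountably many actions of $G$ on $A$. Cocycle conjugacy (hence its failure), the cardinality of the family, and strong outerness are all invariant under conjugating an action by a fixed isomorphism, so the transported actions remain pairwise non-cocycle conjugate and strongly outer.

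It then remains to check that the resulting actions are pointwise asymptotically inner. The actions furnished by \autoref{thm:UnctblyActs} have the form $\mathrm{id}_A\otimes\beta_{\sigma^G_P,\mathcal{Z}}$, and conjugating by an isomorphism preserves asymptotic innerness of each $\alpha_g$. Moreover $\mathrm{id}_A\otimes\beta_{\sigma^G_P,\mathcal{Z}}(g)$ is asymptotically inner as soon as $\beta_{\sigma^G_P,\mathcal{Z}}(g)$ is, since a norm-continuous path $(w_t)_t$ of unitaries implementing the latter yields the path $(1_A\otimes w_t)_t$ (of multiplier unitaries) for the former. Thus everything reduces to the following claim: for every permutation $\pi$ of a countable set $X$, the induced permutation automorphism of $\bigotimes_{x\in X}\mathcal{Z}$ is pointwise asymptotically inner. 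This covers our situation, since $\beta_{\sigma^G_P,\mathcal{Z}}(g)$ is precisely such an automorphism, with $\pi=\sigma^G_P(g)$.

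To prove the claim I would first treat the building blocks. A transposition of two tensor factors is, up to the identity on the remaining factors, the flip automorphism of $\mathcal{Z}\otimes\mathcal{Z}$; since $\mathcal{Z}$ is strongly self-absorbing its flip is approximately inner (cf.\ \cite{TonWin_strongly_2007}), and using in addition that $\mathcal{Z}$ is $K_1$-injective one can realize it as asymptotically inner via a norm-continuous path of unitaries that may be taken to start at $1$. A finitely supported permutation is a finite product of transpositions, so concatenating and reparametrizing the corresponding paths, all of which begin at the identity, shows that every finitely supported permutation automorphism is asymptotically inner via a path starting at $1$. For a general $\pi$, I would exhaust $X$ by finite sets $S_1\subseteq S_2\subseteq\cdots$ and approximate $\pi$ by finitely supported permutations agreeing with it on each $S_k$; since any fixed element of $\bigotimes_{x\in X}\mathcal{Z}$ is, up to arbitrarily small norm error, supported on finitely many factors, these approximants converge to the permutation automorphism in the pointwise-norm sense required.

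The main obstacle is the last step: assembling the paths attached to the finitely supported approximants into a single norm-continuous path $(w_t)_{t\in[0,\infty)}$ whose conjugations converge to the full permutation automorphism. This is exactly the passage from approximate to asymptotic innerness, and it is delicate precisely because the relevant permutations may have infinite support (the translation action on $G$, and the translation actions on the possibly infinite coset spaces $G_p$). The point on which I would spend the most care is ensuring that, when passing from the implementation adapted to $S_k$ to the one adapted to $S_{k+1}$, the interpolating portion of the path does not spoil the accuracy already achieved on $S_k$; here the facts that each block path starts at the identity and that the strongly self-absorbing structure provides approximately central room are what make a clean telescoping construction of $(w_t)_t$ possible.
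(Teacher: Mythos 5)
Your first paragraph is exactly the paper's argument: apply \autoref{thm:UnctblyActs} with $D=\mathcal{Z}$, then transport the actions along $A\otimes\bigotimes_{n\in\mathbb{N}}\mathcal{Z}\cong A\otimes\mathcal{Z}\cong A$; that part is correct. The gap is in your treatment of asymptotic innerness. The paper disposes of it in one line by quoting the fact that \emph{every} automorphism of $\mathcal{Z}$ is asymptotically inner (a theorem about strongly self-absorbing, $K_1$-injective C*-algebras going back to work of Dadarlat and Winter); since $\mathcal{Z}$ is strongly self-absorbing, $\bigotimes_{x\in X^G_P}\mathcal{Z}\cong\mathcal{Z}$, so this applies verbatim to each automorphism $\beta_{\sigma^G_P,\mathcal{Z}}(g)$, with no need to discuss transpositions or supports at all. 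You instead try to reprove this fact by hand for permutation automorphisms, and your argument breaks exactly where you flag it as delicate: the passage from finitely supported permutations to permutations with infinite support, which is the case actually needed here, since $\sigma^G_P(g)$ translates infinitely many infinite orbits.

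Concretely, the proposed telescoping does not close. Fix a dense sequence $(a_j)_j$ and finitely supported $\rho_k$ with $\rho_k|_{S_k}=\pi|_{S_k}$, and suppose inductively that the path built so far ends at a unitary $w_k$ with $\mathrm{Ad}(w_k)(a_j)\approx\beta_{\rho_k}(a_j)$ for $j\le k$. The correction on $[k,k+1]$ conjugates by a path implementing $\beta_{\rho_{k+1}\rho_k^{-1}}$ (chosen to commute with everything supported on $\pi(S_k)$), so at the endpoint one gets $\mathrm{Ad}(w_{k+1})(a_{k+1})\approx\beta_{\rho_{k+1}\rho_k^{-1}}\bigl(\mathrm{Ad}(w_k)(a_{k+1})\bigr)$. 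For this to be near $\beta_{\rho_{k+1}}(a_{k+1})$ you would need $\mathrm{Ad}(w_k)(a_{k+1})\approx\beta_{\rho_k}(a_{k+1})$, but the inductive hypothesis controls $\mathrm{Ad}(w_k)$ only on $a_1,\dots,a_k$; since conjugation is isometric, an error on the newly introduced element is carried forward forever unless you insert a path that moves $\mathrm{Ad}(w_k)(a_{k+1})$ to $\beta_{\rho_k}(a_{k+1})$ while approximately fixing the images of $a_1,\dots,a_k$. Producing such local-correction paths is not a formal consequence of your building blocks: paths of unitaries implementing nontrivial automorphisms are never approximately central (conjugation by approximately central unitaries tends pointwise to the identity), so the ``approximately central room'' you appeal to cannot supply them. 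This missing step is precisely the content of the approximate-versus-asymptotic uniqueness theorems for strongly self-absorbing algebras; in other words, ``pointwise limit of asymptotically inner automorphisms'' does not formally imply ``asymptotically inner,'' and your sketch ultimately presupposes the theorem it is trying to avoid. The repair is to replace the entire second half by that citation---every automorphism of $\mathcal{Z}\cong\bigotimes_{x\in X^G_P}\mathcal{Z}$ is asymptotically inner---and then tensor the implementing paths with the unit of $M(A)$ exactly as you do.
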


\begin{proof}
This follows immediately from \autoref{thm:UnctblyActs}, by taking $D=%
\mathcal{Z}$. The statement about pointwise asymptotic innerness follows
from the fact that any automorphism of $\mathcal{Z}$ is automatically
asymptotically inner.
\end{proof}

By \autoref{lma:propF}, the corollary above applies to any group containing
a free group, and hence implies Theorem~\ref{thm:C}. Also, by Winter's
theorem \cite{Win_strongly_2011}, the result above applies to any strongly
self-absorbing C*-algebra, thus proving part~(2) of Conjecture~\ref{cnjintro}
for groups containing a free group. This can be regarded as a the
noncommutative analog of Ioana's celebrated result \cite{Ioa_orbit_2011}.

A similar result holds for McDuff von Neumann algebras, thus strengthening a
result of Brothier-Vaes (Theorem~B in~\cite{BroVae_families_2015}) for
groups containing a free group. Even in the case of $\mathcal{R}$, our
methods offer a simpler and shorter proof of their theorem, which avoids
Popa's very advanced theory of spectral gap
rigidity; see \cite{Pop_strong_2003} and \cite{Pop_some_2006}.

\begin{cor}
\label{cor:UnctblyActsMcDuff} Let $G$ be a nonamenable group containing an
infinitely separated subgroup, and let $M$ be a tracial McDuff von Neumann
algebra. Then there exist uncountably many pairwise non-cocycle conjugate,
outer actions of $G$ on $M$.
\end{cor}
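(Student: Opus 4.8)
The plan is to reduce \autoref{cor:UnctblyActsMcDuff} to the already-established \autoref{thm:UnctblyActs}, using the McDuff property to produce the required tensor-factor structure. Recall that $M$ is McDuff precisely when $M \cong M \mathbin{\overline{\otimes}} \mathcal{R}$, and moreover $\mathcal{R} \cong \overline{\bigotimes}_{n \in \mathbb{N}} \mathcal{R}$ with respect to its unique tracial state. Thus $M \cong M \mathbin{\overline{\otimes}} \overline{\bigotimes}_{n \in \mathbb{N}} \mathcal{R}$, which is exactly the form $A \mathbin{\overline{\otimes}} \overline{\bigotimes}_{n \in \mathbb{N}} D$ appearing in \autoref{thm:UnctblyActs} upon setting $A = M$ and $D = \mathcal{R}$, both viewed as tracial von Neumann algebras with $D$ unital.

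The main steps are as follows. First I would invoke the characterization of McDuff factors (or, more precisely, of tracial McDuff von Neumann algebras) to fix an isomorphism $M \cong M \mathbin{\overline{\otimes}} \overline{\bigotimes}_{n \in \mathbb{N}} \mathcal{R}$, transporting the trace accordingly. Second, since $G$ contains an infinitely separated subgroup by hypothesis, I would apply \autoref{thm:UnctblyActs} with $D = \mathcal{R}$ and $A = M$, working in the von Neumann algebra setting throughout. This immediately yields uncountably many pairwise non-cocycle conjugate, strongly outer actions of $G$ on $M \mathbin{\overline{\otimes}} \overline{\bigotimes}_{n \in \mathbb{N}} \mathcal{R} \cong M$. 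Finally, I would observe that for a tracial von Neumann algebra the notion of \emph{strongly outer} reduces to genuine \emph{outerness}: since $M$ carries its faithful normal trace and the weak closure in the relevant GNS representation is just $M$ again, a strongly outer action is in particular outer. This downgrades the conclusion to the outer actions required by the statement.

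The only delicate point is a bookkeeping matter rather than a genuine obstacle: one must ensure that the ambient trace on $M$ is invariant under the constructed actions and that the identification $M \cong M \mathbin{\overline{\otimes}} \overline{\bigotimes}_{n\in\mathbb{N}}\mathcal{R}$ is compatible with the tracial data feeding into \autoref{thm:UnctblyActs}. Because $\mathcal{R}$ has a unique tracial state and the infinite tensor product is taken along that trace, the product trace is automatically preserved by the generalized Bernoulli shifts $\beta_{\sigma^G_P, \mathcal{R}}$, and tensoring with $\mathrm{id}_M$ preserves $\tau_M \otimes \bigotimes_{n} \tau_{\mathcal{R}}$; so this compatibility holds without extra work. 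I expect no serious difficulty here, as all the analytic content has been isolated in \autoref{thm:MainTool2} and \autoref{thm:UnctblyActs}; the corollary is a direct specialization.

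\begin{proof}
Since $M$ is a tracial McDuff von Neumann algebra, we have $M \cong M \mathbin{\overline{\otimes}} \mathcal{R}$, and since $\mathcal{R}$ absorbs itself infinitely with respect to its unique trace, $M \cong M \mathbin{\overline{\otimes}} \overline{\bigotimes}_{n \in \mathbb{N}} \mathcal{R}$. Applying \autoref{thm:UnctblyActs} with $A = M$ and $D = \mathcal{R}$ (both tracial von Neumann algebras, with $D$ unital), we obtain uncountably many pairwise non-cocycle conjugate, strongly outer actions of $G$ on $M \mathbin{\overline{\otimes}} \overline{\bigotimes}_{n \in \mathbb{N}} \mathcal{R} \cong M$. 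Finally, since $M$ is a tracial von Neumann algebra, its weak closure in the GNS representation of the trace is $M$ itself, so strong outerness of these actions coincides with outerness. This yields uncountably many pairwise non-cocycle conjugate outer actions of $G$ on $M$, as desired.
\end{proof}
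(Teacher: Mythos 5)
Your proof is correct and takes essentially the same route as the paper's: the paper's entire proof of this corollary is to apply \autoref{thm:UnctblyActs} with $D=\mathcal{R}$ (and $A=M$), exactly as you do, with the McDuff identification $M\cong M\mathbin{\overline{\otimes}}\overline{\bigotimes}_{n\in\mathbb{N}}\mathcal{R}$ and the reduction of strong outerness to outerness left implicit. Your spelled-out trace bookkeeping and the observation that the weak closure of a tracial von Neumann algebra in its GNS representation is the algebra itself are precisely the details the paper suppresses.
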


\begin{proof}
This follows immediately from \autoref{thm:UnctblyActs}, by taking $D=%
\mathcal{R}$.
\end{proof}

In the measurable setting, Epstein \cite{Eps_orbit_2007} combined Ioana's
result from \cite{Ioa_orbit_2011} with Gaboriau-Lyons' solution \cite%
{GabLyo_measurable_2009} to the von Neumann problem, to show that \emph{any}
nonamenable group admits a continuum of non-orbit equivalent free, ergodic
actions. In order to prove part~(2) of Conjecture~\ref{cnjintro} for all
nonamenable groups, one could attempt a similar approach of inducing actions
from $\mathbb{F}_2$ to any amenable group. For this approach to work,
however, one would need a noncommutative analog of the result of
Gaboriau-Lyons. This suggests the following interesting problem:

\begin{pbm}
Is there an analog of Gaboriau-Lyon's measurable solution to the von Neumann
problem in the context of strongly outer actions on (finite) strongly
self-absorbing C*-algebras? And for outer actions on the hyperfinite II$_1$%
-factor?
\end{pbm}


\begin{thebibliography}{99}
\bibitem{BraEvaKis_rohlin_1995} \textsc{O.~Bratteli, D.~E. Evans, and
A.~Kishimoto}, \emph{{The {R}ohlin property for quasi-free automorphisms of
the {F}ermion algebra}}, Proc. London Math. Soc. (3), 71 (1995),
pp.~675--694.

\bibitem{BroVae_families_2015} \textsc{A.~Brothier and S.~Vaes}, \emph{%
Families of hyperfinite subfactors with the same standard invariant and
prescribed fundamental group}, J. Noncommut. Geom., 9 (2015), pp.~775--796.

\bibitem{BurKec_weak_2016} \textsc{P.~Burton and A.~S. Kechris}, \emph{Weak
containment of measure preserving group actions}, (2016). \newblock %
Preprint, arXiv:1611.07921.

\bibitem{Con_periodic_1976} \textsc{A.~Connes}, \emph{{Periodic
automorphisms of the hyperfinite factor of type {II}$_1$}}, Acta Sci. Math.
(Szeged), 39 (1977), pp.~39--66.

\bibitem{Eps_orbit_2007} \textsc{I.~Epstein}, \emph{Orbit inequivalent
actions of non-amenable groups}, (2007) Preprint, arXiv:0707.4215.


\bibitem{farah_model_2017} \textsc{I.~Farah, B.~Hart, M.~Lupini, L.~Robert,
A.~Tikuisis, A.~Vignati}, and W.~Winter, \emph{Model {theory} of {C}%
*-algebras}, arXiv:1602.08072 (2016).

\bibitem{GabLyo_measurable_2009} \textsc{D.~Gaboriau and R.~Lyons}, \emph{{A
measurable-group-theoretic solution to von {N}eumann's problem}}, Invent.
Math., 177 (2009), pp.~533--540.

\bibitem{GabPop_uncountable_2005} \textsc{D.~Gaboriau and S.~Popa}, \emph{{%
An uncountable family of nonorbit equivalent actions of {$\mathbb{F}_n$}}},
J. Amer. Math. Soc., 18 (2005), pp.~547--559.

\bibitem{GarHir_strongly_2017} \textsc{E.~Gardella and I.~Hirshberg}, \emph{{%
Strongly outer actions of amenable groups on {$\mathcal{Z}$}-stable {C*}%
-algebras}}, (2017). \newblock In preparation.

\bibitem{GarLup_cocycle_2016} \textsc{E.~Gardella and M.~Lupini}, \emph{{%
Actions of rigid groups on {UHF}-algebras}}, J. Funct. Anal., to appear
(2018).

\bibitem{GarLup_borel_2017} \leavevmode\vrule height 2pt depth -1.6pt width
23pt, \emph{The Borel complexity of conjugacy, orbit equivalence, and von
Neumann equivalence of actions of nonamenable groups}, (2017). \newblock %
Preprint, arXiv:1708.01327.

\bibitem{Ioa_orbit_2011} \textsc{A.~Ioana}, \emph{{Orbit inequivalent
actions for groups containing a copy of {$\mathbb{F}_2$}}}, Invent. Math.,
185 (2011), pp.~55--73.


\bibitem{Jon_finite_1980} \textsc{V.~F.~R. Jones}, \emph{{Actions of finite
groups on the hyperfinite type {$\mathrm{II}_{1}$}\ factor}}, Mem. Amer.
Math. Soc., 28 (1980), pp.~v+70.

\bibitem{Jon_converse_1983} \textsc{V.~F.~R. Jones}, \emph{{A converse to {O}%
cneanu's theorem}}, J. Operator Theory, 10 (1983), pp.~61--63.

\bibitem{KecTsa_amenable_2008} \textsc{A.~S. Kechris and T.~Tsankov}, \emph{%
Amenable actions and almost invariant sets}, Proc. Amer. Math. Soc., 136
(2008), pp.~687--697.


\bibitem{Ocn_actions_1985} \textsc{A.~Ocneanu}, \emph{{Actions of discrete
amenable groups on von {N}eumann algebras}}, vol.~1138 of {Lecture Notes in
Mathematics}, Springer-Verlag, Berlin, 1985.


\bibitem{Pop_some_2006} \textsc{S.~Popa}, \emph{{Some rigidity results for
non-commutative {B}ernoulli shifts}}, J. Funct. Anal., 230 (2006),
pp.~273--328.

\bibitem{Pop_strong_2003} \leavevmode\vrule height 2pt depth -1.6pt width
23pt, \emph{Strong rigidity of {$\mathrm{II_1}$} factors arising from
malleable actions of {$w$}-rigid groups. {II}}, Invent. Math., 165 (2006),
pp.~409--451.

\bibitem{Sat_actions_2016} \textsc{Y.~Sato}, \emph{{Actions of amenable
groups and crossed products of {$\mathcal{Z}$}-absorbing {C*}-algebras}},
(2016). \newblock Preprint, arXiv:1612.08529.

\bibitem{Sza_stronglyIII_2016} \textsc{G.~Szabo}, \emph{{Strongly
self-absorbing {C*}-dynamical systems, III}}, Adv. Math. \textbf{316}
(2017), 356--380.

\bibitem{TonWin_strongly_2007} \textsc{A.~S. Toms and W.~Winter}, \emph{{%
Strongly self-absorbing {C*}-algebras}}, Trans. Amer. Math. Soc., 359
(2007), pp.~3999--4029.

\bibitem{Win_strongly_2011} \textsc{W.~Winter}, \emph{{Strongly
self-absorbing {C*}-algebras are {$\mathcal{Z}$}-stable}}, J. Noncommut.
Geom., 5 (2011), pp.~253--264.
\end{thebibliography}

\end{document}